\documentclass[11pt]{article}

\usepackage[T1]{fontenc}
\usepackage{amsmath}
\usepackage{amssymb}
\usepackage{amscd}
\usepackage{amsbsy}
\usepackage{amsfonts}
\usepackage{amsthm}
\usepackage{bbm}
\usepackage{mathtools}
\usepackage{xcolor}
\usepackage{comment}

\input xy
\xyoption{all}

\pagestyle{plain}
\textwidth=17.5cm
\oddsidemargin=-1cm
\evensidemargin=-1cm
\topmargin=-1cm
\textheight=23cm

\newtheorem{thm}{Theorem}[section]
\newtheorem{lem}[thm]{Lemma}
\newtheorem{prop}[thm]{Proposition}
\newtheorem{cor}[thm]{Corollary}
\newtheorem{rem}[thm]{Remark}
\newtheorem{dfn}[thm]{Definition}
\newtheorem{conj}[thm]{Conjecture}

\DeclareMathOperator{\SH}{\mathcal{SH}}
\DeclareMathOperator{\cat}{\mathcal{C}}
\DeclareMathOperator{\giso}{{\mathcal{G}}^{iso}}
\DeclareMathOperator{\uiso}{{\mathcal{U}}^{iso}}
\DeclareMathOperator{\mbp}{\mathrm{MBP}}
\DeclareMathOperator{\hz}{\mathrm{H}{\mathbb Z}/2}
\DeclareMathOperator{\F}{{\mathbb F}_2}
\DeclareMathOperator{\Z}{{\mathbb Z}}
\DeclareMathOperator{\N}{{\mathbb N}}
\DeclareMathOperator{\Q}{{\mathbb Q}}
\DeclareMathOperator{\un}{{\mathbbm 1}}
\DeclareMathOperator{\Fi}{{\mathbb F}}
\DeclareMathOperator{\Pone}{{\mathbb P}^1}
\DeclareMathOperator{\gm}{{\mathbb G}_m}
\DeclareMathOperator{\gmn}{{\mathbb G}_m^{\times n}}

\DeclareMathOperator{\km}{\mathrm{k}}
\DeclareMathOperator{\Mod}{\mathrm{Mod}}

\DeclareMathOperator{\vs}{\mathrm{vec}}
\DeclareMathOperator{\Vs}{\mathrm{Vec}}
\DeclareMathOperator{\gr}{\mathrm{gr}}
\DeclareMathOperator{\spec}{\mathrm{Spec}}
\DeclareMathOperator{\ext}{\mathrm{Ext}}
\DeclareMathOperator{\cof}{\mathrm{cofib}}
\DeclareMathOperator{\ce}{\mathrm{\check{C}}}
\DeclareMathOperator{\colim}{\mathrm{colim}}

\DeclareMathOperator{\perf}{\mathrm{Perf}}

\title{\textsc{Isotropic motivic fundamental groups}}
\author{Fabio Tanania}
\date{}

\begin{document}

\maketitle

 \begin{abstract}\let\thefootnote\relax\footnote{The author acknowledges support (through Timo Richarz) by the European Research Council (ERC) under Horizon Europe (grant agreement n$^\circ$ 101040935), by the Deutsche Forschungsgemeinschaft (DFG, German Research Foundation) TRR 326 \textit{Geometry and Arithmetic of Uniformized Structures}, project number 444845124 and the LOEWE professorship in Algebra, project number LOEWE/4b//519/05/01.002(0004)/87.} 
 The main goal of this paper is to study relative versions of the category of modules over the isotropic motivic Brown-Peterson spectrum, with a particular emphasis on their cellular subcategories. Using techniques developed by Levine, we equip these categories with motivic $t$-structures, whose hearts are Tannakian categories over $\Fi_2$. This allows to define isotropic motivic fundamental groups, and to interpret relative isotropic Tate motives in the heart as their representations. Moreover, we compute these groups in the cases of the punctured projective line and split tori. Finally, we also apply Spitzweck's derived approach to establish an identification between relative isotropic Tate motives and representations of certain affine derived group schemes, whose 0-truncations coincide with the aforementioned isotropic motivic fundamental groups.
 	\end{abstract}
 
 \textbf{Keywords:} Motivic homotopy theory, isotropic motives, motivic fundamental groups, Milnor K-theory, Koszul algebras
 
\section{Introduction}
 
The triangulated category of isotropic motives with $\Fi_p$-coefficients $\mathcal{DM}(k/k,\Fi_p)$ was introduced by Vishik in \cite{V}. In a few words, it is a localization of Voevodsky's triangulated category of motives $\mathcal{DM}(k,\Fi_p)$ \cite{Vo}, obtained by ``annihilating" the motives of all anisotropic varieties mod $p$, that is, varieties whose closed points have degree divisible by $p$. At least when the base field $k$ is flexible, namely a purely transcendental extension of infinite degree over some other field, these categories exhibit an exceptionally nice behaviour. One of their most interesting features is their relation to numerical motives. More precisely, the category of pure isotropic motives is equivalent to the category of numerical motives mod $p$. This was first conjectured in \cite{V}, and then fully proved in \cite{V1}, by Vishik. The upshot is that isotropic motivic cohomology catches information about algebraic cycles modulo numerical equivalence mod $p$, as much as motivic cohomology catches information about Chow groups (modulo rational equivalence). 

Another remarkable characteristic of isotropic motives is their connection to well-known objects coming from topology, and classical homotopy theory. This relation is at the moment only visible for $p=2$, but is expected in general for every prime. The isotropic motivic cohomology of $k$ with $\Fi_2$-coefficients was computed in \cite{V}, and the outcome is an exterior algebra over $\Fi_2$ generated by classes which are, in some sense, dual to Milnor operations. This result provided motivation, in \cite{T1}, to consider a stable homotopic version of $\mathcal{DM}(k/k,\Fi_2)$, and to compute the stable motivic homotopy groups of the isotropic sphere spectrum $\un_k^{iso}$. Via a suitable isotropic Adams spectral sequence, the endomorphism ring of $\un_k^{iso}$ was identified with the Ext-algebra of the classical Steenrod algebra, that is, the $E_2$-page of the classical Adams spectral sequence.

As a follow-up, in \cite{T}, we described the cellular subcategory of the isotropic stable motivic homotopy category $\SH^{iso}(k)$. This was shown to be equivalent to the derived category of comodules over the classical dual Steenrod algebra. A key step in proving this identification was the investigation of the category of cellular modules over the isotropic motivic Brown-Peterson spectrum $\mbp^{iso}_k$, which is simply, quite surprisingly, the category of bigraded $\Fi_2$-vector spaces. Moreover, in \cite{T} the relation between isotropic motivic cohomology and isotropic $\mbp$-cohomology was made explicit. More precisely, the isotropic motivic cohomology of a smooth variety $X$ over $k$ is a free module over the isotropic $\mbp$-cohomology of $X$ generated by the classes appearing in the isotropic motivic cohomology of $k$. This implies, in particular, that algebraic cycles modulo numerical equivalence mod 2 are already visible in the category of $\mbp^{iso}_k$-modules. The part that is erased from $\mathcal{DM}(k/k,\Fi_2)$ is encoded in the isotropic motivic cohomology of the point, which is in some sense ``rigid". This translates into better vanishing properties of the endomorphisms of the unit in $\mbp_{k}^{iso}-\Mod$ than in $\mathcal{DM}(k/k,\Fi_2)$.

In this paper we are ultimately guided by the purpose of understanding isotropic $\mbp$-cohomology, since this theory generalizes cycles modulo numerical equivalence, and also provides the first step in understanding $\SH^{iso}(k)$. Our approach is centered on the study of relative isotropic Tate motives. More precisely, we introduce for every smooth variety $X$ over $k$ the category of $\mbp^{iso}_X$-modules, and for every finitely generated field extension $F/k$ the category of $\mbp_{F/k}$-modules. Then, we focus on their cellular subcategories, which already contain the information we are interested in. In particular, the endomorphism rings of the unit in these categories are respectively the isotropic $\mbp$-cohomology of $X$ and of $F$.\\

\textbf{Main results.}  Let $k=k_0(t_1,t_2,\dots)$ be a flexible field of characteristic different from 2. We denote by $\mbp_{k}^{iso}$ the isotropic localization of the motivic Brown-Peterson spectrum $\mbp$ at the prime 2. Recall from \cite{T} that $\mbp_{k}^{iso}$ is an $E_{\infty}$-ring spectrum in $\SH(k)$. Let $X$ be an isotropic variety over $k$, that is, a variety with a closed point of odd degree. Denote by $\mbp_{X}^{iso}$ the $E_{\infty}$-ring spectrum in $\SH(X)$ defined as $p_X^*(\mbp_k^{iso})$, where $p_X^*:\SH(k) \rightarrow \SH(X)$ is the pullback functor induced by the structure map $p_X:X \rightarrow \spec(k)$. We denote by $\mbp^{iso}_X - \Mod$ the stable $\infty$-category of modules over $\mbp^{iso}_X$, and by $\mbp^{iso}_X - \Mod_{cell}$ its full localizing subcategory generated by $\Sigma^{p,q}\mbp_X^{iso}$ for all $p,q \in \Z$. In our main result, we obtain a description of $\mbp^{iso}_X-\Mod_{cell}^{\omega}$, namely the full subcategory of compact objects of $\mbp^{iso}_X-\Mod_{cell}$.

\begin{thm}[Theorem \ref{main}]
	For every smooth isotropic variety $X$ over $k$, there is a non-degenerate $t$-structure on $\mbp^{iso}_X-\Mod_{cell}^{\omega}$ whose heart $\mbp^{iso}_X-\Mod_{cell}^{\omega,\heartsuit}$ is a neutral Tannakian category with fiber functor given by
	$$\bigoplus_n \gr_n^W:\mbp^{iso}_X-\Mod_{cell}^{\omega,\heartsuit} \rightarrow \Fi_2-\vs.$$
\end{thm}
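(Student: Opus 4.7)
The plan is to transplant Levine's construction of the motivic $t$-structure on mixed Tate motives into the present relative isotropic setting. Since $\mbp^{iso}_X - \Mod_{cell}^{\omega}$ is by definition the category of compact objects in a stable $\infty$-category generated under colimits by the Tate twists $\Sigma^{p,q}\mbp_X^{iso}$, the whole construction ultimately reduces to controlling the bigraded ring of maps between these generators, namely the isotropic $\mbp$-cohomology $\mbp^{iso,*,*}(X)$.

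The first step is to verify the Beilinson-Soul\'e-type vanishing $\mbp^{iso,p,q}(X) = 0$ for $p < 0$. This should be extracted from the results of \cite{T}: the isotropic motivic cohomology of $X$ is free over $\mbp^{iso,*,*}(X)$ on classes coming from $\spec k$, and since isotropic motivic cohomology of a smooth variety vanishes in negative cohomological degrees, the same vanishing is forced on $\mbp^{iso,*,*}(X)$. Granted this, one defines the $t$-structure in Levine's way: the connective part is the smallest full subcategory containing all $\Sigma^{p,q}\mbp_X^{iso}$ with $p \geq 0$ and closed under extensions and finite colimits, dually for the coconnective part. The orthogonality between the two halves is precisely the Ext-vanishing above, and non-degeneracy follows from the generation by Tate twists.

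Next, one endows the heart with its Tannakian structure. The symmetric monoidal product on $\mbp^{iso}_X - \Mod$ restricts to cellular compact objects and, by a check on generators using the same Ext-vanishing, is compatible with the $t$-structure, hence descends to the heart. Rigidity is automatic since each $\Sigma^{p,q}\mbp_X^{iso}$ is invertible. The weight filtration $W_{\bullet}$ on an object of the heart is defined by filtering by the Tate coordinate: $W_n$ is the largest subobject built from generators of weight at most $n$. Its graded pieces are direct sums of pure Tate objects whose endomorphism rings identify with $\Fi_2$, so $\bigoplus_n \gr_n^W$ lands in $\Fi_2 - \vs$ and inherits exactness, faithfulness, and monoidality from standard properties of weight filtrations.

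The principal obstacle is promoting Levine's techniques from the absolute case over $\spec k$, where $\mbp^{iso}_k - \Mod_{cell}$ is already known to be just bigraded $\Fi_2$-vector spaces, to a general smooth isotropic variety $X$, where $\mbp^{iso,*,*}(X)$ is considerably richer. Concretely, one must verify that the weight filtration is functorial and tensor-compatible, and that the fiber functor remains faithful despite the additional morphisms present over $X$. This is precisely where the flexibility of $k$ and the results of \cite{T} should enter crucially.
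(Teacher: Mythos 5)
There is a genuine gap, on two levels. First, the vanishing you verify is not the right one, and not enough of it. Running Levine's machine on $\mbp^{iso}_X-\Mod_{cell}$ requires the full set of Tate-type axioms for the generators: $[\un(n),\Sigma^l\un(m)]\cong\mbp_{iso}^{l,m-n}(X)\cong 0$ for \emph{all} $l\in\Z$ when $m<n$ (negative relative weight), $\mbp_{iso}^{l,0}(X)\cong 0$ for all $l\neq 0$ (including $l>0$), $\mbp_{iso}^{0,0}(X)\cong\Fi_2$, and only then, in addition, the Beilinson--Soul\'e-type vanishing $\mbp_{iso}^{l,q}(X)\cong 0$ for $q>0$, $l\leq 0$. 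Your single statement ``$\mbp_{iso}^{p,q}(X)=0$ for $p<0$'' covers at best the last item; it says nothing about negative weights with $p\geq 0$, nothing about $\mbp_{iso}^{l,0}(X)$ for $l>0$, and nothing about the endomorphism ring of the unit. The paper gets all of these from the stronger vanishing $\mbp_{iso}^{p,q}(X)\cong 0$ for $p<q$, $p>2q$ and $p>q+d$, proved via the Gersten--Quillen (coniveau) spectral sequence together with the identification $\mbp_{iso}^{**}(F)\cong\km^M_*(F/k)$ for finitely generated extensions, i.e.\ the fact that $\mbp^{iso}_k$ lies in the heart of the homotopy $t$-structure and its cohomology of fields is concentrated on the diagonal. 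Moreover $\mbp_{iso}^{0,0}(X)\cong\km^M_0(\kappa(X)/k)\cong\Fi_2$ is exactly where the hypothesis that $X$ is \emph{isotropic} enters (for anisotropic $X$ this ring is $0$ and the statement fails); your argument never uses isotropy and never establishes this identification, so the ``endomorphism rings identify with $\Fi_2$'' step of your weight-graded fiber functor is unsupported.

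Second, even the weak vanishing you do claim is derived from a false premise. Isotropic motivic cohomology does \emph{not} vanish in negative cohomological degrees: $\hz_{iso}^{**}(k)\cong\Lambda_{\Fi_2}(r_i)$ with $r_i\in\hz_{iso}^{-2^{i+1}+1,-2^i+1}(k)$, and the very freeness result from \cite{T} you invoke, $\hz_{iso}^{**}(X)\cong\hz_{iso}^{**}(k)\otimes_{\Fi_2}\mbp_{iso}^{**}(X)$, propagates these negative-degree classes to every smooth $X$. So the deduction ``isotropic motivic cohomology vanishes for $p<0$, hence so does $\mbp_{iso}^{**}(X)$'' collapses; the correct route is the one above through isotropic Milnor K-theory and the coniveau spectral sequence. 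Finally, once the Tate-type axioms and Beilinson--Soul\'e vanishing are in place, you should appeal to Levine's theorem (with $\Fi_2$-coefficients) for the $t$-structure, the weight filtration, exactness and tensor-compatibility of $\bigoplus_n\gr_n^W$, and rigidity of the heart, rather than re-deriving these by hand with an ad hoc ``generated by $\Sigma^{p,q}$ with $p\geq 0$'' truncation: that definition is not the motivic $t$-structure used here, and the tensor-exactness and faithfulness of the fiber functor are precisely the nontrivial content of Levine's result, not ``standard properties of weight filtrations.''
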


The Tannaka group $\giso(X)$ associated with the fiber functor just introduced is an extension of the multiplicative group $\gm$ by a pro-unipotent group $\uiso(X)$ over $\Fi_2$ that we call isotropic motivic fundamental group of $X$. In fact, we get a split short exact sequence of pro-algebraic groups over $\Fi_2$
$$1 \rightarrow \uiso(X) \rightarrow \giso(X) \rightarrow \gm \rightarrow 1$$
that allows to interpret objects in the heart of the motivic $t$-structure on isotropic Tate motives over $X$ as graded representations of $\uiso(X)$ with values in $\Fi_2$-vector spaces. We point out that a similar result is also available for any isotropic finitely generated field extension $F/k$, that is, a function field of an isotropic smooth variety. This, in turn, produces an isotropic motivic Galois group ${\mathcal{U}}(F/k)$, and the heart of the motivic $t$-structure aforementioned consists of its graded representations.

The techniques we use to prove our main theorem are inspired by the work of Levine on motivic fundamental groups developed in \cite{L}, and later in \cite{L1}. In those papers, the object of study was the category of Tate motives over some base $X$ with coefficients in $\Q$. With the assumption of the Beilinson-Soul\'e vanishing conjecture, this category was equipped with a motivic $t$-structure whose heart is a Tannakian category over $\Q$. It would be tempting to try to apply these techniques directly to isotropic Tate motives in $\mathcal{DM}(k/k,\Fi_2)$. Unfortunately, the needed vanishing conditions are not met, and this is one of the main reasons why we turn our attention to cellular isotropic $\mbp$-modules. Indeed, this category, not only meets the standard vanishing conditions of a category of Tate type, but also satisfies the Beilinson-Soul\'e vanishing conjecture.

At this point, we can wonder whether $\mbp^{iso}_X-\Mod_{cell}^{\omega}$ and $\mbp_{F/k}-\Mod_{cell}^{\omega}$ are equivalent to the derived categories of their hearts. In the first situation, we know that it cannot be always the case, since the isotropic $\mbp$-cohomology of a variety spreads in general above the diagonal (i.e. the subring consisting of cohomology groups whose topological degree and motivic weight coincide). But for finitely generated field extensions, this does not happen. Indeed, we know that the isotropic $\mbp$-cohomology of a field extension $F/k$ is concentrated on the diagonal, and can be described as a specific quotient of Milnor K-theory that we call isotropic Milnor K-theory, and denote by $\km^M_*(F/k)$. In this situation it is clear from the work of Positselski \cite{P} that $\mbp_{F/k}-\Mod_{cell}^{\omega}$ is equivalent to the derived category of the heart of the motivic $t$-structure if and only if the isotropic Milnor K-ring of $F/k$ is a Koszul algebra. 

\begin{conj}[Conjecture \ref{kos}]
	For every finitely generated field extension $F/k$, the isotropic Milnor $K$-ring $\km^M_*(F/k)$ is a Koszul $\Fi_2$-algebra.
\end{conj}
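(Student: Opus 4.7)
The plan is to approach the Koszul property of $\km^M_*(F/k)$ in two stages: first establish that this algebra admits a purely quadratic presentation, and then verify the Koszul property for that presentation. The starting point is the classical mod-$2$ Milnor $K$-ring $K^M_*(F)/2$, which is quadratic with Steinberg relations $\{a\}\cdot\{1-a\}=0$ and which, thanks to Voevodsky's proof of the Milnor conjecture, is known to be a Koszul $\Fi_2$-algebra by work of Positselski. Since $\km^M_*(F/k)$ arises as a quotient of $K^M_*(F)/2$ by further relations coming from isotropic localization, the task reduces to identifying and controlling these additional relations. In the basic case $F=k$, Vishik's computation of isotropic motivic cohomology of the point exhibits $\km^M_*(k/k)$ as an exterior algebra over $\Fi_2$, which is trivially Koszul, providing both a sanity check and a template for the general case.

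The first concrete step is to exhibit an ideal $I\subset K^M_*(F)/2$ with $\km^M_*(F/k)=(K^M_*(F)/2)/I$, and to prove that $I$ is generated in degrees $\leq 2$. Natural candidates for the degree-$1$ generators are symbols $\{a\}$ corresponding to elements $a\in F^\times$ becoming squares over some degree-$2$ anisotropic extension of $F/k$, together with degree-$2$ classes corresponding to Pfister forms whose function fields are anisotropic. Once quadraticity is in hand, I would attempt Koszulity by constructing a PBW-type basis: the flexibility of $k$ permits a convenient linear ordering of generators of $F^\times/(F^\times)^2$ (using the transcendental variables $t_i$) under which the isotropic relations can hopefully be realised as leading terms of a Gr\"obner-style basis. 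Alternative routes are to exhibit a Koszul filtration in the sense of Conca-Trung-Valla, or to verify the Backelin-Fr\"oberg distributivity criterion on the lattice of subspaces of relations in each degree.

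The main obstacle will lie in the first stage: showing that the defining ideal of $\km^M_*(F/k)$ in $K^M_*(F)/2$ is generated in degrees $\leq 2$ is itself a geometric statement of the same flavour as the Milnor conjecture, and will likely require substantial input from isotropic motivic cohomology together with fine control over restrictions to anisotropic quadrics. Even granting quadraticity, the Koszul property of a quadratic quotient of a Koszul algebra is not automatic, so the new relations will need to interact with the Steinberg relations in a compatible way; in the Tannakian language of Theorem \ref{main}, this compatibility would translate to a splitting of certain natural extensions between simple objects in the heart $\mbp_{F/k}-\Mod_{cell}^{\omega,\heartsuit}$. A potentially more tractable reformulation is to use precisely that identification to rephrase Koszulity as the vanishing of certain higher $\ext$ groups in specific bidegrees, which could then be attacked directly via isotropic techniques over the flexible base $k$.
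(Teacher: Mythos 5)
The statement you are trying to prove is stated in the paper as an open conjecture (Conjecture \ref{kos}): the paper does not prove it, and your text does not either, so there is a genuine gap --- in fact your second stage is only a list of possible strategies (PBW/Gr\"obner bases, Koszul filtrations, Backelin--Fr\"oberg distributivity) with no argument carried out for any of them. What the paper actually establishes is much weaker and different in structure: (i) quadraticity of $\km^M_*(F/k)$ (Proposition \ref{struc}), which follows from the theorem of Orlov--Vishik--Voevodsky \cite{OVV} that the ideal $I_k\subset\km^M_*(F)$ is generated by its degree-one part --- note this is generation in degree $1$, not your proposed degree-$\leq 2$ generators involving Pfister classes; and (ii) verification of the conjecture only for finite extensions and for $k(x)/k$ (Proposition \ref{vanfe} and Corollary \ref{pte}), via the identification $\mbp_{iso}^{**}(F)\cong\km^M_*(F/k)$ of Proposition \ref{imk}, base change along odd-degree extensions, and Rost's homotopy property. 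Your concluding reformulation of Koszulity as vanishing of higher $\ext$ groups between objects of the heart is essentially the content of Theorem \ref{kosz} (via Positselski \cite{P}), but restating the problem is not progress toward solving it.

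Two of your starting assertions are also factually wrong. First, Koszulity of $K^M_*(F)/2$ for an arbitrary field $F$ is \emph{not} known: it is itself a conjecture of Positselski, proved in \cite{PV} only for special fields such as $\Q$ and finite fields; Voevodsky's proof of the Milnor conjecture gives the Bloch--Kato isomorphism with Galois cohomology, not Koszulity. So you cannot take ``the Milnor ring is Koszul'' as input, and any strategy of descending Koszulity from $\km^M_*(F)$ to the quotient $\km^M_*(F/k)$ would in any case face the problem that Koszulity is not inherited by quadratic quotients, as you yourself note. Second, $\km^M_*(k/k)$ is not the exterior algebra computed by Vishik: that exterior algebra $\Lambda_{\Fi_2}(r_i)$ is the isotropic motivic cohomology $\hz_{iso}^{**}(k)$ of the point, whereas $\km^M_*(k/k)\cong\Fi_2$ concentrated in degree $0$ (Proposition \ref{vanfe}); the sanity check happens to pass, but the template you extract from it does not exist.
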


We make this conjecture, motivated by the fact that, independently on the base flexible field, it holds for all finite extensions and purely transcendental extensions of degree one. Moreover, a similar Koszulity conjecture for the Milnor K-ring mod $p$ of a field was already proposed, and proved in some cases, by Positselski and Vishik in \cite{PV}.

Next, we perform some explicit computations in the cases of the punctured projective line and of split tori. More precisely, let $S$ be a non-empty finite subset of $k$-rational points of $\Pone$ of cardinality $d+1$, and denote by $T^c(V)$ the tensor coalgebra of a $d$-dimensional $\Fi_2$-vector space $V$. Also denote by $\Gamma(W)$ the divided power algebra of an $n$-dimensional $\Fi_2$-vector space $W$. Then, we obtain the following result.

\begin{thm}[Theorems \ref{pones} and \ref{uisogmn}]
	There are isomorphisms of pro-algebraic groups over $\Fi_2$
	$$\uiso(\Pone \setminus S) \cong \spec(T^c(V)), \: \: \: \uiso(\gmn) \cong \spec(\Gamma(W))$$
	that induce, respectively, $t$-exact equivalences of stable $\infty$-categories
	$$\mbp^{iso}_{\Pone \setminus S}-\Mod_{cell}^{\omega} \simeq {\mathcal{D}}^b({\mathrm{gr.rep}}_{\Fi_2}(\spec(T^c(V)))), \: \: \: 
	\mbp^{iso}_{\gmn}-\Mod_{cell}^{\omega} \simeq {\mathcal{D}}^b({\mathrm{gr.rep}}_{\Fi_2}(\spec(\Gamma(W)))).$$
\end{thm}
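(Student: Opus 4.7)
By Theorem \ref{main}, the group $\uiso(X)$ is the pro-unipotent kernel of the Tannaka group of $\mbp^{iso}_X-\Mod_{cell}^{\omega,\heartsuit}$ attached to the fibre functor $\bigoplus_n \gr_n^W$. Its coordinate ring $\mathcal{O}(\uiso(X))$ is a graded commutative Hopf algebra over $\Fi_2$ (via the $\gm$-action) which, by standard Tannakian/Koszul-theoretic reconstruction, is controlled by the $\ext$-algebra between Tate objects in the heart; in our setting the latter is detected by the isotropic $\mbp$-cohomology of $X$. The strategy is therefore to compute $\mbp^{*,*,iso}(X)$ in each case, recognise it as a Koszul $\Fi_2$-algebra, identify its Koszul-dual coalgebra with $\mathcal{O}(\uiso(X))$, and finally invoke Koszulity to upgrade this heart-level description to the promised derived equivalence.

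For $X = \Pone \setminus S$ with $|S| = d+1$, I would feed the localisation triangle associated to $S \hookrightarrow \Pone$ into $\mbp^{iso}_X-\Mod_{cell}^{\omega}$ and use the fact (a fundamental input from \cite{T}) that the isotropic $\mbp$-cohomology of the flexible base field reduces to $\Fi_2$ concentrated in bidegree $(0,0)$. The resulting algebra has positive part concentrated in bidegree $(1,1)$, spanned by the classes of the points of $S$ modulo the Gysin class coming from $\Pone$, giving a $d$-dimensional space $V$. All higher products vanish for dimensional reasons, since $\Pone \setminus S$ is a smooth affine curve. The trivial quadratic algebra $\Fi_2 \oplus V$ is tautologically Koszul, with Koszul dual the free associative algebra $T(V^*)$, and the corresponding commutative graded Hopf algebra is the tensor coalgebra $T^c(V)$ with shuffle product, yielding $\uiso(\Pone \setminus S) \cong \spec(T^c(V))$.

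For $X = \gmn$, I would combine the stable cellular splitting $(\gm)_+ \simeq \un \vee \Sigma^{1,1}\un$ in $\SH(k)$ with a K\"unneth argument (again enabled by the triviality of the isotropic $\mbp$-cohomology of $k$) to identify $\mbp^{*,*,iso}(\gmn)$ with the exterior algebra $\Lambda(W)$ on an $n$-dimensional $\Fi_2$-vector space $W$ placed in bidegree $(1,1)$. The exterior algebra is classically Koszul, and its Koszul-dual, at the level of graded commutative Hopf algebras, is the divided power algebra $\Gamma(W)$ (the characteristic-free form of the duality between $\Lambda$ and $\Gamma$). Equivalently, $\uiso(\gmn)$ has abelian Lie algebra, as befits a commutative ambient group, and one concludes $\uiso(\gmn) \cong \spec(\Gamma(W))$. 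In both cases, Koszulity of the $\ext$-algebra allows one to invoke the Positselski-style machinery already used in the paper (after \cite{P}) to promote the equivalence of hearts to a $t$-exact equivalence $\mbp^{iso}_X-\Mod_{cell}^{\omega} \simeq \mathcal{D}^b({\mathrm gr.rep}_{\Fi_2}(-))$.

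The main obstacle will be to track the \emph{Hopf} (rather than merely algebra) structure through the Tannakian reconstruction and to match it with the intrinsic shuffle, resp.\ divided-power, comultiplications on $T^c(V)$ and $\Gamma(W)$. This requires the symmetric monoidal structure on the heart to interact cleanly with the K\"unneth and localisation computations above, and is ultimately underpinned by the Koszulity inputs. Once these identifications are in place, the derived equivalences follow formally from the Koszul-to-derived comparison already established in the paper for field extensions.
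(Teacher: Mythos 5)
Your proposal follows essentially the same route as the paper: compute $\mbp_{iso}^{**}(\Pone\setminus S)$ via the Gysin triangle and $\mbp_{iso}^{**}(\gmn)$ via the splitting of $\Sigma^\infty_+\gm\wedge\mbp^{iso}_k$, recognise the trivial quadratic algebra and the exterior algebra as Koszul, identify the coordinate coalgebra of $\uiso$ with the Koszul dual coalgebra ($T^c(V)$, resp.\ $\Gamma(W)$), and invoke the Positselski-style Theorem \ref{kosz} for the $t$-exact derived equivalence. The one step you flag but leave open, upgrading the coalgebra identification to an isomorphism of Hopf algebras, is exactly what the paper settles by Remark \ref{um} (Sweedler's uniqueness of a bialgebra structure on a strictly graded coalgebra), so no genuinely different argument or gap is involved.
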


Finally, we point out that Spitzweck \cite{S} generalized Levine's techniques by removing the restrictions on the coefficients and on the Beilinson-Soul\'e vanishing conjecture, and by considering an affine derived group scheme and a suitable category of representations, which is identified then with the category of Tate motives. Of course, once one considers $\Q$-coefficients and assumes Beilinson-Soul\'e vanishing conjecture, then the motivic $t$-structure on Tate motives introduced by Levine is available, and the 0-truncation of Spitzweck's derived group coincides with Levine's motivic fundamental group. Guided by these considerations, we produce for any smooth isotropic variety $X$ (isotropic finitely generated field extension $F/k$) an affine derived group scheme $B^{\bullet,iso}_X$ ($B^{\bullet}_{F/k}$) over $\Fi_2$, and identify its category of representations with the respective category of relative isotropic Tate motives. 

\begin{thm}[Theorem \ref{der}]
	For every smooth isotropic variety $X$ over $k$, there is a symmetric monoidal equivalence
	$$\mbp_X^{iso}-\Mod_{cell}^{\omega} \simeq \perf(B^{iso,\bullet}_X).$$ 
\end{thm}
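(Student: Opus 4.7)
The plan is to import Spitzweck's derived Tannakian reconstruction from \cite{S} into our setting. Spitzweck's framework takes as input a stable symmetric monoidal $\infty$-category of Tate type together with a symmetric monoidal fiber functor to chain complexes over a field, and produces an affine derived group scheme whose perfect complexes of representations recover the original category — crucially, without any Beilinson-Soulé vanishing assumption. My task is to show that $\mbp_X^{iso}-\Mod_{cell}^{\omega}$ fits into this framework and to identify the resulting derived group scheme with $B^{iso,\bullet}_X$.

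The argument I would carry out proceeds in three steps. First, I would construct a symmetric monoidal fiber functor $\omega_X : \mbp_X^{iso}-\Mod_{cell}^{\omega} \to \perf(\Fi_2)$ by upgrading the associated-graded functor $\bigoplus_n \gr_n^W$ of Theorem \ref{main} to a derived enhancement; on the Tate generators $\Sigma^{p,q}\mbp_X^{iso}$, it simply records the bigrading. Second, I would verify the input axioms of Spitzweck's machine: stability and symmetric monoidality are immediate, compact generation by the dualizable objects $\Sigma^{p,q}\mbp_X^{iso}$ is built into the definition of the cellular subcategory, and conservativity of $\omega_X$, together with the boundedness of the mapping spectra required for the cobar construction to converge, follows from the explicit description of isotropic MBP-cohomology in \cite{T}. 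Third, with Spitzweck's hypotheses in place, $B^{iso,\bullet}_X$ is defined as the affine derived group scheme associated with $\omega_X$ via the cobar/coendomorphism construction — at the level of functions, this is a cosimplicial commutative $\Fi_2$-algebra computing the derived tensor coendomorphisms of $\omega_X$ — and Spitzweck's main reconstruction theorem delivers the symmetric monoidal equivalence $\mbp_X^{iso}-\Mod_{cell}^{\omega} \simeq \perf(B^{iso,\bullet}_X)$. Applying $\pi_0$ to the cobar construction and comparing with Theorem \ref{main} then matches the $0$-truncation of $B^{iso,\bullet}_X$ with $\giso(X)$, giving coherence with the rest of the paper.

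The main obstacle is verifying that $\omega_X$ is a genuine symmetric monoidal, conservative fiber functor at the derived level rather than only on the heart. Concretely, this amounts to controlling the derived tensor products of the generators $\Sigma^{p,q}\mbp_X^{iso}$ and showing that $\omega_X$ preserves them up to coherent equivalence with trivial kernel, which I would address using the fine structure of isotropic MBP-cohomology from \cite{T}. A secondary obstacle is to ensure that the cobar construction outputs a truly affine derived group scheme, i.e. that it lives in the connective/pro-unipotent regime to which Spitzweck's equivalence applies; this again reduces to the favourable vanishing features specific to the isotropic setting, which were the very motivation for replacing ${\mathcal DM}(k/k,\Fi_2)$ by isotropic $\mbp$-modules.
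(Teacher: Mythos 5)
Your plan treats the machinery of \cite{S} as a black-box ``derived Tannakian reconstruction from a fiber functor,'' but that is not what the cited results provide, and this is where the real gap lies. The statement actually used (Spitzweck's Theorem 7.4) takes as input an \emph{augmented coconnective $E_\infty$-algebra of bounded Tate type} $A$ in (bi)graded complexes and outputs an equivalence $\perf(A)\simeq\perf(\check{\mathrm{C}}^{\bullet}(A\to\Fi_2))$ with perfect representations of the \v{C}ech conerve of the augmentation. So before it can be invoked one must first identify $\mbp^{iso}_X-\Mod_{cell}$ with $\Mod(A)$ for an explicit such $A$, and this identification is the substantive step: in the paper it is Proposition \ref{bbl}, proved by applying the symmetric monoidal Barr--Beck--Lurie theorem \cite[Proposition 5.29]{MNN} to the cellularized pushforward $\overline{p}_{X,*}$, after checking the projection formula and conservativity, and crucially using $\mbp^{iso}_k-\Mod_{cell}\simeq\Fi_2-\Vs_{**}$ from \cite[Theorem 7.4]{T}; the algebra is $A^{iso}_X=\overline{p}_{X,*}\un$, whose homotopy is $\mbp^{**}_{iso}(X)$, and Lemma \ref{btt} (via the vanishing of Proposition \ref{van}) verifies coconnectivity and bounded Tate type so that Spitzweck's theorem applies. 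Your proposal replaces this monadic descent by the assertion that a derived enhancement of $\bigoplus_n\gr_n^W$ is a fiber functor to which a reconstruction theorem applies; conservativity and boundedness, which you do mention, are not enough --- what is missing is precisely the (co)monadicity/descent argument (colimit preservation, projection formula, convergence of the cobar resolution on all compact objects), i.e.\ the content of Proposition \ref{bbl}, or some substitute for it.

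A second, related issue: even granting a reconstruction theorem for your $\omega_X$, the derived group scheme it would produce is the coendomorphism object of the weight-graded functor, whereas $B^{iso,\bullet}_X$ in the statement is by definition the \v{C}ech conerve of the augmentation $A^{iso}_X\to\Fi_2$; the fiber functor implicit in the paper's construction is base change $-\otimes_{A^{iso}_X}\Fi_2$, not the associated graded of the weight filtration. These agree on the heart and one expects them to agree at the derived level, but that comparison is an extra argument your outline does not supply, so as written you would at best obtain an equivalence with a possibly different cosimplicial group scheme. Note also that in the paper the symmetric monoidality of the final equivalence comes for free from the monoidal Barr--Beck--Lurie statement, whereas upgrading $\bigoplus_n\gr_n^W$ to a symmetric monoidal exact $\infty$-functor is itself a nontrivial task that your proposal only gestures at.
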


The 0-truncation of the derived group $B^{\bullet,iso}_X$ ($B^{\bullet}_{F/k}$) gives back the aforementioned isotropic motivic fundamental group (isotropic motivic Galois group). Then, we can see the Koszulity conjecture on isotropic Milnor K-theory in a different light: in fact, it reduces to the property of $B^{\bullet}_{F/k}$ of being already 0-truncated.\\

\textbf{Outline.} In Section 2, we fix notations that we repeatedly use in this paper. Section 3 recalls results from \cite{L} in a slightly different fashion that will be suitable to deal with isotropic Tate motives. We also summarize some facts about Koszul algebras and Koszul duality. In Section 4, we introduce the main objects of interest of this paper, namely the relative categories of isotropic Tate motives. Then, in Section 5, we introduce isotropic Milnor K-theory and relate it to isotropic $\mbp$-cohomology of a finitely generated field extension. Section 6 describes the picture for isotropic Tate motives over finite extensions, which allows to prove the Koszulity conjecture in a few cases. Section 7 proves the main theorem and introduces isotropic motivic fundamental groups, while in Section 8 we compute the examples of the punctured projective line and of split tori. Finally, in Section 9 we apply Spitzweck's derived approch to the categories of isotropic Tate motives by producing affine derived group schemes whose 0-truncations are the isotropic fundamental groups previously defined.\\

\textbf{Acknowledgements.} I would like to thank Timo Richarz and Alexander Vishik for useful comments that helped to improve the exposition and fix some mistakes. I am also grateful to Tom Bachmann, Rizacan Ciloglu, Konstantin Jakob and Can Yaylali for helpful conversations on the topic of this article. Finally, I thank the referee for the valuable comments, which have contributed to improving the paper.

\section{Notation}

\begin{tabular}{c|c}
	$\cat$ & symmetric monoidal stable $\infty$-category\\
	$\un$ & unit object in $\cat$\\
	$\Fi$ & field of coefficients\\
	$\Fi-\vs$ & category of finite dimensional $\Fi$-vector spaces\\
	${\mathrm{rep}}_{\Fi}(G)$ & category of finite-dimensional representations of an affine algebraic group $G$\\
	${\mathrm{gr.rep}}_{\Fi}(G)$ & category of finite-dimensional graded representations of an affine algebraic group $G$\\
	${\mathrm{Rep}}_{\Fi}(G)$ & category of representations of an affine algebraic group $G$\\
	${\mathcal{D}}^b({\mathcal{A}})$ & bounded derived category of an abelian category $\mathcal{A}$\\
	$k$ & flexible field with ${\mathrm{char}}(k) \neq 2$\\
	$\SH(k)$ & stable motivic homotopy category over $k$\\
	$\un_k^{iso}$ & isotropic sphere spectrum\\
	$\SH^{iso}(k)$ & isotropic stable motivic homotopy category over $k$\\
	$\mbp$ & motivic Brown-Peterson spectrum at the prime $2$\\
	$\mbp^{iso}_X$ & isotropic $\mbp$-spectrum over a smooth variety $X$\\
	$\mbp_{iso}^{**}(X)$ & isotropic $\mbp$-cohomology of a smooth variety $X$\\
	$\hz$ & motivic Eilenberg-MacLane spectrum with $\Z/2$-coefficients\\
	$\hz_{iso}^{**}(X)$ & isotropic motivic cohomology with $\Z/2$-coefficients of a smooth variety $X$\\
	$\un_{F/k}$ & isotropic sphere spectrum of a field extension $F/k$\\
	$\SH(F/k)$ & isotropic stable motivic homotopy category of a field extension $F/k$\\
	$\mbp_{F/k}$ & isotropic $\mbp$-spectrum over a field extension $F/k$\\
	$\km^M_*(F)$ & Milnor K-theory mod 2 of a field $F$\\
	$\km^M_*(F/k)$ & isotropic Milnor K-theory of a field extension $F/k$\\
	${\underline \pi}_n(E)_m$ & motivic homotopy sheaf of a motivic spectrum $E$\\
	$\Fi_2-\Vs_{**}$ & stable $\infty$-category of bigraded $\Fi_2$-vector spaces\\
	$\uiso(X)$ & isotropic motivic fundamental group of a smooth variety $X$\\
	${\mathcal{U}}(F/k)$ & isotropic motivic Galois group of a field extension $F/k$\\
	\end{tabular}\\

We denote by $[-,-]_{\cat}$ the hom-sets in the homotopy category of $\cat$, namely
$$[-,-]_{\cat} \coloneqq {\mathrm{Hom}}_{{\mathrm{Ho}}(\cat)}(-,-)=\pi_0{\mathrm{Map}}_{\cat}(-,-).$$

The bigraded suspension of a motivic spectrum $E$ is denoted by $\Sigma^{p,q}E$. We also use the notation $E(q)$ for the Tate twist $\Sigma^{0,q}E$.

For any motivic $E_{\infty}$-ring spectrum $E$, the stable $\infty$-category of $E$-modules is denoted by $E-\Mod$, while the stable $\infty$-category of cellular $E$-modules, that is, the full localizing subcategory of $E-\Mod$ generated by $\Sigma^{p,q}E$ for all $p, q \in \Z$, is denoted by $E-\Mod_{cell}$.

By abuse of notation, we sometimes call isotropic Tate motives over $X$ the objects in $\mbp_X^{iso}-\Mod_{cell}$ (see Section 4 for more details).

\section{Levine's categories of Tate type}

In this section, we start by recalling definitions and results from \cite{L} about categories of Tate type. Although originally stated for $\Q$-coefficients, all the results in \cite[Section 1]{L} work with coefficients in any field $\Fi$. Since we will need this slightly more general setting, we report here the main statements from \cite[Section 1]{L} by replacing $\Q$ with $\Fi$. All the proofs work the same way.

\begin{dfn}[3, Definition 1.1]\label{lctt}
\normalfont
    A symmetric monoidal $\Fi$-linear stable $\infty$-category $\cat$ is of Tate type if it is compactly generated by objects $\un(n)$ for $n \in \Z$, where $\un(0) \coloneqq \un$, $\un(1)$ is tensor-invertible, and $\un(n) \coloneqq \un(1)^{\otimes n}$, satisfying the following properties:

    1) $[\un(n),\Sigma^l\un(m)]_{\cat}\cong 0$ for all $l \in \Z$ and $n > m$;
    
    2) $[\un(n),\Sigma^l\un(n)]_{\cat}\cong 0$ for $l \neq 0$ and all $n \in \Z$;
    
    3) $[\un(n),\un(n)]_{\cat}\cong \Fi$ for all $n \in \Z$.
\end{dfn}

\begin{rem}
\normalfont
Let $\cat^{\omega}$ be the full subcategory of compact objects of $\cat$. Denote by $\cat^{\omega}_{[a,b]}$ the full stable subcategory of $\cat^{\omega}$ generated by $\un(n)$ for $a \leq -n \leq b$, where $a$ is allowed to be $-\infty$ and $b$ is allowed to be $+\infty$. Note that $\cat^{\omega}_a \coloneqq \cat^{\omega}_{[a,a]}$ is equivalent to the category of graded finite-dimensional $\Fi$-vector spaces. 
\end{rem} 

\begin{thm}[3, Lemma 1.2 and Definition 1.3]
    The pair $(\cat^{\omega}_{(-\infty,b]},\cat^{\omega}_{[b,+\infty)})$ defines a $t$-structure on $\cat^{\omega}$ with truncation functors $W^{\leq b}:\cat^{\omega} \rightarrow \cat^{\omega}_{(-\infty,b]}$ and $W^{\geq b}:\cat^{\omega} \rightarrow \cat^{\omega}_{[b,+\infty)}$.
\end{thm}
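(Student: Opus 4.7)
The plan is to verify the three defining axioms of a $t$-structure on $\cat^{\omega}$: the inclusions $\cat^{\omega}_{(-\infty,b-1]}\subseteq\cat^{\omega}_{(-\infty,b]}$ (and dually on the positive side), which are automatic, Hom-orthogonality between $\cat^{\omega}_{(-\infty,b-1]}$ and $\cat^{\omega}_{[b,+\infty)}$, and the existence of truncation triangles $W^{\leq b-1}X\to X\to W^{\geq b}X$ for every compact object $X$. Both subcategories are stable by definition, so closure under the relevant suspensions and cofibers is built in.

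For Hom-orthogonality, suppose $X\in\cat^{\omega}_{(-\infty,b-1]}$ and $Y\in\cat^{\omega}_{[b,+\infty)}$. By construction $X$ lies in the stable subcategory generated by the $\un(n)$ with $n\geq -b+1$ and $Y$ in the one generated by the $\un(m)$ with $m\leq -b$, so every pair of generators satisfies $n>m$. Axiom $(1)$ of Definition \ref{lctt} then gives $[\un(n),\Sigma^{l}\un(m)]_{\cat}=0$ for all $l\in\Z$ in this regime, and a routine d\'evissage on the cofiber sequences building $X$ and $Y$ propagates the vanishing to $[X,Y]_{\cat}=0$.

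The functorial construction of the truncation triangles is the main obstacle. I would argue by induction on the minimal number of cells needed to express $X\in\cat^{\omega}$ as an iterated cofiber of shifts of the $\un(n)$'s. The base case $X=\Sigma^{l}\un(n)$ is immediate: according to whether $-n\leq b-1$ or $-n\geq b$, one of the two truncations is $X$ itself and the other vanishes. For the inductive step, choose a presentation $X=\mathrm{cofib}(f:Y\to Z)$ with $Y,Z$ of strictly smaller complexity. The induction hypothesis yields truncation triangles for $Y$ and $Z$, and the orthogonality established above guarantees that $f$ lifts, uniquely up to contractible choice, to a morphism between these triangles. An octahedral / $3\times 3$ argument then extracts the desired triangle for $X$, with $W^{\leq b-1}X$ and $W^{\geq b}X$ realised as the cofibers of the induced maps on the two halves.

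The same uniqueness furnished by orthogonality upgrades the pointwise assignments to functors, which are then characterised as right and left adjoints to the inclusions of $\cat^{\omega}_{(-\infty,b]}$ and $\cat^{\omega}_{[b,+\infty)}$ respectively, yielding the claimed truncation functors $W^{\leq b}$ and $W^{\geq b}$.
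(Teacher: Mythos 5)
Your strategy coincides with the proof the paper actually relies on: the paper gives no independent argument for this statement but defers to Levine's proof of \cite[Lemma 1.2]{L}, which is precisely this d\'evissage. Your orthogonality step is right (generators $\un(n)$ with $n\geq 1-b$ against $\un(m)$ with $m\leq -b$ force $n>m$, so axiom (1) of Definition \ref{lctt} applies, and the vanishing propagates along the cell structures), the uniqueness of the lift of $f$ to a map of truncation triangles is indeed contractible because both subcategories are closed under all shifts, and the octahedral/$3\times 3$ step together with the adjoint-functor characterisation of $W^{\leq b}$ and $W^{\geq b}$ is exactly how the functoriality is obtained. Reading the statement as asserting semiorthogonality of $\cat^{\omega}_{(-\infty,b-1]}$ against $\cat^{\omega}_{[b,+\infty)}$, with triangles $W^{\leq b-1}X \to X \to W^{\geq b}X$, is also the intended interpretation, since both halves are stable under all suspensions.

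The one genuine gap is the scope of your induction. You induct on the minimal number of cells needed to write $X\in\cat^{\omega}$ as an iterated cofiber of shifts of the $\un(n)$, but $\cat^{\omega}$ is the category of compact objects of a compactly generated stable $\infty$-category, i.e.\ the \emph{thick} (idempotent-complete) closure of the generators: a general compact object is a priori only a retract of such an iterated cofiber, so your induction never reaches it, and the theorem as stated concerns all of $\cat^{\omega}$. The cleanest repair is to note that your argument establishes the decomposition, equivalently the existence of a right adjoint to the inclusion of $\cat^{\omega}_{(-\infty,b-1]}$ with cofiber of the counit landing in $\cat^{\omega}_{[b,+\infty)}$, on the stable subcategory generated by the $\un(n)$ without retracts; one then passes to idempotent completions, using that idempotent completion is a (2-)functor on stable $\infty$-categories preserving exact functors, adjunctions and cofiber sequences, and that $\cat^{\omega}$ is the idempotent completion of that subcategory (with the thick closures of the two halves as the new aisle and coaisle). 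Alternatively, one can build the decomposition on all of $\cat$ first, where idempotents split, and check that the truncation functors preserve compact objects. Either way, a sentence addressing retracts is needed; otherwise the argument is the standard one and correct.
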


Let $\gr_a^WM$ be the object $W^{\geq a}W^{\leq a}M$ in $\cat^{\omega}_a$. Denote by $\cat^{\omega,\leq 0}$ the full subcategory of $\cat^{\omega}$ with objects $M$ such that $\gr_n^W M$ is cohomologically concentrated in non-negative degrees for all $n\in \Z$, and by $\cat^{\omega,\geq 0}$ the full subcategory of $\cat^{\omega}$ with objects $M$ such that $\gr_n^W M$ is cohomologically concentrated in non-positive degrees for all $n\in \Z$.

\begin{dfn}
	\normalfont
	We say that a category of Tate type $\cat$ satisfies the Beilinson-Soul\'e vanishing condition if $[\un(n),\Sigma^l\un(m)]_{\cat}\cong 0$ for all $n<m$ and all $l \leq 0$.
\end{dfn}

We report now the main theorem from \cite{L} we will use in this paper, that is, the existence of a motivic $t$-structure on categories of Tate type satisfying the Beilinson-Soul\'e vanishing condition.

\begin{thm}[3, Theorem 1.4]\label{tann}
  Let $\cat$ be a category of Tate type satisfying the Beilinson-Soul\'e vanishing condition. Then, the pair $(\cat^{\omega,\leq 0},\cat^{\omega,\geq 0})$ defines a non-degenerate $t$-structure on $\cat^{\omega}$ whose heart $\cat^{\omega,\heartsuit}$ is a neutral Tannakian category with fiber functor given by
  $$\bigoplus_n \gr_n^W:\cat^{\omega,\heartsuit} \rightarrow \Fi-\vs.$$
\end{thm}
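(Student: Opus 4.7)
The plan is to transport Levine's argument from \cite[Theorem 1.4]{L} essentially verbatim, as the author indicates: the inputs are the weight truncation functors $W^{\leq b}, W^{\geq b}$ (which rely only on axioms 1)--3) of a Tate-type category), the Beilinson-Soulé vanishing assumption, and Deligne's reconstruction criterion for neutral Tannakian categories; none of these are sensitive to replacing $\Q$ by a general field $\Fi$.

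The core step is the construction of truncation functors $\tau^{\leq 0}, \tau^{\geq 0}$, which I would build by induction on the width $b-a$ of the weight range of a compact object $M \in \cat^{\omega}_{[a,b]}$. The base case is $M \in \cat^{\omega}_a$, where by the Remark this stable $\infty$-category is equivalent to (bounded complexes of) graded finite-dimensional $\Fi$-vector spaces, so the standard cohomological truncations supply $\tau^{\leq 0}$ and $\tau^{\geq 0}$. For the inductive step, one uses the canonical weight fiber sequence
$$W^{\leq b-1} M \longrightarrow M \longrightarrow \gr_b^W M,$$
with truncations already defined on the outer terms. Here Beilinson-Soulé vanishing is indispensable: it forces
$$[\gr_b^W M, \Sigma^l W^{\leq b-1} M]_{\cat} = 0 \quad \text{for } l \leq 0,$$
since $\gr_b^W M$ has weight $b$ while $W^{\leq b-1} M$ has weights $< b$. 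This vanishing kills precisely the obstructions and ambiguities needed to extend $\tau^{\leq 0}, \tau^{\geq 0}$ functorially to $M$ and to verify their universal property. The t-structure axioms then follow formally, and non-degeneracy follows because every generator $\un(n)$ lies in the heart and compact objects have bounded weight range, so $\bigcap_n \cat^{\omega,\leq -n} = 0$ and dually.

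For the Tannakian structure, the fiber functor $\omega \coloneqq \bigoplus_n \gr_n^W$ lands in $\Fi-\vs$ because each $\gr_n^W$ of an object in $\cat^{\omega,\heartsuit}$ is concentrated in weight $n$ and cohomological degree $0$, hence a single finite-dimensional $\Fi$-vector space by axiom 3). It is faithful (an object with all $\gr_n^W = 0$ is zero), exact (the weight truncations commute with the motivic truncations by the inductive construction), and symmetric monoidal (the tensor product respects the weight filtration and acts as the graded Leibniz rule on weight pieces). Exactness of $\otimes$ on $\cat^{\omega,\heartsuit}$ can then be checked after applying $\omega$, and the rigidity is witnessed by the duals $\un(n)^{\vee} = \un(-n)$ of the generators, which lie in the heart. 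Deligne's reconstruction criterion then identifies $\cat^{\omega,\heartsuit}$ as neutral Tannakian over $\Fi$ with fiber functor $\omega$.

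The main technical obstacle is the inductive construction of $\tau^{\leq 0}, \tau^{\geq 0}$, together with the verification that these choices assemble into functors compatible with the weight filtration. This is precisely where Beilinson-Soulé vanishing enters, and it explains why the theorem applies to $\mbp^{iso}_X - \Mod_{cell}^{\omega}$ but fails for $\mathcal{DM}(k/k, \Fi_2)$ itself. Once this step is in place, everything else is formal, and the verification amounts to rereading \cite[Section 1]{L} with $\Q$ replaced by $\Fi$.
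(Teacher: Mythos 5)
Your proposal is correct and follows essentially the same route as the paper: Theorem \ref{tann} is simply Levine's result \cite[Theorem 1.4]{L}, and the paper's justification is exactly that Levine's proof---the inductive construction of the truncation functors along the weight triangles $W^{\leq b-1}M \rightarrow M \rightarrow \gr_b^W M$, with Beilinson--Soul\'e vanishing supplying the gluing, followed by Tannakian reconstruction of the heart via the fiber functor $\bigoplus_n \gr_n^W$---goes through verbatim with $\Q$ replaced by an arbitrary field $\Fi$. Your sketch reproduces that argument faithfully, so nothing needs to be added.
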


\begin{rem}
	\normalfont
	Under the hypotheses of Theorem \ref{tann}, for every object $M$ in $\cat^{\omega,\heartsuit}$ there exists a pair of integers $a \leq b$ and a weight filtration
	$$\gr_a^W M  \cong W^{\leq a}M \hookrightarrow W^{\leq a+1}M \hookrightarrow \dots \hookrightarrow W^{\leq b-1}M \hookrightarrow W^{\leq b}M \cong M$$
	with successive quotients given by $\gr_c^W M$ for $a \leq c \leq b$.
\end{rem}

\begin{rem}\label{uts}
\normalfont
By \cite[Lemma C.2.4.3]{Lu}, the $t$-structure on $\cat^{\omega}$ from Theorem \ref{tann} extends to $\cat \simeq {\mathrm{Ind}}(\cat^{\omega})$. If we denote by $G$ the Tannaka group associated with the fiber functor $\bigoplus_n \gr_n^W$, then we obtain equivalences of abelian categories
$$\cat^{\omega,\heartsuit} \simeq {\mathrm{rep}}_{\Fi}(G), \: \: \: \cat^{\heartsuit} \simeq {\mathrm{Rep}}_{\Fi}(G).$$
\end{rem}

Before proceeding, we need a few well-known facts about Koszul algebras and coalgebras. For a very useful r\'esum\'e we refer to \cite[Section 3.7]{LV} and \cite[Sections 1 and 2]{PV}.

Let $V$ be an $\Fi$-vector space, $T(V)$ the associated tensor algebra and $R \subseteq V \otimes V$. Then, $A(V,R) \coloneqq T(V)/(R)$ is the quadratic algebra generated by $V$ with relations in $R$. Dually, the quadratic coalgebra associated with the quadratic datum $(V,R)$ is the subcoalgebra of the tensor coalgebra $T^c(V)$ defined as
$$C(V,R) \coloneqq \Fi \oplus V \oplus R \oplus (R \otimes V \cap V \otimes R) \oplus \dots \oplus (\bigcap_{i+j=n-2} V^{\otimes i} \otimes R \otimes V^{\otimes j}) \oplus \dots.$$

In this case, we say that $C = A^{\text{!`}}$ is the Koszul dual coalgebra of $A$ and $A=C^{\text{!`}}$ is the Koszul dual algebra of $C$. We also denote by $A^!$ the Koszul dual algebra of $A$, that is, the linear dual $A^{\text{!`}*}$ of the coalgebra $A^{\text{!`}}$.

\begin{rem} \label{um}
	\normalfont
A graded coalgebra $C=\bigoplus_{n \in \N}C^{(n)}$ is called strictly graded if $C^{(0)} \cong \Fi$ and $C^{(1)}$ coincides with the set of primitive elements of $C$. By \cite[Theorem 12.1.4]{Sw}, any strictly graded coalgebra $C$ admits at most one graded algebra structure that makes $C$ a graded bialgebra. Since any subcoalgebra of a tensor coalgebra is strictly graded, this holds in particular for quadratic coalgebras. 
\end{rem}

Denote by $H_{**}(A) \coloneqq {\mathrm{Tor}}_*^A(\Fi,\Fi)$ the homology coalgebra of $A$ and by $H^{**}(C) \coloneqq {\mathrm{Ext}}_C^{*}(\Fi,\Fi)$ the cohomology algebra of $C$.

One of the possible definitions of Koszul algebras and coalgebras is the following.

\begin{dfn}
	\normalfont
	A quadratic algebra $A$ is called Koszul if there is an isomorphism of coalgebras $A^{\text{!`}} \cong H_{**}(A)$. Similarly, a quadratic coalgebra $C$ is called Koszul if there is an isomorphism of algebras $C^{\text{!`}} \cong H^{**}(C)$.
\end{dfn}

\begin{rem}
	\normalfont
	Note that a quadratic algebra is Koszul if and only if its Koszul dual coalgebra is Koszul.
\end{rem}

We finish this section with a result establishing the sufficient conditions for a category of Tate type satisfying the Beilinson-Soul\'e vanishing condition to be equivalent to the derived category of its heart. The underlying philosophy is based on results from \cite{P}.

\begin{thm}\label{kosz}
  Let $\cat$ be a category of Tate type satisfying the Beilinson-Soul\'e vanishing condition. Assume further that $[\un(n),\Sigma^l\un(m)]_{\cat}\cong 0$ for all $l\neq m-n$. Then, there exists an equivalence of stable $\infty$-categories
  $${\mathcal{D}}^b(\cat^{\omega,\heartsuit}) \rightarrow \cat^{\omega}$$
if and only if the graded algebra $A_*$, defined by $A_n \coloneqq [\un,\Sigma^n\un(n)]_{\cat}$, is Koszul.
\end{thm}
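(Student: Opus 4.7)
The plan is to construct the standard realization functor
$$R: \mathcal{D}^b(\cat^{\omega,\heartsuit}) \to \cat^\omega$$
induced by the inclusion of the heart, and to recast the question of its being an equivalence in terms of Koszul duality for the coalgebra attached to the Tannakian heart. The functor $R$ is built by general $t$-structure machinery for stable $\infty$-categories (compare \cite[App.~C]{Lu}), is $t$-exact, and restricts to the identity on $\cat^{\omega,\heartsuit}$. Since both $\cat^\omega$ and $\mathcal{D}^b(\cat^{\omega,\heartsuit})$ are compactly generated by the objects $\{\un(n)\}_{n\in\Z}$, all of which lie in the heart, the functor $R$ is an equivalence if and only if it induces isomorphisms
$$\mathrm{Ext}^l_{\cat^{\omega,\heartsuit}}(\un(n),\un(m)) \xrightarrow{\sim} [\un(n),\Sigma^l\un(m)]_\cat$$
for every triple $n,m,l \in \Z$.

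By the diagonal vanishing hypothesis, the right-hand side is supported only in bidegrees $l = m-n$, where Yoneda composition recovers the graded $\Fi$-algebra $A_*$ of the statement. For the left-hand side, Remark \ref{uts} identifies $\cat^{\omega,\heartsuit}$ with the category of finite-dimensional graded comodules over $C \coloneqq \mathcal{O}(G)$, with $G$ the Tannaka group of the fiber functor. Under this identification $\un(n)$ corresponds to the pure weight-$n$ comodule $\Fi(n)$, so that
$$\bigoplus_{l,k} \mathrm{Ext}^l_{\cat^{\omega,\heartsuit}}(\un,\un(k)) \;=\; H^{**}(C).$$

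The core of the argument is the identification of $C$ with the Koszul dual coalgebra $A^{\text{!`}}$ as graded coalgebras. Since $C$ is strictly graded, Remark \ref{um} supplies at most one compatible graded bialgebra structure. The primitive part $C^{(1)}$ classifies nontrivial extensions of consecutive weight-pure objects in the heart: the realization is fully faithful on $\mathrm{Hom}$ and injective on $\mathrm{Ext}^1$, giving an injection $\mathrm{Ext}^1_{\cat^{\omega,\heartsuit}}(\un,\un(1)) \hookrightarrow [\un,\Sigma\un(1)]_\cat = A_1$, and a direct argument lifting extensions from $\cat^\omega$ to the heart shows this injection is surjective. Hence $C^{(1)} \cong A_1^*$ and there is a canonical surjection $T^c(A_1^*) \twoheadrightarrow C$. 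The degree-two kernel is determined by which pairs of consecutive extensions assemble into a three-step filtered object of $\cat^\omega$; an obstruction calculation identifies this with the orthogonal complement of $\ker(A_1\otimes A_1\to A_2)$, matching exactly the defining quadratic relations of $A^{\text{!`}} \subseteq T^c(A_1^*)$. This produces the desired isomorphism $C \cong A^{\text{!`}}$.

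Combining these identifications, $R$ is an equivalence precisely when the natural graded algebra map $H^{**}(A^{\text{!`}}) \to A_*$ is an isomorphism, which is by definition the Koszulity of $A^{\text{!`}}$, equivalently of $A_*$; the resulting derived equivalence is then packaged by Positselski's framework \cite{P}. The main obstacle is the identification $C \cong A^{\text{!`}}$: matching $C^{(1)}$ with $A_1^*$ is essentially formal from the injectivity of realization on $\mathrm{Ext}^1$, but pinning down the quadratic relations requires a careful obstruction analysis of length-two extensions together with some bookkeeping to align the dual conventions on the two sides.
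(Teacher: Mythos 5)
Your overall strategy (realization functor, dévissage to the generators $\un(n)$, identification of the Tannakian coalgebra with the quadratic dual $A^{\text{!`}}$, then Koszul duality) is coherent, but it hinges on a step that is asserted rather than proved, and that step carries essentially all the content. Your argument pins down only the weight-$1$ and weight-$2$ components of the coalgebra $C$ (via $\ext^1$ in the heart and an unspecified ``obstruction calculation'' for two-step extensions). A strictly graded conilpotent coalgebra is \emph{not} determined by its components in weights $\leq 2$: to conclude $C \cong A^{\text{!`}}$ you must also know that $C$ is quadratic, i.e.\ that the canonical coalgebra map $C \to T^c(C^{(1)})$ is injective (``cogenerated in weight $1$'') with image the largest subcoalgebra having the prescribed weight-$2$ part, and nothing in your proposal addresses this; it is the dual of ``generated in degree $1$ with relations in degree $2$'' and is a genuine condition, not a formality. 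The honest route to this identification is homological: one identifies $C$ with the diagonal part of $\mathrm{Tor}^{A_*}(\Fi,\Fi)$ using purity/formality of the endomorphism algebra, which is exactly the derived machinery the paper only deploys later (Section 9, following \cite{S}), or one invokes Positselski's theorem, which already packages precisely the statement you are trying to reprove by hand. There is also a conventions problem: with the paper's definitions $C^{(1)} \cong A_1$ (not $A_1^*$), the canonical map goes $C \to T^c(C^{(1)})$ rather than being a surjection $T^c(A_1^*) \twoheadrightarrow C$, and since in the intended applications $A_1$ is infinite-dimensional (e.g.\ $A_1 \cong \km^M_1(F/k)$), arguments phrased through linear duals and ``orthogonal complements'' of $R \subseteq A_1 \otimes A_1$ do not dualize innocently.

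For comparison, the paper's proof is much shorter and bypasses the Tannakian coalgebra entirely: by \cite[Section 1.2, Theorem]{P}, Koszulity of $A_*$ is \emph{equivalent} to the comparison isomorphism $\ext^n_{\cat^{\heartsuit}}(X,Y) \cong [X,\Sigma^n Y]_{\cat}$ for all heart objects; since $\cat^{\heartsuit} \simeq {\mathrm Rep}_{\Fi}(G)$ has enough injectives and maps into positive shifts of injectives then vanish, \cite[Proposition 2.12]{GWX} produces the equivalence ${\mathcal D}^b(\cat^{\heartsuit}) \simeq \cat^{b}$, and restricting to compact objects gives ${\mathcal D}^b(\cat^{\omega,\heartsuit}) \simeq \cat^{\omega}$; the converse direction likewise falls out of the ``if and only if'' in Positselski's theorem. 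So the place where your write-up waves its hands (``the core of the argument'') is exactly the place where the paper instead cites \cite{P}; to make your version complete you would need to supply the quadraticity of $C$, e.g.\ by proving $C \cong \bigoplus_n \mathrm{Tor}^{A_*}_n(\Fi,\Fi)_{(n)}$ via a formality argument, which is a substantially longer detour than the quoted results.
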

\begin{proof}
By Remark \ref{uts} we know that $\cat^{\heartsuit}$ is the abelian category of representations of a Tannaka group, and so it has enough injectives. Moreover, by \cite[Section 1.2, Theorem]{P}, the algebra $A_*$ is Koszul if and only if
$$\ext^n_{\cat^{\heartsuit}}(X,Y) \cong [X, \Sigma^nY]_{\cat}$$
for all $X$ and $Y$ in $\cat^{\heartsuit}$ and for all $n \geq 0$. In particular, if $Y$ is injective we have that $[X, \Sigma^nY]_{\cat} \cong 0$ for all $n > 0$. Then, \cite[Proposition 2.12]{GWX} implies that there exists an equivalence of stable $\infty$-categories
$${\mathcal{D}}^b(\cat^{\heartsuit}) \rightarrow \cat^{b}$$
where $\cat^{b}$ is the bounded stable subcategory of $\cat$. By restricting the previous equivalence to the subcategories of compact objects
$${\mathcal{D}}^b(\cat^{\omega,\heartsuit}) \simeq {\mathcal{D}}^b(\cat^{\heartsuit})^{\omega} \simeq \cat^{\omega}$$
we conclude the proof.
\end{proof}

\section{Relative isotropic motivic categories}

Let $k=k_0(t_1,t_2,\dots)$ be a flexible field, that is, a purely transcendental extension of infinite degree over some other field, of characteristic different from $2$. Now, we recall from \cite{V} the definition of anisotropic variety mod 2.

\begin{dfn}
	\normalfont
A smooth variety over $k$ is called anisotropic mod 2 if all its closed points have even degree. It is called isotropic mod 2 if it is not anisotropic mod 2.
\end{dfn}

\begin{rem}
\normalfont
We point out that the definition of anisotropic variety is available also for odd primes $p$ over fields of characteristic different from $p$. Hence, all the constructions we are going to introduce here are available also for odd primes.

Unfortunately, the results we obtain are only available for the prime 2. The only obstruction boils down to \cite[Question 3.9]{V}. A positive answer to this question would guarantee similar results to the ones contained in this paper also for odd primes. 

Anyways, since we are ultimately considering only the prime 2, we will sometimes simply call anisotropic (isotropic) varieties the ones that are actually anisotropic (isotropic) mod 2.
\end{rem} 

\begin{dfn}
	\normalfont
Denote by $\un^{iso}_k$ the isotropic sphere spectrum, that is, 
$$\un^{iso}_k \coloneqq \cof(\Sigma^{\infty}_+ \ce(Q) \rightarrow \un)$$
in $\SH(k)$, where $Q$ is the coproduct of all $k$-anisotropic varieties mod 2 and $\ce(Q)$ is its \v{C}ech simplicial scheme. 
\end{dfn}

We know from \cite[Proposition 6.1]{T1} that $\un_k^{iso}$ is an $E_{\infty}$-ring spectrum. Denote by $\SH^{iso}(k)$ the isotropic stable motivic homotopy category, namely $\SH^{iso}(k) \coloneqq \un^{iso}_k-\Mod$. This comes equipped with a smashing localization functor $L_k^{iso}:\SH(k) \rightarrow \SH^{iso}(k)$, and we will simply denote by $E^{iso}_k$ the spectrum $L_k^{iso}E \coloneqq \un^{iso}_k \wedge E$, for any spectrum $E$ in $\SH(k)$. We refer to \cite{T} and \cite{T1} for more details.

Recall from \cite[Proposition 7.1]{T} that $\mbp^{iso}_k$ is an $E_{\infty}$-ring spectrum, and so we may consider the stable $\infty$-category of $\mbp^{iso}_k$-modules. We call the cohomology theory represented by $\mbp^{iso}_k$ isotropic $\mbp$-cohomology, namely
$$\mbp_{iso}^{**}(X) \coloneqq [\Sigma^{\infty}_+X,\Sigma^{**}\mbp^{iso}_k]_{\SH(k)}$$
for any smooth scheme $X$ over $k$.

\begin{rem}
\normalfont
The cellular subcategory of $\mbp^{iso}_k-\Mod$, i.e. the full localizing subcategory generated by Tate objects, is completely identified with the category of bigraded $\F$-vector spaces by \cite[Theorem 7.4]{T}. 
\end{rem}

In this paper we are interested in studying relative versions of $\mbp^{iso}_k-\Mod$. More precisely, we consider the following situation. For every smooth variety $X$ over $k$, the structure map $p_X: X \rightarrow \spec(k)$ induces a pair of adjoint functors
$$p_{X,\#}:\SH(X)\leftrightarrows \SH(k): p_X^*.$$

\begin{dfn}
	\normalfont
Let $\mbp^{iso}_X$ be the spectrum $p_X^*(\mbp^{iso}_k)$ in $\SH(X)$. This is an $E_{\infty}$-ring spectrum, and we denote by $\mbp^{iso}_X-\Mod$ the stable $\infty$-category of modules over it. We also denote by $\mbp^{iso}_X-\Mod_{cell}$ its full localizing subcategory generated by $\Sigma^{m,n}\mbp^{iso}_X$ for all $m,n \in \Z$.
\end{dfn}

The main motivation for investigating $\mbp^{iso}_X-\Mod_{cell}$ comes from the fact that we want to understand isotropic $\mbp$-cohomology, and this theory is represented in $\mbp^{iso}_X-\Mod_{cell}$. In fact, we have the following result.

\begin{prop}\label{rep}
For every smooth variety $X$ over $k$, the endomorphisms of the unit object $\un \coloneqq \mbp^{iso}_X$ in $\mbp^{iso}_X-\Mod$ are given by
$$[\un,\Sigma^{m,n}\un]_{\mbp^{iso}_X-\Mod} \cong \mbp_{iso}^{m,n}(X).$$
\end{prop}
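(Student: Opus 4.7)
The proof plan is to chain together two standard adjunctions. First, since $\mbp^{iso}_X$ is the unit in $\mbp^{iso}_X-\Mod$, the free-forgetful adjunction between $\mbp^{iso}_X-\Mod$ and $\SH(X)$ sends the unit $\un_X$ of $\SH(X)$ to the free $\mbp^{iso}_X$-module on one generator, which is $\mbp^{iso}_X$ itself. This gives
$$[\un,\Sigma^{m,n}\un]_{\mbp^{iso}_X-\Mod} \cong [\un_X, \Sigma^{m,n}\mbp^{iso}_X]_{\SH(X)}.$$

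Second, by the definition $\mbp^{iso}_X = p_X^*(\mbp^{iso}_k)$ and the adjunction $p_{X,\#} \dashv p_X^*$ between $\SH(X)$ and $\SH(k)$, we obtain
$$[\un_X, \Sigma^{m,n}p_X^*\mbp^{iso}_k]_{\SH(X)} \cong [p_{X,\#}\un_X, \Sigma^{m,n}\mbp^{iso}_k]_{\SH(k)},$$
using that $p_{X,\#}$ commutes with the bigraded suspension. Finally, I would invoke the standard identification $p_{X,\#}\un_X \cong \Sigma^{\infty}_+ X$, which is how the six-functor formalism recovers the ordinary suspension spectrum of a smooth $k$-scheme from the base change. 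Putting the two adjunctions together, the right-hand side is precisely the definition of $\mbp_{iso}^{m,n}(X)$, and the chain of isomorphisms yields the statement.

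There is no real obstacle: the proof is a direct diagram chase through two well-known adjunctions, and everything is formal once one grants the compatibility $p_{X,\#}\un_X \cong \Sigma^{\infty}_+X$. The only point worth flagging is to make sure the hom is computed in the underlying triangulated homotopy category, so that the adjunction isomorphisms pass to $\pi_0$ of mapping spaces as required by the notation $[-,-]$.
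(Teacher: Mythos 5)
Your proof is correct and follows essentially the same route as the paper: the paper's own argument is exactly the chain of the free--forgetful adjunction for $\mbp^{iso}_X$-modules, the adjunction $p_{X,\#} \dashv p_X^*$, and the identification $p_{X,\#}\un \cong \Sigma^{\infty}_+X$, just written in the reverse direction starting from the definition of $\mbp_{iso}^{m,n}(X)$. Nothing is missing.
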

\begin{proof}
The result follows from the following chain of isomorphisms
\begin{align*}
\mbp_{iso}^{m,n}(X) & \coloneqq [\Sigma^{\infty}_+X,\Sigma^{m,n}\mbp^{iso}_k]_{\SH(k)}  \\
& \cong [p_{X,\#}\un,\Sigma^{m,n}\mbp^{iso}_k]_{\SH(k)}\\
& \cong [\un,\Sigma^{m,n}p_X^*(\mbp^{iso}_k)]_{\SH(X)}\\
& \cong [\un,\Sigma^{m,n}\un]_{\mbp^{iso}_X-\Mod}.
\end{align*}
\end{proof}

\begin{rem}\label{needit}
	\normalfont
From the proof of \cite[Lemma 9.1]{T} we know that there is an equivalence of isotropic spectra
$$\hz^{iso}_k \simeq \bigvee_{\alpha \in A} \Sigma^{p_{\alpha},q_{\alpha}}\mbp^{iso}_k$$
for some set $A$. For every smooth variety $X$ over $k$, this induces a sequence of isomorphisms
\begin{align*}
	\hz_{iso}^{**}(X) & \coloneqq [\Sigma^{\infty}_+X,\Sigma^{**}\hz^{iso}_k]_{\SH(k)}  \\
	& \cong [\Sigma^{\infty}_+X,\bigvee_{\alpha \in A}\Sigma^{p_{\alpha},q_{\alpha}}\Sigma^{**}\mbp^{iso}_k]_{\SH(k)}\\
& \cong \bigoplus_{\alpha \in A} \Sigma^{p_{\alpha},q_{\alpha}}[\Sigma^{\infty}_+X,\Sigma^{**}\mbp^{iso}_k]_{\SH(k)}\\
	& \cong \bigoplus_{\alpha \in A} \Sigma^{p_{\alpha},q_{\alpha}}\mbp^{**}_{iso}(X),
\end{align*}
which for $X=\spec(k)$ gives
$$\hz_{iso}^{**}(k) \cong  \bigoplus_{\alpha \in A} \Sigma^{p_{\alpha},q_{\alpha}}\mbp^{**}_{iso}(k) \cong \bigoplus_{\alpha \in A} \Sigma^{p_{\alpha},q_{\alpha}} \Fi_2.$$

Therefore, we obtain an isomorphism
$$\hz_{iso}^{**}(X) \cong \hz_{iso}^{**}(k)\otimes_{\F} \mbp_{iso}^{**}(X)$$
where $\hz_{iso}^{**}(k) \cong \Lambda_{\Fi_2}(r_i:i \geq 0)$, with generators $r_i \in \hz_{iso}^{-2^{i+1}+1,-2^i+1}(k)$, by \cite[Theorem 3.7]{V}.

By restricting our attention to the pure part we get 
$$\mbp_{iso}^{2q,q}(X) \cong \hz_{iso}^{2q,q}(X) \cong {\mathrm{CH}}^q_{num}(X)/2$$
where the last isomorphism follows from \cite[Theorem 4.12]{V1}. This means that algebraic cycles of $X$ modulo numerical equivalence with $\Fi_2$-coefficients are represented in $\mbp^{iso}_X-\Mod_{cell}$.
\end{rem}

In this paper we also study relative isotropic categories for finitely generated field extensions $F/k$. Denote by $p_F: \spec(F) \rightarrow \spec(k)$ the structure map, and by $p_F^*:SH(k)\rightarrow \SH(F)$ the induced pullback functor.

\begin{dfn}
	\normalfont
	Let $\un_{F/k}$ be the $E_{\infty}$-ring spectrum $p_F^*(\un^{iso}_k)$ in $\SH(F)$. Denote by $\SH(F/k)$ the stable $\infty$-category of modules over $\un_{F/k}$. We also denote by $\mbp_{F/k}-\Mod$ the stable $\infty$-category of modules over $\mbp_{F/k}\coloneqq p_F^*(\mbp^{iso}_k)$, and by $\mbp_{F/k}-\Mod_{cell}$ its full localizing subcategory generated by $\Sigma^{m,n}\mbp_{F/k}$ for all $m,n \in \Z$.
\end{dfn}

\begin{rem}\label{funcfi}
	\normalfont
	Note that, if $F$ is the function field of a smooth variety $X$ over $k$, then
	$$\mbp_{iso}^{**}(F) \coloneqq [\un,\Sigma^{**}\mbp_{F/k}]_{\SH(F)} \cong \colim_U [\un,\Sigma^{**}\mbp^{iso}_{U}]_{\SH(U)} \cong \colim_U \mbp_{iso}^{**}(U)$$
	where the colimit is taken over all open $U \subset X$. 
	
	Similarly, there is an isomorphism $\hz_{iso}^{**}(F)\cong \colim_U \hz_{iso}^{**}(U)$. Therefore, it follows from Remark \ref{needit} that
	$$\hz_{iso}^{**}(F) \cong \hz_{iso}^{**}(k)\otimes_{\F} \mbp_{iso}^{**}(F).$$
\end{rem}

\section{Isotropic Milnor K-theory}

The aim of this section is to define isotropic Milnor K-theory, and to compare it to isotropic $\mbp$-cohomology. This will eventually provide the needed vanishing conditions for studying certain categories of isotropic Tate motives.

\begin{dfn}
\normalfont
 For every finitely generated field extension $F/k$, denote by $\km^M_*(F/k)$ the quotient of the mod 2 Milnor K-theory $\km^M_*(F)$ by the ideal $I_k$ defined as the union of annihilators of the image of non-zero pure symbols from $\km^M_*(k)$. We call $\km^M_*(F/k)$ the isotropic Milnor K-theory of the extension $F/k$.   
\end{dfn}

\begin{prop}\label{isoan}
Let $k$ be a flexible field, $X$ a smooth variety over $k$ and $F$ its function field. Then, the following are equivalent:
\begin{enumerate}
	\item $X$ is anisotropic mod 2,
	\item  there is a non-zero pure symbol of $\km^M_*(k)$ vanishing in $\km^M_*(F)$,
	\item the homomorphism $\km^M_*(k) \rightarrow \km^M_*(F)$ is not injective,
	\item $\km^M_*(F/k)\cong 0$.
\end{enumerate}
\end{prop}
\begin{proof}
	We start by proving that (1) implies (2). Since $X$ is anisotropic mod 2 and $k$ is flexible, by \cite[Statement 3.1]{V} and \cite[Theorem 1.1]{HI}, there is a non-zero pure symbol $\alpha$ in $\km^M_*(k)$ such that $X$ is contained in the Pfister quadric $Q_{\alpha}$ associated with $\alpha$. Here, we crucially rely on the flexibility of $k$, which ensures that passing to purely transcendental extensions of finite degree does not change the base field. It follows that $Q_{\alpha}$ has a rational point over $F$, which means that $\alpha$ vanishes in $\km^M_*(F)$.
	
	The implication (2) $\Rightarrow$ (3) and the equivalence (2) $\Leftrightarrow$ (4) are obvious, so we only need to show that (3) implies (1). Assume that $X$ is isotropic, that is, it has a closed point $\spec(E)$ of odd degree. Since the restriction map $\km^M_*(k) \rightarrow \km^M_*(E)$ is injective and factors through $\km^M_*(X)\coloneqq H^0_{Zar}(X,\underline{\km}^M_*)$, we deduce that the homomorphism $\km^M_*(k) \rightarrow \km^M_*(X)$ is also injective. On the other hand, the restriction to the generic point $\km^M_*(X) \rightarrow \km^M_*(F)$ is a monomorphism, which implies that the composition $\km^M_*(k) \rightarrow \km^M_*(F)$ is injective as well. This completes the proof.
	\end{proof}

\begin{rem}\label{evenext}
	\normalfont
	The previous result implies in particular that $\km^M_*(F/k)\cong 0$ for any finite extension $F/k$ of even degree.
\end{rem}

Proposition \ref{isoan} suggests that, for the sake of understanding isotropic motives, only certain field extensions should be considered. This motivates the following definition.

\begin{dfn}
	\normalfont
	A finitely generated field extension $F/k$ over a flexible field $k$ is called isotropic if $F$ is the function field of an isotropic smooth variety $X$ over $k$.
	
	In other words, $F/k$ is isotropic if and only if the homomorphism $\km^M_*(k) \rightarrow \km^M_*(F)$ is injective.
\end{dfn}

\begin{prop}\label{struc}
    For every finitely generated field extension $F/k$, $\km^M_*(F/k)$ is the quotient of the tensor algebra $T(\km^M_1(F/k))$ over the $\Fi_2$-vector space $\km^M_1(F/k)$ modulo the ideal generated by $a \otimes (1-a)$ for $a$ and $1-a$ in $\km^M_1(F/k)$.
\end{prop}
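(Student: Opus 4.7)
The plan is to reduce the proposition to showing that the ideal $I_k$ is generated in degree one, and then to identify the resulting quotient with the claimed quadratic $\Fi_2$-algebra. First I would recall the standard presentation of mod~$2$ Milnor $K$-theory as a quadratic algebra
\[
\km^M_*(F) \;\cong\; T_{\Fi_2}(\km^M_1(F))\,/\,(\{a\}\otimes\{1-a\} : a \in F^\times \setminus \{1\}),
\]
the mod~$2$ commutativity $\{a,b\}=\{b,a\}$ being a formal consequence of the Steinberg relations. Set $J_1 := I_k \cap \km^M_1(F) \subseteq \km^M_1(F)$ and let $J \subseteq \km^M_*(F)$ be the two-sided ideal generated by $J_1$. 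Since $J_1 \subseteq I_k$ and $I_k$ is itself an ideal, we have $J \subseteq I_k$ and an induced surjection $\km^M_*(F)/J \twoheadrightarrow \km^M_*(F/k)$. From the presentation above one reads off
\[
\km^M_*(F)/J \;\cong\; T_{\Fi_2}(\km^M_1(F)/J_1)\,/\,(\bar a \otimes (1-\bar a)) \;=\; T_{\Fi_2}(\km^M_1(F/k))\,/\,(\bar a \otimes (1-\bar a)),
\]
using that $\km^M_1(F)/J_1 = \km^M_1(F/k)$ by the very definition of $\km^M_*(F/k)$. Thus the proposition is equivalent to the equality $J = I_k$, i.e., to the assertion that $I_k$ is generated as an ideal by its degree-one part $J_1$.

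To establish this, I would note that $I_k$ coincides with the sum of the annihilator ideals $\mathrm{Ann}_{\km^M_*(F)}(\bar\alpha)$ taken over all non-zero pure symbols $\alpha \in \km^M_*(k)$; it therefore suffices to show that each individual $\mathrm{Ann}(\bar\alpha)$ is generated in degree one. This is a structural result about annihilators of Pfister symbols in mod~$2$ Milnor $K$-theory: via the norm residue isomorphism one identifies $\mathrm{Ann}(\bar\alpha) \subseteq \km^M_*(F)$ with the annihilator of the Pfister form class $\langle\langle a_1, \dots, a_n \rangle\rangle$ in the fundamental ideal filtration of the Witt ring, and the latter is known to be generated by one-fold Pfister classes $\langle\langle b \rangle\rangle$ with $b$ represented by $\langle\langle a_1, \dots, a_n \rangle\rangle$ over $F$. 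Equivalently, one can argue via transfer from the function field of the associated Pfister quadric $Q_\alpha$, whose transferred elements provide the requisite degree-one annihilators.

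The hard part will be precisely this last step, the degree-one generation of $\mathrm{Ann}(\bar\alpha)$, which rests on nontrivial input from quadratic form theory (the norm residue isomorphism, together with generation properties of annihilators of Pfister forms). Once it is in hand, the formal reduction of the first paragraph together with the identification $\km^M_1(F)/J_1 = \km^M_1(F/k)$ immediately yields the claimed presentation, with the Steinberg relations for $\km^M_*(F/k)$ descending unchanged from those on $F$.
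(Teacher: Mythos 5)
Your argument is correct and follows essentially the same route as the paper: both reduce the statement to the fact that the ideal $I_k$ is generated by its degree-one part and then conclude by a formal identification of $T(\km^M_1(F/k))$ modulo the induced Steinberg relations (the paper does this by a diagram chase, you by quotienting the presentation of $\km^M_*(F)$). The degree-one generation that you flag as the hard step is precisely what the paper imports from \cite[Theorem 3.3]{OVV}, i.e.\ the result on annihilators of non-zero pure symbols that you describe in quadratic-form language.
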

\begin{proof}
Denote by $I_{k,1}$ the set of elements in degree 1 of the ideal $I_k \subset \km^M_*(F)$. It follows from \cite[Theorem 3.3]{OVV} that $I_k$ is the ideal generated by $I_{k,1}$. Hence, we get a commutative diagram
	$$
\xymatrix{
	0\ar@{->}[r] \ar@{->}[d]&(I_{k,1})  \ar@{->}[r] \ar@{->>}[d]_f& T(\km^M_1(F)) \ar@{->}[r] \ar@{->>}[d]^g& T(\km^M_1(F/k)) \ar@{->}[r] \ar@{->>}[d]^h& 0 \ar@{->}[d]\\
	0\ar@{->}[r] &I_k  \ar@{->}[r] & \km^M_*(F) \ar@{->}[r]& \km^M_*(F/k)\ar@{->}[r] &0
}
$$
where the rows are short exact sequences.

Since $f$ is surjective, we deduce by the snake lemma that the homomorphism $\ker(g) \rightarrow \ker(h)$ is surjective as well. Then, the result follows from the fact that $\ker(g)$ is the ideal generated by $a \otimes (1-a)$ for $a$ and $1-a$ in $\km^M_1(F)$.
\end{proof}

\begin{rem}
\normalfont
  Proposition \ref{struc} shows in particular that the isotropic Milnor K-ring of any extension $F/k$ is a quadratic $\Fi_2$-algebra. This follows in general from the fact that, if we take the quotient of a quadratic algebra by an ideal generated in degree 1, we still get a quadratic algebra.
\end{rem}

In \cite{P}, it is conjectured that the Milnor K-ring of a field with $\Fi_p$-coefficients is a Koszul $\Fi_p$-algebra. The conjecture is proved to be true for $\Q$ and finite fields in \cite{PV}. We make the same conjecture for isotropic Milnor K-theory.

\begin{conj}\label{kos}
 For every finitely generated field extension $F/k$, the isotropic Milnor $K$-ring $\km^M_*(F/k)$ is a Koszul $\Fi_2$-algebra.
\end{conj}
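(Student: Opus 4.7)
The plan is to reduce the conjecture, via a tower of field extensions, to the two cases already handled in the paper: finite extensions and purely transcendental extensions of transcendence degree one. Since $\km^M_*(F/k)$ is quadratic by Proposition \ref{struc}, Koszulity is equivalent (by one of the standard characterizations) to the vanishing of the off-diagonal Ext-groups $\ext^{i,j}_{\km^M_*(F/k)}(\Fi_2,\Fi_2)$ for $i\neq j$, i.e.\ to showing that the natural Koszul complex associated with the quadratic datum $(\km^M_1(F/k), R)$ is acyclic in positive internal degrees.

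First, I would choose a transcendence basis $t_1,\dots,t_n$ for $F/k$ so that $F$ is a finite extension of $L\coloneqq k(t_1,\dots,t_n)$, and interpolate a tower
\begin{equation*}
k=F_0\subset F_1=k(t_1)\subset\dots\subset F_n=L\subset F.
\end{equation*}
The finite step $L\subset F$ is isotropic if and only if it is of odd degree, in which case $\km^M_*(F/k)$ is a retract of $\km^M_*(L/k)$ via norm/restriction (by the standard transfer argument for Milnor K-theory mod $2$), so Koszulity will transfer from $\km^M_*(L/k)$ to $\km^M_*(F/k)$ since Koszulity descends along split surjections of quadratic algebras. This reduces the conjecture to the purely transcendental case $F=k(t_1,\dots,t_n)$.

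Second, I would perform induction on $n$. The base case $n=1$ is handled by Theorem \ref{pones} applied to $\Pone\setminus S$ as $S$ varies: indeed, passing to the limit over finite subsets of $k$-rational points recovers $\mbp_{k(t)/k}-\Mod_{cell}^\omega$, whose description as $\mathcal{D}^b(\mathrm{gr.rep}_{\Fi_2}(\spec T^c(V)))$ forces the Koszul dual algebra of $\km^M_*(k(t)/k)$ to be the cofree conilpotent coalgebra, which is exactly the Koszulity statement via Theorem \ref{kosz}. For the inductive step $F_{n-1}\subset F_n=F_{n-1}(t_n)$, I would try to establish an isotropic analogue of Milnor's exact sequence
\begin{equation*}
0\to \km^M_*(F_{n-1}/k)\to \km^M_*(F_n/k)\to \bigoplus_{P}\km^M_{*-1}(F_{n-1,P}/k)\to 0
\end{equation*}
split as graded $\km^M_*(F_{n-1}/k)$-modules, where $P$ ranges over closed points of $\mathbb{A}^1_{F_{n-1}}$. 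Given such a splitting, a standard argument (the class of Koszul modules is closed under extensions compatible with the internal grading, and a free extension of Koszul algebras by a Koszul module is Koszul) would promote Koszulity from $F_{n-1}/k$ to $F_n/k$.

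The main obstacle will be establishing this isotropic specialization sequence: the ideal $I_k\subset \km^M_*(F)$ is defined as the union of annihilators of non-zero pure symbols from $\km^M_*(k)$, and it is not at all transparent how $I_k$ interacts with the residue maps $\partial_P:\km^M_*(F_{n-1}(t_n))\to \km^M_{*-1}(F_{n-1,P})$. A possible workaround is to bypass the Milnor K-theoretic formulation entirely and argue cohomologically: by the identification of the diagonal of $\mbp^{**}_{iso}(F)$ with $\km^M_*(F/k)$, one could try to exhibit a geometric model for the inductive step (for instance via the localization sequence for $\mathbb{A}^1_{F_{n-1}}\setminus\{P\}\hookrightarrow \mathbb{A}^1_{F_{n-1}}$ in $\mbp^{iso}$-modules) and deduce the required short exact sequence from the behaviour of the motivic $t$-structure, bootstrapping off the explicit computation for split tori in Theorem \ref{uisogmn}. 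Even so, controlling the quadratic relations through this limiting procedure, rather than just the graded dimensions, is the genuinely hard point.
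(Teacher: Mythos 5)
This statement is a conjecture in the paper, not a theorem: the paper only establishes it for finite extensions (Proposition \ref{vanfe}) and for purely transcendental extensions of degree one (Corollary \ref{pte}), so any complete argument you give would settle an open problem, and your proposal does not. The first reduction already breaks down. Isotropy of $F/k$ is a condition relative to the flexible base $k$, not relative to an intermediate purely transcendental subfield $L=k(t_1,\dots,t_n)$; for instance $F=k(\sqrt{t})$ is isotropic over $k$ (it is again purely transcendental) although $[F:k(t)]=2$, so the claim that the finite step $L\subset F$ is isotropic if and only if it has odd degree is false, and with it the dichotomy on which your reduction rests. Even in the odd-degree case the transfer argument does not do what you want: restriction-corestriction makes $\km^M_*(L)$ a retract of $\km^M_*(F)$ as a \emph{module} over $\km^M_*(L)$ (the corestriction is not a ring map), it does not exhibit $\km^M_*(F/k)$ as an algebra retract of $\km^M_*(L/k)$, and Koszulity is in any case not stable under passing to quotients or retracts of quadratic algebras in the generality you invoke. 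Note also that the paper's finite-extension result concerns finite extensions \emph{of $k$ itself}, where the isotropic Milnor K-ring degenerates to $\Fi_2$; it says nothing about finite extensions of a function field $L$ over $k$, which is the case your tower produces.

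The inductive step is the genuinely open part, and you correctly identify it, but that means the proposal is a programme rather than a proof. Even granting an isotropic analogue of the Bass--Tate--Milnor sequence for $F_{n-1}(t_n)/F_{n-1}$ (which is itself unclear, since the ideal $I_k$ need not be compatible with the residue maps), the step ``extension of a Koszul algebra by a direct sum of shifted Koszul modules is Koszul'' is not a standard theorem in the form you need: the subtle point is precisely the quadratic algebra structure and the lattice/distributivity conditions, not the graded dimensions or the module filtration. Indeed, for classical Milnor K-theory mod $p$ the Milnor exact sequence for $F(t)/F$ has always been available, and yet the Positselski--Vishik Koszulity conjecture is known only in special cases; if your inductive mechanism worked as stated, it would already prove that conjecture for all finitely generated extensions of $\Q$ or of finite fields. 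The base case is also shakier than you suggest: taking the limit of Theorem \ref{pones} over finite sets $S$ of $k$-rational points does not recover $\mbp_{k(t)/k}-\Mod_{cell}$, because one must remove all closed points, not only rational ones; the paper instead handles $k(x)/k$ directly via the homotopy property of Rost cycle modules (Corollary \ref{pte}), where Koszulity is immediate because the algebra is concentrated in degrees $0$ and $1$.
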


In the next section we prove Conjecture \ref{kos}, independently on the base flexible field $k$, for finite extensions $F$ and purely transcendental extensions $k(x)$ of degree 1.

Before proceeding with showing the deep relation between isotropic Milnor K-theory and isotropic $\mbp$-cohomology, we need to recall from \cite{M} the definition of the homotopy $t$-structure on $\SH(k)$. This is explicitly given by:
$$\SH(k)_{\geq 0} \coloneqq \{E \in SH(k) : \underline{\pi}_n(E)_m \cong 0 \: {\mathrm{for}} \:  n<0,m \in \Z\},$$
$$\SH(k)_{\leq 0} \coloneqq \{E \in SH(k) : \underline{\pi}_n(E)_m \cong 0 \: {\mathrm{for}}\: n>0,m \in \Z\},$$
where the motivic homotopy sheaf $\underline{\pi}_n(E)_m$ is the Nisnevich sheafification of the presheaf
$$\pi_n(E)_m(U) \coloneqq [\Sigma^{m+n,m}\Sigma^{\infty}_+U,E]_{\SH(k)}.$$

The heart of the homotopy $t$-structure $\SH(k)^{\heartsuit}$ consists of those spectra $E$ whose possibly non-vanishing motivic homotopy sheaves are only of the type $\underline{\pi}_0(E)_m$. Besides, $\SH(k)^{\heartsuit}$ is equivalent to the abelian category of homotopy modules described by Morel in \cite[Section 5.2]{M}.

We are now ready to show the main result of this section.

\begin{prop}\label{imk}
$\mbp^{iso}_k$ belongs to the heart of the homotopy $t$-structure. Moreover, the canonical map of ring spectra $\mbp^{iso}_k \rightarrow \hz^{iso}_k$ induces an isomorphism
$$\mbp_{iso}^{**}(F) \cong \km^M_*(F/k).$$
\end{prop}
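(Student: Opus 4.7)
Both assertions are proved simultaneously by computing $\mbp_{iso}^{**}(F)$ for each finitely generated field extension $F/k$: being in the heart of the homotopy $t$-structure is equivalent to the vanishing of $\underline{\pi}_n(\mbp^{iso}_k)_m$ for $n \neq 0$, a Nisnevich-local statement which can be checked on stalks and amounts to $\mbp_{iso}^{p,q}(F) = 0$ for $p \neq q$; the diagonal values then supply the second claim.

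The main tool is the decomposition from \cite[Lemma 9.1]{T},
$$\hz_{iso}^{**}(X) \cong \hz_{iso}^{**}(k) \otimes_{\Fi_2} \mbp_{iso}^{**}(X) = \Lambda_{\Fi_2}(r_i:i\geq 0) \otimes_{\Fi_2} \mbp_{iso}^{**}(X),$$
which extends to $F$ by taking a filtered colimit over the open subschemes of a smooth model with function field $F$. Each non-trivial monomial in the $r_i$'s sits strictly below the diagonal (since $q-p = \sum 2^i > 0$ for non-trivial products), so the only contribution to $\hz_{iso}^{n,n}(F)$ is from the unit summand tensored with $\mbp_{iso}^{n,n}(F)$; hence $\mbp_{iso}^{n,n}(F) \cong \hz_{iso}^{n,n}(F)$. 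On the diagonal, $\hz^{n,n}(F) \cong \km^M_n(F)$ by Nesterenko-Suslin, and the isotropic quotient kills precisely the classes in the ideal $I_k$ from the definition of $\km^M_*(F/k)$, leaving $\mbp_{iso}^{n,n}(F) \cong \km^M_n(F/k)$.

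For the off-diagonal vanishing, $\mbp_{iso}^{p,q}(F) = 0$ for $p > q$ follows from the corresponding vanishing of $\hz^{p,q}(F)$ via the $I = \emptyset$ summand of the decomposition. The case $p < q$ needs the full identification $\hz_{iso}^{**}(F) \cong \Lambda_{\Fi_2}(r_i) \otimes_{\Fi_2} \km^M_*(F/k)$, extending Vishik's isotropic computation from the base field \cite{V} to general finitely generated extensions; this is the step I expect to be the main obstacle. Granted it, comparing bigraded $\Fi_2$-dimensions in the decomposition forces $\mbp_{iso}^{**}(F) \cong \km^M_*(F/k)$, which, being concentrated on the diagonal, simultaneously places $\mbp^{iso}_k$ in the heart and completes the identification.
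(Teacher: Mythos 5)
Your comparison strategy is the same as the paper's: play the two descriptions of $\hz_{iso}^{**}(F)$ against each other, one coming from \cite[Lemma 9.1]{T} (which splits $\hz^{iso}_k$ into shifted copies of $\mbp^{iso}_k$ indexed by the monomials in the $r_i$), the other expressing $\hz_{iso}^{**}(F)$ through isotropic Milnor K-theory. The genuine gap is that you never establish the second description, and it is the entire content of the proposition. On the diagonal you assert in one sentence that ``the isotropic quotient kills precisely the classes in the ideal $I_k$'', i.e.\ that the kernel of $\km^M_n(F)\cong\hz^{n,n}(F)\rightarrow\hz_{iso}^{n,n}(F)$ is exactly the union of annihilators of non-zero pure symbols coming from $\km^M_*(k)$; this is not a formal consequence of Nesterenko--Suslin. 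It requires writing $\hz^{iso}_F$ as a colimit over non-zero pure symbols $\alpha$ in the image of $\km^M_*(k)\rightarrow\km^M_*(F)$ of $\widetilde{\mathcal{X}}_{Q_{\alpha}}\wedge\hz$, with $Q_{\alpha}$ the associated Pfister quadric, using flexibility of $k$ and \cite[Proposition 3.2]{V} to see that this colimit is filtered, and then controlling the contribution of the \v{C}ech simplicial schemes of Pfister quadrics in the style of \cite{OVV} and \cite{V}; the paper does exactly this, by rerunning the proof of \cite[Theorem 3.7]{V} over $F$ to get $\hz_{iso}^{**}(F)\cong\hz_{iso}^{**}(k)\otimes_{\Fi_2}\km^M_*(F/k)$. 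The below-diagonal vanishing ($p<q$), which you yourself call ``the main obstacle'' and then take as granted, is the other half of that same computation. So your proposal has the right skeleton but assumes the key input rather than proving it.

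Two smaller points. First, the claim that $\mbp_{iso}^{p,q}(F)=0$ for $p>q$ ``follows from the corresponding vanishing of $\hz^{p,q}(F)$'' is too quick: $\hz_{iso}^{p,q}(F)$ is not degreewise a subquotient of $\hz^{p,q}(F)$, since the defining cofiber sequence also brings in the $\hz$-homology of the \v{C}ech simplicial scheme, and one must check that those groups vanish in the relevant range (they do, by bounding $\hz_{a,b}$ of smooth projective varieties and hence of their \v{C}ech objects for $a<b$, but it needs saying). Second, your assertion that only the unit summand contributes to $\hz_{iso}^{n,n}(F)$ already presupposes the vanishing $\mbp_{iso}^{a,b}(F)=0$ for $a>b$, which you only address afterwards; the order of the argument should be reversed.
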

\begin{proof}
   For every finitely generated field extension $F/k$, by \cite[Corollary 3.3]{V}, we get an isomorphism
 $$\hz_{iso}^{m,n}(F) \coloneqq [\un,\Sigma^{m,n}p^*_F\hz^{iso}_k]_{\SH(F)}  \cong \colim_{\alpha}[\un,\Sigma^{m,n}(\widetilde{\mathcal{X}}_{Q_{\alpha}} \wedge \hz)]_{\SH(F)}$$
   where $\alpha$ runs over all non-zero pure symbols from $\km^M_*(k)$, $Q_{\alpha}$ is the associated Pfister quadric, and $\widetilde{\mathcal{X}}_{Q_{\alpha}}$ is the cofiber of the map $\Sigma^{\infty}_+\ce(Q_{\alpha}) \rightarrow \un$ in $\SH(F)$. 
   
   Since $k$ is a flexible field, the previous colimit is filtered, again by \cite[Corollary 3.3]{V}, and by reproducing the same arguments of the proof of \cite[Theorem 3.7]{V}, we obtain an isomorphism
    $$\hz_{iso}^{**}(F) \cong \hz_{iso}^{**}(k)\otimes_{\F} \km^M_*(F/k).$$
    
    On the other hand, we know by Remark \ref{funcfi} that
$$\hz_{iso}^{**}(F) \cong \hz_{iso}^{**}(k)\otimes_{\F} \mbp_{iso}^{**}(F).$$
    
    We conclude that the map of ring spectra $\mbp^{iso}_k \rightarrow \hz^{iso}_k$ induces an isomorphism $\mbp_{iso}^{p,p}(F) \cong\hz_{iso}^{p,p}(F) \cong \km^M_p(F/k)$ for all $p \in \Z$. Moreover, we notice that
    $$\underline{\pi}_{p-q}(\mbp^{iso}_k)_q(F) \cong \mbp_{iso}^{p,q}(F) \cong 0$$ 
    for $p \neq q$, and so $\mbp^{iso}_k$ belongs to the heart of the homotopy $t$-structure.
\end{proof}

\begin{rem}\label{prev}
\normalfont
   More precisely, Proposition \ref{imk} tells us that there is an equivalence
   $$\mbp_k^{iso} \cong \underline{\pi}_0(\hz^{iso}_k)$$
   from which it follows that $\mbp^{iso}_k$ is a Rost cycle module by \cite[Th\'eor\`eme 3.7]{D}. Moreover, for any smooth variety $X$ over $k$, we have an isomorphism
   $$\mbp_{iso}^{p,q}(X) \cong H^{p-q}_{Zar}(X,\underline{\pi}_0(\hz^{iso}_k)_{-q})\cong H^{p-q}_{Zar}(X,\underline{\km}^M_q(-/k)).$$
\end{rem}

\begin{prop}\label{van}
Let $X$ be a smooth variety over $k$ of dimension $d$. Then,
$\mbp_{iso}^{p,q}(X)\cong 0$ for $p > 2q$, $p < q$ and $p > q+d$.
\end{prop}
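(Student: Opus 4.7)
The plan is to exploit the cycle module description of $\mbp^{iso}_k$ that has just been established. Since $\mbp^{iso}_k \cong \underline{\pi}_0(\hz^{iso}_k)$ is the Eilenberg-MacLane spectrum for the Rost cycle module $\km^M_*(?/k)$ (by Proposition \ref{imk} and the remark following it), the isotropic $\mbp$-cohomology of a smooth variety $X$ of dimension $d$ is computable by a Gersten/Cousin-type resolution. Concretely, I would argue for the identification
$$\mbp_{iso}^{p,q}(X) \cong A^{p-q}(X, \km^M_q(?/k)),$$
namely the cohomology in position $p-q$ of the complex whose term in codimension $c$ is $\bigoplus_{x \in X^{(c)}} \km^M_{q-c}(\kappa(x)/k)$, for $0 \leq c \leq d$.

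Given this description, the three vanishings are immediate inspections of the complex. The vanishing for $p < q$ (respectively for $p > q+d$) follows because we are looking at the cohomology of a non-negative cochain complex in a negative position (respectively in a position strictly above the length $d$ of the complex). For $p > 2q$, the term at codimension $p-q$ involves $\km^M_{q-(p-q)}(\kappa(x)/k) = \km^M_{2q-p}(\kappa(x)/k)$ with $2q - p < 0$; since isotropic Milnor K-theory is a quotient of classical Milnor K-theory and the latter vanishes in negative degrees, this group is zero, so the cohomology there vanishes.

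The main obstacle is cleanly establishing the Gersten-type formula in the required form. All ingredients are in place: by the remark preceding the proposition, $\mbp^{iso}_k$ is a Rost cycle module via D\'eglise's Th\'eor\'eme 3.4 \cite{D}, and Rost's foundational theory produces a Gersten resolution computing cohomology with coefficients in a cycle module on a smooth variety. Assembling these yields the formula above. As an alternative route, one could use that $\mbp^{iso}_k$ is a direct summand of $\hz^{iso}_k$ (via the splitting recalled in the remark after Proposition \ref{rep}), reducing the claim to the analogous vanishings for isotropic motivic cohomology $\hz_{iso}^{p,q}(X)$, which can be extracted from classical motivic cohomology vanishing (Beilinson's $p \leq 2q$ vanishing, together with the Gersten complex for $\hz$) via the defining cofiber sequence for $\un^{iso}_k$ smashed with $\hz$.
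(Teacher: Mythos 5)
Your main argument is correct and is essentially the paper's proof: the paper runs the Gersten--Quillen (coniveau) spectral sequence for $\mbp^{iso}_k$, whose $E_1$-page is concentrated, by Proposition \ref{imk}, exactly on the single row you describe, so it degenerates onto your Gersten complex $\bigoplus_{x \in X^{(c)}} \km^M_{q-c}(\kappa(x)/k)$ and the three vanishings follow by the same inspection of positions $0 \leq p-q \leq d$ and of negative Milnor K-groups. One caution about your proposed alternative route: reducing to $\hz^{iso}$ via the splitting would not yield the $p<q$ vanishing, since $\hz_{iso}^{**}$ of (residue) fields is not concentrated on the diagonal but contains the classes $r_i \in \hz_{iso}^{-2^{i+1}+1,-2^i+1}(k)$ lying strictly below it, so isotropic motivic cohomology itself does not satisfy all three vanishing ranges; your primary cycle-module argument, however, does not need this.
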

\begin{proof}
We use the Gersten-Quillen spectral sequence for $\mbp^{iso}_k$, which explicitly looks as follows
$$E_1^{i,j} = \bigoplus_{x \in X^{(i)}}  \mbp^{j-i,q-i}_{iso}(\kappa(x)) \Longrightarrow \mbp^{j+i,q}_{iso}(X).$$

By Proposition \ref{imk}, the only non-trivial contribution of the $E_1$-page to $\mbp^{p,q}_{iso}(X)$ is $\mbp^{j-i,q-i}_{iso}(\kappa(x))$ for $j+i=p$ and $j-i=q-i$. This is simply $\km^M_{2q-p}(\kappa(x)/k)$ that vanishes for $p>2q$. The vanishing of $\mbp^{p,q}_{iso}(X)$ for $p<q$ and $p>q+d$ follows immediately from Remark \ref{prev}. This finishes the proof.
\end{proof}

\section{Isotropic Tate motives over finite extensions}

In this section, we focus on finite extensions. In particular, we identify the isotropic stable motivic homotopy category of a finite extension of odd degree with the isotropic stable motivic homotopy category of the trivial extension. This also allows to fully understand isotropic Tate motives over a finite extension.

We start with the following lemma about base change of anisotropic varieties along finite extensions of odd degree.

\begin{lem}\label{points}
Let $F/k$ be a finite separable extension of odd degree, let $X$ be a smooth variety over $F$ and let $Y$ be a smooth variety over $k$. Then, the following hold: 
\begin{enumerate}
\item $X$ is $k$-anisotropic mod 2 if and
only if $X$ is $F$-anisotropic mod 2,

\item if $Y$ is $k$-anisotropic mod 2, then $Y_F \coloneqq Y \times_{\spec(k)} \spec(F)$ is $F$-anisotropic mod 2.
\end{enumerate}
\end{lem}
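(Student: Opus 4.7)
The plan is to argue purely by multiplicativity of degrees in a tower of finite field extensions, using that an odd integer times an integer $n$ is even if and only if $n$ is even.

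For part 1, I would first note that an $F$-variety $X$ can be regarded as a $k$-variety via the composition $X\to\spec(F)\to\spec(k)$, and that the underlying topological space (in particular, the set of closed points) is unchanged by this reinterpretation. If $x$ is a closed point of $X$, then $\kappa(x)$ is a finite extension of $F$, and the tower formula gives
$$[\kappa(x):k] \;=\; [\kappa(x):F]\cdot[F:k].$$
Since $[F:k]$ is odd, $[\kappa(x):F]$ is even if and only if $[\kappa(x):k]$ is. Running this over all closed points of $X$ yields the biconditional. (Separability of $F/k$ serves only to ensure that $X$ remains smooth when viewed as a $k$-variety.)

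For part 2, I would pick a closed point $y\in Y_F$ and consider its image $y_0$ under the finite morphism $Y_F\to Y$ obtained by base change from $\spec(F)\to\spec(k)$. Since this morphism is finite, $y_0$ is a closed point of $Y$, and the induced map of residue fields gives an inclusion $\kappa(y_0)\hookrightarrow\kappa(y)$. Hypothesis on $Y$ then forces $[\kappa(y_0):k]$ to be even, and multiplicativity of degrees along $k\subset\kappa(y_0)\subset\kappa(y)$ shows that $[\kappa(y):k]$ is even. Applying the tower formula
$$[\kappa(y):k] \;=\; [\kappa(y):F]\cdot[F:k]$$
once more with $[F:k]$ odd forces $[\kappa(y):F]$ to be even, as desired.

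Neither part presents a real obstacle: the entire argument rests on the elementary observation that multiplication by an odd integer preserves parity. The only points to be careful about are the identification of the closed-point set of $X$ as an $F$-variety with its closed-point set as a $k$-variety in part 1, and the fact that $Y_F\to Y$, being finite, sends closed points to closed points in part 2.
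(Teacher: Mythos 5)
Your proposal is correct and follows essentially the same argument as the paper: both parts rest on multiplicativity of degrees in the towers $k\subset F\subset\kappa(x)$ and $k\subset\kappa(y_0)\subset\kappa(y)$ together with the oddness of $[F:k]$ (the paper phrases part 2 contrapositively, but the content is identical). Your extra remarks on identifying the closed points of $X$ over $F$ and over $k$, and on $Y_F\to Y$ being finite so closed points map to closed points, only make explicit what the paper leaves implicit.
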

\begin{proof}
Let $x$ be closed a point of $X$. Then, we have a sequence of extensions $k \rightarrow F \rightarrow \kappa(x)$. It follows that $\kappa(x)/F$ is a finite extension of odd degree if and only if $\kappa(x)/k$ is a finite extension of odd degree. This concludes the proof of (1).

For proving (2), Suppose that $Y_F$ has a closed point $x$ of odd degree. Then, we have a finite extension $F \rightarrow \kappa(x)$ of odd degree. It follows that also $\kappa(x)/k$ is finite of odd degree. Denote by $y$ the image of $x$ in $Y$. Then, $k \rightarrow \kappa(y) \rightarrow \kappa(x)$ is finite of odd degree from which it follows that $\kappa(y)/k$ is finite of odd degree, that is, $Y$ has a closed point of odd degree. This finishes the proof of (2).
\end{proof}

Before proceeding, we need the following well-known result (valid over any field $k$). We include a proof for completeness.

\begin{lem}\label{tech}
Let $X$ and $Y$ be smooth schemes over $k$. Then, the following are equivalent:
\begin{enumerate}
\item there is a simplicial weak equivalence $\ce(X)\simeq \ce(Y)$,
\item $X(R) \neq \emptyset$ if and only if $Y(R) \neq \emptyset$ for any Henselian local ring $R$ over $k$.
\item $X(L) \neq \emptyset$ if and only if $Y(L) \neq \emptyset$ for any field extension $L/k$.
\end{enumerate}
\end{lem}
\begin{proof}
	We start by proving that (1) implies (2). First, note that $\ce(X)(R)$ is the empty simplicial set if $X(R)= \emptyset$, and it is contractible otherwise, for any smooth scheme $X$ and any Henselian local ring $R$. Since it follows from (1) that $\ce(X)(R)\simeq \ce(Y)(R)$ is a weak equivalence for all Henselian local rings $R$, this immediately implies (2).
	
	To prove (2) $\Rightarrow$ (1), assume first that there is a map $X \rightarrow Y$. This induces a map $\ce(X) \rightarrow \ce(Y)$, and to show that this map is a simplicial weak equivalence we have to check that $\ce(X)(R) \rightarrow \ce(Y)(R)$ is a weak equivalence for all Henselian local rings $R$ over $k$. This immediately follows from (2). 
	
	If there is neither a map $X \rightarrow Y$ nor a map $Y \rightarrow X$, we can still consider the span $X \leftarrow X\times Y \rightarrow Y$. In this case (2) implies that $X(R) \neq \emptyset$ if and only if $(X\times Y)(R) \neq \emptyset$, from which it follows that $\ce(X\times Y) \rightarrow \ce(X)$ is a simplicial weak equivalence, by what we have shown before. Similarly, one proves that $\ce(X\times Y) \rightarrow \ce(Y)$ is a simplicial weak equivalence, which implies (1).
	
	To prove that (2) and (3) are equivalent, it is enough to show that $X(R) \neq \emptyset$ if and only if $X(\kappa) \neq \emptyset$, where $R$ is any Henselian local ring and $\kappa$ is its residue field. For this, we can assume that $X$ is affine, i.e. $X\cong \spec(A)$ for some smooth $k$-algebra $A$. Then, we have a surjection
	$$X(R) = {\mathrm{Hom}}_k(A,R) \twoheadrightarrow {\mathrm{Hom}}_k(A,\kappa)=X(\kappa)$$
	since $R$ is Henselian, which is exactly what we need to finish the proof.
	\end{proof}

Lemma \ref{points} and Lemma \ref{tech} can be directly applied to obtain an equivalence between the isotropic sphere spectrum of an odd degree extension $F/k$ and the isotropic sphere spectrum of the trivial extension $F/F$.

\begin{prop}\label{isoun}
 Let $F/k$ be a finite separable extension of odd degree. Then, $\un_{F/k} \cong \un^{iso}_F$ in $\SH(F)$.
\end{prop}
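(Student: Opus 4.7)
The plan is to identify both $\un_{F/k}$ and $\un_F^{iso}$ as units of the same smashing localization of $\SH(F)$. To begin, note that $p_F^*$ is symmetric monoidal and a left adjoint, so it preserves colimits, cofibers and $\Sigma^\infty_+$. This gives
$$\un_{F/k} = p_F^*\bigl(\cof(\Sigma^\infty_+ \ce(Q_k) \to \un)\bigr) \cong \cof(\Sigma^\infty_+ \ce((Q_k)_F) \to \un),$$
where $Q_k$ is the coproduct of all $k$-anisotropic smooth varieties and $(Q_k)_F$ denotes its base change to $F$. By Lemma \ref{points}.2, each component of $(Q_k)_F$ is $F$-anisotropic, so there is a canonical map $\ce((Q_k)_F) \to \ce(Q_F)$ of Čech simplicial schemes over $F$, inducing a canonical map $\un_{F/k} \to \un_F^{iso}$. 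The goal is to show this map is an equivalence.

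Since both spectra are defined as cofibers of maps of the form $\Sigma^\infty_+\ce(-) \to \un$, they are units of smashing localizations of $\SH(F)$. A spectrum $E \in \SH(F)$ is $\un_{F/k}$-local if and only if $E \wedge \Sigma^\infty_+ Y_F \simeq 0$ for every $k$-anisotropic smooth variety $Y$, while $E$ is $\un^{iso}_F$-local if and only if $E \wedge \Sigma^\infty_+ X \simeq 0$ for every $F$-anisotropic smooth variety $X$. The equivalence $\un_{F/k} \cong \un^{iso}_F$ then follows from showing that these two classes of local objects coincide. One inclusion is immediate from Lemma \ref{points}.2: if $Y$ is $k$-anisotropic, then $Y_F$ is $F$-anisotropic, so every $\un^{iso}_F$-local spectrum kills $\Sigma^\infty_+ Y_F$.

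For the reverse inclusion, let $X$ be any $F$-anisotropic smooth variety and view it as a $k$-variety via the composite $X \to \spec(F) \to \spec(k)$. Lemma \ref{points}.1 ensures that $X$ is then $k$-anisotropic. Using separability of $F/k$, the diagonal $\spec(F) \to \spec(F \otimes_k F)$ is clopen, so $F \otimes_k F \cong F \times R$ for some finite étale $F$-algebra $R$. Consequently, the base change of $X$ (regarded as $k$-variety) back to $F$ decomposes as $X \times_k \spec(F) \cong X \sqcup (X \times_F \spec(R))$, which exhibits $\Sigma^\infty_+ X$ as a direct summand of $\Sigma^\infty_+ (X \times_k \spec(F))$. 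If $E$ is $\un_{F/k}$-local, then $E \wedge \Sigma^\infty_+ (X \times_k \spec(F)) \simeq 0$, and hence $E \wedge \Sigma^\infty_+ X \simeq 0$. This shows $E$ is $\un^{iso}_F$-local and completes the identification of the two localizations, yielding $\un_{F/k} \cong \un^{iso}_F$.

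The main subtlety lies in the reverse inclusion: one needs to extract the $F$-anisotropic variety $X$ itself from its roundtrip base change, which relies crucially on the separability of $F/k$ to produce the splitting $F \otimes_k F \cong F \times R$. The odd-degree hypothesis, on the other hand, is precisely what powers Lemma \ref{points} in both directions.
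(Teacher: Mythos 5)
Your argument is correct, and its second half genuinely differs from the paper's. The first step (pulling back the defining cofiber sequence along $p_F^*$ and using Lemma \ref{points}.2 to map the base-changed anisotropic varieties into the $F$-anisotropic ones) is the same. For the converse, the paper stays at the level of \v{C}ech simplicial schemes: it reduces the claim to $\Sigma^\infty_+\ce(Q)\cong\Sigma^\infty_+\ce(P)$ and verifies the standard field-point criterion, producing an $L$-point of $X_F$ from an $L$-point of an $F$-anisotropic $X$ simply by the universal property of $X\times_k\spec(F)$ — no splitting of $F\otimes_k F$ is needed. You instead compare the two smashing localizations through their local objects and exhibit $\Sigma^\infty_+ X$ as a direct summand of $\Sigma^\infty_+(X\times_k\spec(F))$ via the clopen diagonal $F\otimes_k F\cong F\times R$; this buys a very concrete "retraction" mechanism, at the cost of invoking the (true but unproven in your text) standard fact that $E$ is local for the localization with unit $\cof(\Sigma^\infty_+\ce(Q)\to\un)$ if and only if $E\wedge\Sigma^\infty_+ Y_F\simeq 0$ for each component $Y_F$ of $Q$ — the paper's reliance on the point-support criterion for \v{C}ech objects is the analogous black box on its side. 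Two small points you should make explicit: separability is also what makes $X$, regarded over $k$, a \emph{smooth} $k$-variety (so that $X\times_k\spec(F)$ really occurs among the $Y_F$), and the identification of the two localizations yields $\un_{F/k}\cong\un^{iso}_F$ because both are obtained by applying the respective localization functor to $\un$, i.e.\ both are idempotent units of those localizations.
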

\begin{proof}
Denote by $Q$ the coproduct of $Y_F$ for all $k$-anisotropic varieties $Y$, and by $P$ the coproduct of all $F$-anisotropic varieties $X$. Since $p^*_F$ preserves both limits and colimits, being at the same time a right and a left adjoint, we deduce that there is an equivalence
$$\un_{F/k} \cong {\mathrm{cofib}}(\Sigma^{\infty}_+\ce(Q) \rightarrow \un) $$
in $\SH(F)$, while, by definition,
$$\un^{iso}_F \coloneqq {\mathrm{cofib}}(\Sigma^{\infty}_+\ce(P) \rightarrow \un).$$

Hence, it is enough to show that $\Sigma^{\infty}_+\ce(Q) \cong \Sigma^{\infty}_+\ce(P)$ in $\SH(F)$. For this, by Lemma \ref{tech}, it suffices to
prove that $Q(L) \neq \emptyset$ if and only if $P(L) \neq \emptyset$ for any field extension $L/F$. By Lemma \ref{points}, we have a map $Q \rightarrow P$, and so $Q(L) \neq \emptyset$ implies $P(L) \neq \emptyset$. On the other hand, suppose we have a morphism $\spec(L) \rightarrow X$ where $X$ is $F$-anisotropic. Since we also have a map $\spec(L) \rightarrow \spec(F)$, we finally obtain a map $\spec(L) \rightarrow X_F$, where $X$ is $k$-anisotropic by Lemma \ref{points}. Therefore, $P(L) \neq \emptyset$ implies $Q(L) \neq \emptyset$, and we are done. 
\end{proof}

\begin{cor}\label{relsh}
	For every finite separable extension of odd degree $F/k$, there is an equivalence of stable $\infty$-categories
	$$\SH(F/k) \simeq \SH_{}^{iso}(F).$$
\end{cor}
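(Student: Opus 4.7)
The plan is to reduce the corollary to Proposition \ref{isoun} and an enhancement of structure. By definition, $\SH(F/k) = \un_{F/k}\text{-}\Mod$ and $\SH^{iso}(F) = \un_F^{iso}\text{-}\Mod$, each being modules over an $E_{\infty}$-ring spectrum in $\SH(F)$: the $E_{\infty}$-structure on $\un_F^{iso}$ is provided by \cite[Proposition 6.1]{T1}, while $\un_{F/k} = p_F^*(\un_k^{iso})$ inherits an $E_{\infty}$-structure from $\un_k^{iso}$ via the symmetric monoidal pullback functor $p_F^*$. Since an equivalence of $E_{\infty}$-ring spectra induces a (symmetric monoidal) equivalence of their stable $\infty$-categories of modules, it suffices to promote the underlying equivalence produced by Proposition \ref{isoun} to an equivalence of $E_{\infty}$-rings.

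To achieve this, I would observe that both $\un_{F/k}$ and $\un_F^{iso}$ arise as the unit of a smashing Bousfield localization of $\SH(F)$: each is of the form $\mathrm{cofib}(\Sigma^{\infty}_+\ce(Z) \to \un)$ (for $Z=Q$ and $Z=P$ respectively), and such cofibers are idempotent $E_{\infty}$-algebras under $\un$. The content of Proposition \ref{isoun} is the $\SH(F)$-level equivalence $\Sigma^{\infty}_+\ce(Q) \simeq \Sigma^{\infty}_+\ce(P)$, which entails that the two smashing localizations have identical categories of local objects. By the essential uniqueness of smashing localizations determined by their class of local objects, the resulting idempotent $E_{\infty}$-algebras are canonically equivalent as $E_{\infty}$-rings in $\SH(F)$.

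Combining these two steps then produces the desired equivalence $\SH(F/k) \simeq \SH^{iso}(F)$. The main technical point, and the step I expect to require the most care, is precisely the upgrade from the plain equivalence in $\SH(F)$ afforded by Proposition \ref{isoun} to an equivalence compatible with the multiplicative structures; once this is secured by the smashing-localization argument above, the passage to module categories is a formal consequence of the functoriality of $(-)\text{-}\Mod$ on $E_{\infty}$-ring spectra.
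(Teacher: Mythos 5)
Your proof is correct and takes essentially the same route as the paper, which simply records the corollary as an immediate consequence of Proposition \ref{isoun}. Your additional step---upgrading the underlying equivalence of Proposition \ref{isoun} to an equivalence of $E_{\infty}$-rings by noting that both $\un_{F/k}$ and $\un^{iso}_F$ are idempotent algebras under $\un$ (units of smashing localizations with the same local objects)---is precisely the justification the paper leaves implicit, and it is valid since the equivalence constructed in Proposition \ref{isoun} is compatible with the unit maps.
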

\begin{proof}
	This is an immediate consequence of Proposition \ref{isoun}.
	\end{proof}

\begin{rem}\label{dah}
\normalfont
By considering cellular isotropic $\mbp$-modules, we deduce from Corollary \ref{relsh} and \cite[Theorem 7.4]{T} an equivalence of stable $\infty$-categories
$$\mbp_{F/k}-\Mod_{cell} \simeq \Fi_2-{\mathrm{Vec}}_{**}$$
for every separable finite extension $F/k$ of odd degree. 
\end{rem}

What we have proved so far implies the following description of the isotropic Milnor K-theory of a finite extension of odd degree.

\begin{prop}\label{vanfe}
If $F/k$ is a finite extension of odd degree, then $\km^M_*(F/k)\cong \km^M_*(F/F)\cong \km^M_*(k/k)$, that is $\F$ in degree 0, and 0 otherwise. In particular, it is a Koszul algebra.
\end{prop}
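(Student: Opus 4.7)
The plan is to derive the proposition from Remark~\ref{dah} and Proposition~\ref{imk}, handling the separable and inseparable parts of $F/k$ separately.

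Suppose first that $F/k$ is finite separable of odd degree. Remark~\ref{dah} supplies a symmetric monoidal equivalence $\mbp_{F/k}-\Mod_{cell} \simeq \F_2-\Vs_{**}$ sending the unit to the unit, so
\[
[\un,\Sigma^{m,n}\un]_{\mbp_{F/k}-\Mod} \;\cong\; \begin{cases}\F_2 & (m,n)=(0,0),\\ 0 & \text{otherwise.}\end{cases}
\]
The adjunction calculation of Proposition~\ref{rep}, carried out for the structure map $p_F$, identifies this Hom-group with $\mbp_{iso}^{m,n}(F)$. Restricting to the diagonal $m=n$ and invoking Proposition~\ref{imk} yields $\km^M_n(F/k) \cong \mbp_{iso}^{n,n}(F)$, which is $\F_2$ for $n=0$ and vanishes for $n>0$, as required.

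To remove the separability hypothesis, factor an arbitrary odd-degree finite extension as $F\supseteq F^s\supseteq k$ with $F^s/k$ separable and $F/F^s$ purely inseparable of odd degree $p^r$. The compositions $N_{F/F^s}\circ i^*$ and $i^*\circ N_{F/F^s}$ on mod~$2$ Milnor K-theory equal multiplication by $p^r$, which is invertible over $\F_2$, so the restriction map induces an isomorphism $\km^M_*(F^s)\xrightarrow{\sim}\km^M_*(F)$. Since this isomorphism is compatible with the image of $\km^M_*(k)$, it descends to $\km^M_*(F^s/k)\cong\km^M_*(F/k)$, reducing the statement to the already-treated separable case.

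The remaining identifications $\km^M_*(F/k)\cong\km^M_*(F/F)\cong\km^M_*(k/k)$ then follow because each side is canonically $\F_2$ in degree zero: for the trivial extension $k/k$, Vishik's computation \cite[Theorem 3.7]{V} shows that $\hz_{iso}^{**}(k)$ is an exterior algebra on classes that sit strictly off the diagonal, so Proposition~\ref{imk} forces its diagonal part to collapse to $\F_2$ concentrated in degree zero, and the same argument applies to $F/F$. Koszulity is then automatic, as the algebra $\F_2$ concentrated in degree zero is the tensor algebra on the empty generating set, coincides with its own Koszul dual coalgebra, and its Tor-coalgebra over itself is likewise $\F_2$ in degree zero; the comparison $A^{\text{!`}}\cong H_{**}(A)$ holds tautologically. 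The only mild subtlety in the whole argument is the inseparable reduction step; once the standard norm/restriction comparison is in place, everything else is essentially formal and draws on no input beyond Remark~\ref{dah} and Proposition~\ref{imk}.
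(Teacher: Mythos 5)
Your proof is correct and takes essentially the same route as the paper: the separable case rests on Proposition~\ref{isoun} (packaged as Remark~\ref{dah}) combined with Proposition~\ref{imk}, and inseparability is removed by the standard fact that restriction on mod~2 Milnor K-theory is an isomorphism for odd-degree purely inseparable extensions, which is exactly the paper's first case. The only (welcome) addition is that you factor a general odd-degree extension through its separable closure, explicitly covering the mixed case that the paper's separable/purely-inseparable dichotomy leaves implicit.
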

\begin{proof}
If $F/k$ is a purely inseparable extension, then $\km^M_*(F)\cong \km^M_*(k)$, from which the statement immediately follows.

On the other hand, if $F/k$ is a separable extension, then by Propositions \ref{isoun} and \ref{imk} we obtain the following chain of isomorphisms
\begin{align*}
\km^M_*(F/k)&\cong [\Sigma^{\infty}_+\spec(F),\Sigma^{**}\mbp_k^{iso}]_{\SH(k)} \\
&\cong [p_{F,\#}\un,\Sigma^{**}\mbp^{iso}_k]_{\SH(k)} \\
&\cong [\un,\Sigma^{**}p_F^*(\mbp^{iso}_k)]_{\SH(F)} \\
&\cong [\un_,\Sigma^{**}\mbp^{iso}_F]_{\SH(F)} \cong \km^M_*(F/F)    
\end{align*}
which concludes the proof.
\end{proof}

Proposition \ref{vanfe} immediately implies the following result about vanishing of Milnor K-groups of purely transcendental extensions of degree 1.

\begin{cor}\label{pte}
If $k(x)/k$ is a purely transcendental extension of degree 1 over a flexible field $k$, then $\km^M_p(k(x)/k)\cong 0$ for $p\geq 2$. In particular, $\km^M_*(k(x)/k)$ is a Koszul algebra.
\end{cor}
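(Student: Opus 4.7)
The plan is to translate the vanishing claim into isotropic $\mbp$-cohomology and evaluate it through a localization sequence on the affine line. First, Proposition \ref{imk} gives the identification $\km^M_p(k(x)/k) \cong \mbp_{iso}^{p,p}(k(x))$, and the remark at the end of Section 4 expresses $\mbp_{iso}^{p,p}(k(x))$ as the filtered colimit $\colim_U \mbp_{iso}^{p,p}(U)$ over non-empty opens $U \subseteq \mathbb{A}^1_k$. It therefore suffices to prove that $\mbp_{iso}^{p,p}(U) = 0$ for every such $U$ and every $p \geq 2$.

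For $U = \mathbb{A}^1_k \setminus S$ with $S$ a finite set of closed points, I would feed $U$ into the localization sequence
$$\mbp_{iso,S}^{p,q}(\mathbb{A}^1_k) \to \mbp_{iso}^{p,q}(\mathbb{A}^1_k) \to \mbp_{iso}^{p,q}(U) \to \mbp_{iso,S}^{p+1,q}(\mathbb{A}^1_k),$$
combined with the purity isomorphism $\mbp_{iso,S}^{p,q}(\mathbb{A}^1_k) \cong \bigoplus_{y \in S} \mbp_{iso}^{p-2,q-1}(\kappa(y))$, available because $\mbp^{iso}_k$ inherits an orientation from $\mbp$. Specializing to $p = q$, Proposition \ref{imk} forces $\mbp_{iso,S}^{q,q}(\mathbb{A}^1_k) = 0$ (the residue-field groups live only in bidegrees $(n,n)$, and here $p - 2 \neq q - 1$), while the next term $\mbp_{iso,S}^{q+1,q}(\mathbb{A}^1_k) \cong \bigoplus_{y \in S} \km^M_{q-1}(\kappa(y)/k)$ vanishes for $q \geq 2$ because each $\kappa(y)/k$ is a finite extension, to which Proposition \ref{vanfe} (odd degree) or Remark \ref{isoan} (even degree) applies. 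Finally, $\mathbb{A}^1$-invariance combined with Proposition \ref{vanfe} at $F = k$ yields $\mbp_{iso}^{q,q}(\mathbb{A}^1_k) \cong \km^M_q(k/k) = 0$ for $q \geq 1$. The localization sequence then collapses to $0 \to 0 \to \mbp_{iso}^{q,q}(U) \to 0$, so $\mbp_{iso}^{q,q}(U) = 0$ for $q \geq 2$, and passing to the colimit gives $\km^M_p(k(x)/k) = 0$ for $p \geq 2$.

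For the Koszulity assertion, Proposition \ref{struc} presents $\km^M_*(k(x)/k)$ as $T(V)/R$ with $V = \km^M_1(k(x)/k)$ and $R$ generated by the quadratic relations $a \otimes (1-a)$; since the degree $2$ part of the quotient vanishes by the first half, the relations must exhaust $V \otimes V$, i.e.\ $R = V \otimes V$. The Koszul dual is then the tensor algebra $T(V^*)$, which is Koszul, and since a quadratic algebra is Koszul if and only if its quadratic dual is, this closes the argument.

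The step I anticipate requiring the most care is essentially bookkeeping rather than a genuine obstacle: confirming that the localization-plus-purity formalism in $\SH(k)$ applies cleanly to $\mbp^{iso}_k$ pulled back to $\mathbb{A}^1_k$ when the closed points in $S$ have inseparable residue fields. Because we work with $\Fi_2$-coefficients in $\mathrm{char}(k) \neq 2$, purely inseparable parts of the residue extensions do not alter the mod $2$ Milnor K-groups that feed into the computation, so this causes no real obstruction.
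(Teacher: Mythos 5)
Your argument is, in substance, the same computation the paper performs, just dressed in Gysin triangles rather than cycle modules: the paper observes that $\mbp^{iso}_k$ lies in the heart of the homotopy $t$-structure and is therefore a Rost cycle module (via $\mbp^{iso}_k \cong \underline{\pi}_0(\hz^{iso}_k)$ and D\'eglise), and then quotes Rost's homotopy property \cite[Proposition 2.2]{R}, which packages exactly your localization-plus-purity-plus-homotopy-invariance computation on $\mathbb{A}^1_k$ into the short exact sequence $0 \to \km^M_*(k/k) \to \km^M_*(k(x)/k) \to \bigoplus_{F/k\ \mathrm{finite}} \km^M_{*-1}(F/k) \to 0$, after which the vanishing of the two outer terms for $*\geq 2$ is Proposition \ref{vanfe} (with Remark \ref{isoan} for even-degree residue extensions) --- precisely the inputs you feed into your sequence. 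Your Koszulity argument is correct and in fact more explicit than the paper's: quadraticity from Proposition \ref{struc} plus the vanishing in degree $2$ forces $R = V\otimes V$, and $T(V)/(V\otimes V)$ is Koszul (its reduced bar differential vanishes, so $H_{**}$ is the full tensor coalgebra, which is the quadratic dual coalgebra).

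The one genuine gap is the purity step. Morel--Voevodsky purity identifies cohomology with support in a closed subscheme with a Thom space only when that subscheme is smooth over $k$; a closed point $y\in\mathbb{A}^1_k$ with $\kappa(y)/k$ inseparable --- which genuinely occurs, since a flexible field in odd positive characteristic is never perfect --- is not smooth over $k$, so the isomorphism $\mbp_{iso,S}^{p,q}(\mathbb{A}^1_k)\cong\bigoplus_{y\in S}\mbp_{iso}^{p-2,q-1}(\kappa(y))$ is not available off the shelf. Your closing remark that inseparability does not change mod $2$ Milnor K-groups addresses the wrong issue: the problem is not the value of the groups at $\kappa(y)$, but whether cohomology with support is computed by them at all. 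The repair is exactly the paper's move: work at the level of the cycle module $\underline{\pi}_0$ (equivalently, use the Gersten--Quillen/coniveau machinery already invoked in Proposition \ref{van}), where residue maps at all discrete valuations, separable or not, are part of the structure and the $\mathbb{A}^1$-sequence holds by Rost's axioms. With that substitution your argument goes through.
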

\begin{proof}
By the homotopy property of Rost cycle modules \cite[Proposition 2.2]{R}, we have a short exact sequence of isotropic Milnor K-groups
$$0 \rightarrow \km^M_*(k/k) \rightarrow \km^M_*(k(x)/k) \rightarrow \bigoplus_{F/k \: {\mathrm{finite}}} \km^M_{*-1}(F/k)\rightarrow 0.$$

Then, the vanishing of the middle term follows from the vanishing of the two side terms for $* \geq 2$ given by Proposition \ref{vanfe} and Remark \ref{evenext}.
\end{proof}

\begin{rem}
	\normalfont
Corollary \ref{pte} provides also some information about the structure of Milnor K-theory mod 2 of flexible fields. In particular, it says that every 2-symbol in $\km^M_*(k(x))$ belongs to the annihilator of some non-zero pure symbol in $\km^M_*(k)$. We expect this to be true more generally, that is, every $n+1$-symbol in $\km^M_*(k(x_1,\dots,x_n))$ is expected to belong to the annihilator of some non-zero pure symbol in $\km^M_*(k)$.
\end{rem}

\begin{cor}
	Let $X$ be a smooth variety over $k$ of dimension $d$. Then,
	$\mbp_{iso}^{2d+q,d+q}(X)\cong 0$ for $q>0$.
\end{cor}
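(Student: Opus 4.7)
The plan is to exploit the Gersten-Quillen spectral sequence as in the proof of Proposition \ref{van}, but now track the boundary bidegree $(p,q') = (2d+q, d+q)$ which sits exactly on the edge $p = q'+d$ that Proposition \ref{van} does not rule out. The additional input I need is Proposition \ref{vanfe}, which gives vanishing of isotropic Milnor K-theory in positive degrees over finite extensions, since the residue fields at closed points of $X$ are finite over $k$.

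Explicitly, I would start with
$$E_1^{i,j} = \bigoplus_{x \in X^{(i)}} \mbp^{j-i, (d+q)-i}_{iso}(\kappa(x)) \Longrightarrow \mbp^{j+i, d+q}_{iso}(X).$$
By Proposition \ref{imk}, the term $\mbp^{j-i, (d+q)-i}_{iso}(\kappa(x))$ is non-zero only when $j-i = (d+q)-i$, i.e.\ $j = d+q$. Combined with $j+i = 2d+q$, this forces $i = d$, so the only potential contribution comes from closed points, and reads
$$E_1^{d, d+q} = \bigoplus_{x \in X^{(d)}} \km^M_q(\kappa(x)/k).$$

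For any closed point $x$, the residue field $\kappa(x)$ is a finite extension of $k$. If $[\kappa(x):k]$ is even, then $\km^M_*(\kappa(x)/k) \cong 0$ by Remark \ref{isoan}; if $[\kappa(x):k]$ is odd, then by Proposition \ref{vanfe} we have $\km^M_*(\kappa(x)/k) \cong \F$ concentrated in degree $0$. In either case $\km^M_q(\kappa(x)/k) = 0$ for $q > 0$, so $E_1^{d, d+q} = 0$ and therefore $\mbp^{2d+q, d+q}_{iso}(X) \cong 0$. No real obstacle arises: the whole argument is a direct combination of Propositions \ref{imk} and \ref{vanfe} applied in the boundary case of Proposition \ref{van}.
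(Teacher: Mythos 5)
Your proof is correct and follows essentially the same route as the paper: analyse the Gersten--Quillen spectral sequence, use the diagonal concentration from Proposition \ref{imk} to force $i=d$, and identify the surviving term with $\km^M_q(\kappa(x)/k)$, which vanishes for $q>0$ because $\kappa(x)/k$ is finite. Your explicit case split between even degree (Remark \ref{isoan}) and odd degree (Proposition \ref{vanfe}) residue fields is in fact slightly more careful than the paper's one-line appeal to Proposition \ref{vanfe}.
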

\begin{proof}
It follows from Proposition \ref{vanfe} and Remark \ref{evenext} by analysing the Gersten-Quillen spectral sequence for $\mbp^{iso}_k$. In fact, the non-trivial contribution of the $E_1$-page to $\mbp_{iso}^{2d+q,d+q}(X)$ is given by $\mbp^{j-i,d+q-i}_{iso}(\kappa(x))$ for $j+i=2d+q$ and $j-i=d+q-i$. This implies $i=d$, and so $$\mbp^{j-i,d+q-i}_{iso}(\kappa(x)) \cong \mbp^{q,q}_{iso}(\kappa(x)) \cong \km^M_q(\kappa(x)/k)$$ that vanishes for $q>0$ since $\kappa(x)/k$ is a finite extension. 
\end{proof}

\section{Isotropic motivic fundamental groups}

We are now ready to prove the main theorem of this paper that applies Levine's formalism of categories of Tate type to cellular isotropic $\mbp$-modules. This endows $\mbp^{iso}_X-\Mod_{cell}^{\omega}$ with a motivic $t$-structure whose heart is a Tannakian category. Studying the associated Tannaka group is the main purpose of these last sections.

\begin{thm}\label{main}
	For every smooth isotropic variety $X$ over $k$, there is a non-degenerate $t$-structure on $\mbp^{iso}_X-\Mod_{cell}^{\omega}$ whose heart $\mbp^{iso}_X-\Mod_{cell}^{\omega,\heartsuit}$ is a neutral Tannakian category with fiber functor given by
	$$\bigoplus_n \gr_n^W:\mbp^{iso}_X-\Mod_{cell}^{\omega,\heartsuit} \rightarrow \Fi_2-\vs.$$
\end{thm}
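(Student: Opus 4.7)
The plan is to reduce the theorem to Levine's existence result, Theorem \ref{tann}, by showing that $\mbp^{iso}_X-\Mod_{cell}$ is an $\Fi_2$-linear category of Tate type in the sense of Definition \ref{lctt} and also satisfies the Beilinson-Soul\'e vanishing condition. Once both are verified, Theorem \ref{tann} will immediately yield the non-degenerate $t$-structure, identify the heart as a neutral Tannakian category, and provide the stated fiber functor. I set $\un := \mbp^{iso}_X$ and $\un(n) := \Sigma^{0,n}\mbp^{iso}_X$; compact generation of $\mbp^{iso}_X-\Mod_{cell}$ by shifts of the $\un(n)$'s is automatic from the definition of the cellular subcategory. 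The crucial translation is then Proposition \ref{rep}, which identifies
$$[\un(n),\Sigma^l\un(m)]_{\mbp^{iso}_X-\Mod_{cell}} \cong \mbp_{iso}^{l,\,m-n}(X),$$
so every axiom I must check becomes a vanishing or computation statement for isotropic $\mbp$-cohomology of $X$ in a specific bidegree.

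The verification is a direct application of Proposition \ref{van}, which asserts that $\mbp_{iso}^{p,q}(X)$ vanishes whenever $p < q$, $p > 2q$, or $p > q + \dim X$. For condition (1) of Definition \ref{lctt}, I take $n > m$ and put $q := m - n < 0$; any $l \in \Z$ then satisfies either $l < q$, handled by the $p<q$ range, or $l \geq q > 2q$, handled by the $p>2q$ range. For condition (2), with $n = m$ and $l \neq 0$, the cases $l < 0$ and $l > 0$ are covered by the $p<q$ and $p>2q$ vanishings respectively. For condition (3), the identification $\mbp_{iso}^{0,0}(X) \cong \Fi_2$, I intend to use the isomorphism $\mbp_{iso}^{2q,q}(X) \cong \mathrm{CH}^q_{num}(X)/2$ recorded after Proposition \ref{imk} specialized to $q = 0$, combined with the fact that $X$ is an irreducible smooth isotropic variety, so that $\mathrm{CH}^0(X)/2 \cong \Fi_2$. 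Finally the Beilinson-Soul\'e condition, namely vanishing of $\mbp_{iso}^{l,q}(X)$ for $q > 0$ and $l \leq 0$, falls squarely in the $p < q$ range of Proposition \ref{van}.

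The substantive content of the argument has in fact already been absorbed into Section 5: it consists precisely of the cohomological vanishing of Proposition \ref{van} and the identification of the diagonal part of isotropic $\mbp$-cohomology with isotropic Milnor $K$-theory. With these in hand, the present theorem becomes a formal invocation of Theorem \ref{tann}, and I do not expect any additional technical obstacle. The only subtlety worth flagging is that condition (3) genuinely requires the isotropy hypothesis on $X$, since otherwise the endomorphism ring of the unit would vanish and the Tannakian framework would degenerate.
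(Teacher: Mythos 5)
Your overall strategy is exactly the paper's: use Proposition \ref{rep} to translate the axioms of Definition \ref{lctt} and the Beilinson--Soul\'e condition into vanishing statements for $\mbp_{iso}^{**}(X)$, check them with Proposition \ref{van}, and invoke Theorem \ref{tann}. Your treatment of conditions (1), (2) and of the Beilinson--Soul\'e condition matches the paper's proof.

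The one place you diverge is condition (3), and your route there is shakier than necessary. You appeal to the identification $\mbp_{iso}^{2q,q}(X)\cong \mathrm{CH}^q_{num}(X)/2$ (which, incidentally, is the remark following Proposition \ref{rep}, not Proposition \ref{imk}); but that identification comes from Vishik's theorem on smooth \emph{projective} varieties, and numerical equivalence is not even defined for the open varieties (such as $\Pone\setminus S$ or $\gmn$) to which Theorem \ref{main} is applied later. Moreover, even granting it, your justification ``$X$ irreducible, so $\mathrm{CH}^0(X)/2\cong\Fi_2$'' addresses rational equivalence rather than the numerical quotient: the nontrivial point is precisely that the fundamental class is not numerically trivial mod $2$, i.e.\ that $X$ carries a closed point of odd degree, which is where the isotropy hypothesis you flag actually enters. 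The paper avoids all of this by reading $\mbp_{iso}^{0,0}(X)$ off the same Gersten--Quillen analysis that proves Proposition \ref{van}: the only contribution in bidegree $(0,0)$ comes from the generic point and equals $\km^M_0(\kappa(X)/k)$, which is $\Fi_2$ exactly because $\kappa(X)/k$ is an isotropic extension (and would vanish otherwise, by Remark \ref{isoan}). Substituting this observation for your numerical-Chow-group argument closes the only soft spot; the rest of your proposal coincides with the paper's proof.
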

\begin{proof}
	By definition, $\mbp^{iso}_X-\Mod_{cell}$ is compactly generated by Tate twists of the unit $\un(n)\coloneqq \Sigma^{0,n}\un$, so we only need to check the 3 properties from Definition \ref{lctt}, and the Beilinson-Soul\'e vanishing condition.
	
	By Propositions \ref{rep} and \ref{van} we know that
	$$[\un(n),\Sigma^l\un(m)]_{\mbp^{iso}_X-\Mod}\cong \mbp_{iso}^{l,m-n}(X) \cong 0$$
	for all $l \in \Z$ and $m <n$,
	$$[\un(n),\Sigma^l\un(n)]_{\mbp^{iso}_X-\Mod}\cong \mbp_{iso}^{l,0}(X) \cong 0$$
	for $l \neq 0$ and all $n \in \Z$,
	$$[\un(n),\un(n)]_{\mbp^{iso}_X-\Mod}\cong \mbp_{iso}^{0,0}(X) \cong \km^M_{0}(\kappa(X)/k) \cong \Fi_2$$
	for all $n \in \Z$, and 
	$$[\un(n),\Sigma^l\un(m)]_{\mbp^{iso}_X-\Mod}\cong \mbp_{iso}^{l,m-n}(X) \cong 0$$
	for all $m >n$ and $l \leq 0$. Hence, $\mbp^{iso}_X-\Mod_{cell}$ is an $\Fi_2$-linear stable $\infty$-category of Tate type satisfying the Beilinson-Soul\'e vanishing condition. Then, we conclude by Theorem \ref{tann}.
\end{proof}

We have also the following analogous statement for finitely generated isotropic field extensions.

\begin{thm}
	For every finitely generated isotropic field extension $F/k$, there is a non-degenerate $t$-structure on $\mbp_{F/k}-\Mod_{cell}^{\omega}$ whose heart $\mbp_{F/k}-\Mod_{cell}^{\omega,\heartsuit}$ is a neutral Tannakian category with fiber functor given by
	$$\bigoplus_n \gr_n^W:\mbp_{F/k}-\Mod_{cell}^{\omega,\heartsuit} \rightarrow \Fi_2-\vs.$$
\end{thm}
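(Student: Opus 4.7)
The plan is to mirror the strategy used for Theorem \ref{main}: verify that $\mbp_{F/k}-\Mod_{cell}$ is an $\Fi_2$-linear symmetric monoidal stable $\infty$-category of Tate type in the sense of Definition \ref{lctt}, check the Beilinson-Soul\'e vanishing condition, and then invoke Theorem \ref{tann}. Compact generation by the Tate twists $\un(n) = \Sigma^{0,n}\mbp_{F/k}$, $n \in \Z$, is automatic from the definition of $\mbp_{F/k}-\Mod_{cell}$, since the cohomological shifts $\Sigma^m$ are already built into the stable structure.

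First I would rerun the adjunction argument of Proposition \ref{rep}, using the structure map $p_F:\spec(F) \to \spec(k)$ in place of $p_X$, to obtain
$$[\un(n),\Sigma^l\un(m)]_{\mbp_{F/k}-\Mod} \cong \mbp_{iso}^{l,m-n}(F).$$
Next I would apply Proposition \ref{imk}, which identifies $\mbp_{iso}^{**}(F)$ with the isotropic Milnor K-ring $\km^M_*(F/k)$ and shows that it is concentrated on the diagonal. Hence $\mbp_{iso}^{l,m-n}(F)$ vanishes except possibly when $l = m-n \geq 0$, where it equals $\km^M_{m-n}(F/k)$. From this, conditions (1) and (2) of Definition \ref{lctt} and the Beilinson-Soul\'e condition follow in one stroke, indeed more directly than in Theorem \ref{main}: for $n>m$ the target index $m-n$ is negative and Milnor K-theory vanishes; for $m=n$ and $l \neq 0$ one has $l \neq m-n$; and for $n<m$ with $l \leq 0$ the pair $(l, m-n)$ lies off the diagonal.

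The only place the isotropy hypothesis enters is condition (3), namely $[\un(n),\un(n)] \cong \mbp_{iso}^{0,0}(F) \cong \km^M_0(F/k) \cong \Fi_2$. To settle this I would unwind the definition of the ideal $I_k \subset \km^M_*(F)$. Since $F/k$ is isotropic, the map $\km^M_*(k) \hookrightarrow \km^M_*(F)$ is injective, so the images of nonzero pure symbols from $\km^M_*(k)$ are nonzero elements living in strictly positive degrees of $\km^M_*(F)$; their annihilators in degree $0$ are trivial because $\Fi_2$ has no zero divisors. Consequently $I_k$ contributes nothing in degree $0$ and $\km^M_0(F/k) = \km^M_0(F) = \Fi_2$.

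I view this last step as the main, if modest, obstacle. Without the isotropy assumption the category $\mbp_{F/k}-\Mod_{cell}$ would collapse in accordance with Remark \ref{isoan}, so the hypothesis has to be translated into the degree-$0$ nonvanishing of $\km^M_*(F/k)$. Once all four conditions are in place, Theorem \ref{tann} immediately delivers the non-degenerate $t$-structure and the neutral Tannakian heart with fiber functor $\bigoplus_n \gr_n^W$.
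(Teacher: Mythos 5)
Your proposal is correct and follows essentially the same route as the paper, whose proof of this statement is literally ``the same as Theorem \ref{main}'': verify the Tate-type axioms and Beilinson--Soul\'e vanishing via $[\un(n),\Sigma^l\un(m)]\cong\mbp_{iso}^{l,m-n}(F)$ and the diagonal concentration from Proposition \ref{imk}, then apply Theorem \ref{tann}. Your extra observation that isotropy of $F/k$ is exactly what guarantees $\km^M_0(F/k)\cong\Fi_2$ (condition (3)) is a correct and welcome filling-in of a detail the paper leaves implicit.
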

\begin{proof}
	The proof is the same of Theorem \ref{main}. 
\end{proof}

We denote by $\giso(X)$ the Tannaka group of $\mbp^{iso}_X-\Mod_{cell}^{\omega,\heartsuit}$, and by ${\mathcal{G}}(F/k)$ the Tannaka group of $\mbp_{F/k}-\Mod_{cell}^{\omega,\heartsuit}$. In other words, there are equivalences of abelian categories
$$\mbp^{iso}_X-\Mod_{cell}^{\omega,\heartsuit} \simeq {\mathrm{rep}}_{\Fi_2}(\giso(X)),$$
and similarly
$$\mbp_{F/k}-\Mod_{cell}^{\omega,\heartsuit} \simeq {\mathrm{rep}}_{\Fi_2}({\mathcal{G}}(F/k)).$$

\begin{prop}
	For every smooth isotropic variety $X$ over $k$, there is a split short exact sequence of pro-algebraic groups over $\Fi_2$
	$$1 \rightarrow \uiso(X) \rightarrow \giso(X) \rightarrow \gm \rightarrow 1$$
	where $\uiso(X)$ is pro-unipotent. 
\end{prop}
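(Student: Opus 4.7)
The plan is to construct a projection $\phi \colon \giso(X) \to \gm$ and a section $s \colon \gm \to \giso(X)$ directly from the Tate-type structure of $\cat \coloneqq \mbp^{iso}_X-\Mod_{cell}^{\omega,\heartsuit}$, and then to derive pro-unipotence of the kernel from the weight filtration.

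First, I would construct $\phi$ using the rank-one object $\un(1)$: for any $\Fi_2$-algebra $R$ and any tensor automorphism $g$ of $\omega \otimes R$, set $\phi(g) \coloneqq g_{\un(1)} \in \mathrm{Aut}_R(\omega(\un(1)) \otimes R) \cong R^{\times}$. Tensor-compatibility forces $g_{\un(n)} = \phi(g)^n$, so $\phi$ is a well-defined morphism of pro-algebraic group schemes over $\Fi_2$. For the section $s$, I would use the canonical $\Z$-grading of the fiber functor $\omega = \bigoplus_n \gr_n^W$: for $\lambda \in \gm(R)$, let $s(\lambda)$ act on $\gr_n^W(M) \otimes R$ by $\lambda^{-n}$. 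This is a tensor natural automorphism of $\omega$ because the weight filtration obeys the convolution formula $\gr_n^W(M \otimes N) = \bigoplus_{a+b=n} \gr_a^W M \otimes \gr_b^W N$, which follows formally from $\un(a) \otimes \un(b) \simeq \un(a+b)$ and the construction of the weight $t$-structure from the Tate generators. Since $\un(1)$ is pure of weight $-1$, one has $s(\lambda)_{\un(1)} = \lambda$, so $\phi \circ s = \mathrm{id}_{\gm}$. Setting $\uiso(X) \coloneqq \ker(\phi)$, we obtain the desired split short exact sequence in the category of pro-algebraic groups over $\Fi_2$.

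It remains to show that $\uiso(X)$ is pro-unipotent, which is equivalent to every finite-dimensional representation being an iterated extension of the trivial representation. Under the equivalence $\cat \simeq \mathrm{rep}_{\Fi_2}(\giso(X))$ of Remark \ref{uts}, the key step is to identify the simple objects of $\cat$: for any nonzero $M \in \cat$, let $a$ be the smallest integer with $W^{\leq a} M \neq 0$. Then $W^{\leq a} M \simeq \gr_a^W M$, which lies in $\cat^{\omega}_a$ and, being in the heart, is concentrated in cohomological degree $0$, hence is a direct sum of copies of $\un(-a)$. Simplicity of $M$ therefore forces $M \simeq \un(-a)$. Each Tate twist $\un(n)$ corresponds to the 1-dimensional representation on which $\giso(X)$ acts through $\phi^n \colon \giso(X) \to \gm \to \gm$, and thus restricts to the trivial representation of $\uiso(X) = \ker(\phi)$. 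Combined with the weight filtration from the Remark following Theorem \ref{tann}, which expresses every object of $\cat$ as an iterated extension of pure Tate objects, this shows that every representation of $\uiso(X)$ is a successive extension of trivial representations, proving pro-unipotence.

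The main care point in this plan is verifying the tensor-compatibility of $s$, i.e., the convolution formula for $\gr^W$ on tensor products. I expect this to follow formally from the construction of the weight structure out of the compact generators $\un(n)$ together with the relations $\un(a) \otimes \un(b) \simeq \un(a+b)$, but it is the one step in the argument that is not purely group-theoretic and must be inherited from the definitions in Section 3.
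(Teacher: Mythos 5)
Your proof is correct, and it produces the same two homomorphisms as the paper, but by a different construction. The paper argues functorially through the base field: it first observes that $\giso(k)\cong\gm$ (since $\mbp^{iso}_k-\Mod_{cell}^{\omega}$ is just finite bigraded $\Fi_2$-vector spaces), then uses that $p_X^*$ is $t$-exact and weight-preserving, so that the composite ${\mathrm rep}_{\Fi_2}(\gm)\xrightarrow{p_X^*}{\mathrm rep}_{\Fi_2}(\giso(X))\xrightarrow{\bigoplus_n\gr^W_n}{\mathrm rep}_{\Fi_2}(\gm)$ is an equivalence; Tannaka duality then gives the surjection $\giso(X)\rightarrow\gm$ together with its section. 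You instead construct the character $\phi$ by evaluating a tensor automorphism on the invertible object $\un(1)$ and the cocharacter $s$ from the weight grading of the fiber functor; unwinding Tannaka duality these are exactly the paper's maps, since the objects pulled back from $k$ are precisely the finite sums of Tate twists (so the map induced by $p_X^*$ is determined by its value on $\un(1)$), and the splitting induced by $\bigoplus_n\gr^W_n$ is the grading cocharacter. What your route buys is that it is purely internal to any Tate-type category satisfying the Beilinson-Soul\'e vanishing condition, with no appeal to $p_X^*$ or to the computation of $\giso(k)$; what the paper's route buys is that the tensor-compatibility is packaged once and for all into the statement that $\bigoplus_n\gr^W_n$ is a tensor functor to graded vector spaces. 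That statement is exactly the convolution formula you flag as your main care point, so both arguments rest on the same multiplicativity of the weight filtration coming from Levine's formalism; neither can avoid it, and it does hold there since the weight truncations are built from the generators $\un(n)$ and $\un(a)\otimes\un(b)\simeq\un(a+b)$. Your pro-unipotence argument (weight-graded pieces are sums of Tate twists, on which $\ker\phi$ acts trivially, and the weight filtration exhibits every object of the heart as an iterated extension of such) is the same as the paper's and at the same level of detail; strictly, to conclude for arbitrary finite-dimensional $\uiso(X)$-representations rather than restrictions from $\giso(X)$ one should add that every representation of a closed subgroup embeds into the restriction of one of the ambient group, a standard point the paper elides as well.
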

\begin{proof}
	First, note that, since $\mbp^{iso}_k-\Mod_{cell}^{\omega} \simeq \Fi_2-{\mathrm{vec}}_{**}$, the Tannaka group $\giso(k)$ is simply $\gm$. In this case, the fiber functor
	$$\bigoplus_n \gr_n^W:\mbp^{iso}_k-\Mod_{cell}^{\omega,\heartsuit} \rightarrow \Fi_2-\vs$$
	associated with the $t$-structure and weight filtration guaranteed by Theorem \ref{main} is the functor from graded $\Fi_2$-vector spaces to $\Fi_2$-vector spaces that forgets the grading provided by the Tate twist.
	
	On the other hand, the functor $p_X^*$ is $t$-exact with respect to the $t$-structure in Theorem \ref{main} and also preserves the weight filtration. Therefore, $p_X^*$ restricts on the hearts, and the fiber functor for $X$ restricts to the fiber functor for $k$. In other words, we get a composite 
	$${\mathrm{rep}}_{\Fi_2}(\gm) \xrightarrow{p_X^*} {\mathrm{rep}}_{\Fi_2}(\giso(X)) \xrightarrow{\bigoplus_n \gr_n^W} {\mathrm{rep}}_{\Fi_2}(\gm),$$
    that is an equivalence. This induces a map $\giso(X) \rightarrow \gm$ of pro-algebraic groups over $\Fi_2$ endowed with a splitting. Let $\uiso(X)$ be the kernel of $\giso(X) \rightarrow \gm$. Then, we get a split short exact sequence
	$$1 \rightarrow \uiso(X) \rightarrow \giso(X) \rightarrow \gm \rightarrow 1.$$
	
	Under the equivalences
	$$\mbp^{iso}_X-\Mod_{cell}^{\omega,\heartsuit} \simeq {\mathrm{rep}}_{\Fi_2}(\giso(X)) \simeq {\mathrm{gr.rep}}_{\Fi_2}(\uiso(X)),$$ 
	an object in $\mbp^{iso}_X-\Mod_{cell}^{\omega,\heartsuit}$ corresponds to a graded finite-dimensional $\Fi_2$-vector space $V=\bigoplus_{i=m}^nV_i$, where the grading encodes the motivic weight, endowed with an action of $\uiso(X)$. Since this action is trivial on each graded component $V_i$, it follows that $\uiso(X)$ is pro-unipotent, which completes the proof.
\end{proof}

\begin{rem}
	\normalfont
	By the same arguments, the previous result holds also for ${\mathcal{G}}(F/k)$, where $F/k$ is a finitely generated isotropic field extension.
\end{rem}

\begin{dfn}
	\normalfont
	We call $\uiso(X)$ the isotropic motivic fundamental group of $X$, and ${\mathcal{U}}(F/k)$ the isotropic motivic Galois group of the extension $F/k$.
\end{dfn}

\begin{rem}
	\normalfont
	Note that, for every finite extension $F/k$ of odd degree, Remark \ref{dah} implies that ${\mathcal{G}}(F/k) \cong \gm$, and so ${\mathcal{U}}(F/k)$ is the trivial group.
\end{rem}

The next result shows how the Koszulity conjecture for isotropic Milnor K-theory is related to a precise description of the category of cellular isotropic $\mbp$-modules for field extensions.

\begin{prop}
	Conjecture \ref{kos} is true if and only if there is an equivalence of stable $\infty$-categories
	$$\mbp_{F/k}-\Mod_{cell}^{\omega} \simeq {\mathcal{D}}^b({\mathrm{rep}}_{\Fi_2}({\mathcal{G}}(F/k))).$$
\end{prop}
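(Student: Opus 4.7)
The plan is to apply Theorem \ref{kosz} to $\cat = \mbp_{F/k}-\Mod_{cell}$. The preceding isotropic analogue of Theorem \ref{main} already establishes that this is an $\Fi_2$-linear stable $\infty$-category of Tate type satisfying the Beilinson-Soul\'e vanishing condition, so the only extra hypothesis to verify is the stronger ``on-diagonal'' vanishing $[\un(n),\Sigma^l\un(m)]_{\cat}\cong 0$ for all $l \neq m-n$.

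To check this, I would repeat the identification of Proposition \ref{rep} in the field-extension setting to rewrite
$$[\un(n),\Sigma^l\un(m)]_{\mbp_{F/k}-\Mod} \cong \mbp_{iso}^{l,m-n}(F),$$
and then invoke Proposition \ref{imk}, which computes $\mbp_{iso}^{p,q}(F) \cong \km^M_p(F/k)$ when $p=q$ and $\mbp_{iso}^{p,q}(F) \cong 0$ otherwise. This forces the desired vanishing for $l \neq m-n$ and simultaneously identifies the graded algebra appearing in Theorem \ref{kosz} as
$$A_n \coloneqq [\un,\Sigma^n\un(n)]_{\cat} \cong \mbp_{iso}^{n,n}(F) \cong \km^M_n(F/k),$$
with its natural ring structure corresponding to the product in isotropic Milnor K-theory.

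With these inputs in hand, Theorem \ref{kosz} gives an equivalence of stable $\infty$-categories
$$\mathcal{D}^b(\mbp_{F/k}-\Mod_{cell}^{\omega,\heartsuit}) \simeq \mbp_{F/k}-\Mod_{cell}^{\omega}$$
if and only if the graded algebra $\km^M_*(F/k)$ is a Koszul $\Fi_2$-algebra, which is exactly Conjecture \ref{kos}. To obtain the stated form of the equivalence, I would then use the Tannakian identification recorded in Remark \ref{uts}, namely $\mbp_{F/k}-\Mod_{cell}^{\omega,\heartsuit} \simeq {\mathrm rep}_{\Fi_2}({\mathcal G}(F/k))$, which was set up right after the isotropic analogue of Theorem \ref{main}. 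Substituting this into the Koszul equivalence yields exactly $\mbp_{F/k}-\Mod_{cell}^{\omega} \simeq \mathcal{D}^b({\mathrm rep}_{\Fi_2}({\mathcal G}(F/k)))$.

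There is no real obstacle here: once Proposition \ref{imk} is in place, the statement is essentially a packaging of Theorem \ref{kosz}. The only subtle point worth double-checking is that the algebra structure on $A_*$ induced by composition in $\cat$ matches the Milnor K-theory product under the identification of Proposition \ref{imk}; this follows from the fact that the isomorphism in Proposition \ref{imk} is realized by the isotropic $\mbp$-cohomology ring structure of $F$, which is compatible with composition of endomorphisms of the unit.
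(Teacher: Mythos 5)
Your proposal is correct and follows essentially the same route as the paper: verify the Tate-type and Beilinson--Soul\'e conditions (already established for $\mbp_{F/k}-\Mod_{cell}$), use Proposition \ref{imk} to get the off-diagonal vanishing $\mbp_{iso}^{l,m-n}(F)\cong 0$ for $l\neq m-n$ and the identification $A_n\cong \km^M_n(F/k)$, and then conclude by Theorem \ref{kosz} together with the Tannakian description of the heart. The extra care you take about matching ring structures and citing Remark \ref{uts} only makes explicit what the paper leaves implicit.
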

\begin{proof}
	We already know that $\mbp_{F/k}-\Mod_{cell}$ is a stable $\infty$-category of Tate type satisfying the Beilinson-Soul\'e vanishing condition. Moreover, Proposition \ref{imk} implies
	$$[\un(n),\Sigma^l\un(m)]_{\mbp_{F/k}-\Mod}\cong \mbp_{iso}^{l,m-n}(F) \cong 0$$
	for $l \neq m-n$, and
	$$[\un,\Sigma^n\un(n)]_{\mbp_{F/k}-\Mod}\cong \mbp_{iso}^{n,n}(F) \cong \km^M_n(F/k).$$
	
	Then, the result follows immediately from Theorem \ref{kosz}.
\end{proof}

\section{The isotropic motivic fundamental groups of $\Pone \setminus S$ and $\gmn$}

Let $S$ be a non-empty finite subset of $k$-rational points of $\Pone$ of cardinality $d+1$. In this section we compute the isotropic motivic fundamental group of $\Pone \setminus S$, and deduce from this computation the structure of the category of isotropic Tate motives over $\Pone \setminus S$.

We start by computing the isotropic $\mbp$-cohomology of $\Pone \setminus S$.

\begin{lem}\label{pi-s}
There is an isomorphism of $\Fi_2$-algebras
   $$\mbp_{iso}^{**}(\Pone \setminus S) \cong \Fi_2[x_1,\dots,x_d]/(x_ix_j: 1 \leq i,j \leq d)$$
   with generators $x_i \in \mbp_{iso}^{1,1}(\Pone \setminus S)$ for $1 \leq i \leq d$.
\end{lem}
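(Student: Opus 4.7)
The plan is to compute $\mbp^{**}_{iso}(\Pone \setminus S)$ directly from the Gersten-Quillen spectral sequence for $\mbp^{iso}_k$, which is available because $\mbp^{iso}_k$ is a Rost cycle module (as noted after Proposition \ref{imk}). Since $\dim(\Pone \setminus S) = 1$, only the columns $i = 0, 1$ contribute to the $E_1$-page
$$E_1^{i,j} = \bigoplus_{x \in (\Pone \setminus S)^{(i)}} \mbp^{j-i, q-i}_{iso}(\kappa(x)) \Longrightarrow \mbp^{j+i, q}_{iso}(\Pone \setminus S).$$
By Proposition \ref{imk}, each $\mbp^{**}_{iso}(\kappa(x))$ lives on the diagonal as $\km^M_*(\kappa(x)/k)$, so the computation of $\mbp^{p,q}_{iso}(\Pone \setminus S)$ reduces to the kernel and cokernel of the two-term complex
$$\km^M_q(k(t)/k) \xrightarrow{\partial} \bigoplus_{x \in (\Pone \setminus S)^{(1)}} \km^M_{q-1}(\kappa(x)/k).$$

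The first step is to show that this complex is trivial outside $q = 0, 1$. Both sides vanish for $q \geq 2$: the left term by Corollary \ref{pte}, and the right term by Proposition \ref{vanfe} applied to the residue fields of closed points, which are finite extensions of $k$. For $q = 0$ the right term is zero, leaving $\mbp^{0,0}_{iso}(\Pone \setminus S) = \Fi_2$ and $\mbp^{1,0}_{iso}(\Pone \setminus S) = 0$.

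The second step is the core computation at $q = 1$. Choosing a point $s_0 \in S$ and identifying $\mathbb{A}^1 \cong \Pone \setminus \{s_0\}$, Corollary \ref{pte} combined with $\km^M_1(k/k) = 0$ from Proposition \ref{vanfe} yields an isomorphism
$$\km^M_1(k(t)/k) \xrightarrow{\sim} \bigoplus_{x \in \mathbb{A}^1_{(0)}} \km^M_0(\kappa(x)/k)$$
induced by the sum of residues. Splitting the right-hand side according to whether a closed point lies in $S \cap \mathbb{A}^1 = \{s_1, \dots, s_d\}$ or in $(\Pone \setminus S)^{(1)}$ presents $\partial$ as the projection onto the second summand. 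Hence $\ker(\partial) \cong \bigoplus_{i=1}^d \km^M_0(\kappa(s_i)/k) \cong \Fi_2^d$, using that each $s_i$ is $k$-rational, while $\mathrm{coker}(\partial) = 0$. This gives $\mbp^{1,1}_{iso}(\Pone \setminus S) \cong \Fi_2^d$ and $\mbp^{2,1}_{iso}(\Pone \setminus S) = 0$.

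To conclude, I would choose a basis $x_1, \dots, x_d$ of $\mbp^{1,1}_{iso}(\Pone \setminus S)$ represented by the classes of the symbols $\{t - s_i\}$; all products $x_i x_j$ land in $\mbp^{2,2}_{iso}(\Pone \setminus S) = 0$, so the induced $\Fi_2$-algebra map from $\Fi_2[x_1, \dots, x_d]/(x_i x_j)$ is an isomorphism by dimension count. The main delicate point will be the identification of the $d_1$-differential of the Gersten-Quillen spectral sequence for $\mbp^{iso}_k$ with the Rost residue map for the cycle module $\km^M_*(\cdot/k)$, so that the homotopy sequence of Corollary \ref{pte} can be used to identify $\partial$ as above; once this compatibility is in place, the rest of the argument is entirely forced by the vanishing results of Section 5.
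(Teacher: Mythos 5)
Your computation is correct and reaches the same answer, but by a genuinely different route than the paper. The paper argues entirely inside $\mbp^{iso}_k-\Mod$: it applies the Gysin triangle for $S\subset\Pone$, uses $\Sigma^{\infty}_+\Pone\wedge\mbp^{iso}_k\simeq\mbp^{iso}_k\vee\Sigma^{2,1}\mbp^{iso}_k$ together with $\mbp^{**}_{iso}(k)\cong\Fi_2$ concentrated in bidegree $(0,0)$, and reads the groups off the resulting long exact sequence (the only computation being that the pushforward $\Fi_2^{d+1}\to\mbp^{2,1}_{iso}(\Pone)\cong\Fi_2$ is the sum map, hence surjective); the ring structure is then forced exactly as in your last step. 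You instead run the Gersten--Quillen spectral sequence for the curve $\Pone\setminus S$, identify the two-column complex with the Rost cycle complex of $\km^M_*(-/k)$ via Proposition \ref{imk}, and compute kernel and cokernel in weight $1$ by splitting the homotopy sequence of Corollary \ref{pte} at the points of $S$. This is consistent with how the paper itself uses the coniveau machinery in Proposition \ref{van} and Corollary 6.7, and it has the advantage of producing explicit generators, the symbols $\{t-s_i\}$, which the Gysin argument does not; the price is the compatibility you flag (coniveau $d_1$ versus Rost residues for $\underline{\pi}_0(\hz^{iso}_k)$, which is supplied by D\'eglise's identification of homotopy modules with cycle modules, cited after Proposition \ref{imk}), a point the Gysin argument avoids entirely. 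Two small touch-ups: for the vanishing of $\km^M_{q-1}(\kappa(x)/k)$ with $q\geq 2$ you should cite Proposition \ref{vanfe} only for odd-degree points and Remark \ref{isoan} for even-degree ones (the even-degree case gives zero already in degree $0$, which also harmlessly shrinks some summands in your weight-$1$ splitting); and the choice of basis $\{t-s_i\}$ requires the coordinate normalization $s_0=\infty$, which you have implicitly made.
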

\begin{proof}
	In $\mbp_k^{iso}-\Mod$ there is a Gysin triangle
	$$\Sigma^{\infty}_+(\Pone \setminus S) \wedge \mbp^{iso}_k \rightarrow \Sigma^{\infty}_+\Pone \wedge \mbp^{iso}_k \rightarrow \bigvee_{i=1}^{d+1} \Sigma^{2,1}\mbp^{iso}_k \rightarrow \Sigma^{1,0}(\Sigma^{\infty}_+(\Pone \setminus S) \wedge \mbp^{iso}_k)$$
	that induces, after applying the cohomological functor $[-,\Sigma^{**}\mbp^{iso}_k]_{\mbp_k^{iso}-\Mod}$, a long exact sequence of bigraded $\Fi_2$-vector spaces
	$$\dots \rightarrow \mbp_{iso}^{p-1,q}(\Pone \setminus S) \rightarrow  \bigoplus_{i=1}^{d+1}(\Sigma^{2,1}\Fi_2)^{p,q}\rightarrow (\Fi_2 \oplus \Sigma^{2,1}\Fi_2)^{p,q} \rightarrow  \mbp_{iso}^{p,q}(\Pone \setminus S) \rightarrow \dots.$$
	
	It follows that $\mbp_{iso}^{0,0}(\Pone \setminus S) \cong \Fi_2$, $\mbp_{iso}^{1,1}(\Pone \setminus S) \cong \bigoplus_{i=1}^d \Fi_2\cdot x_i$ and $\mbp_{iso}^{p,q}(\Pone \setminus S)\cong 0$ in all other degrees. The ring structure follows authomatically. This concludes the proof.
\end{proof}

Denote by $V$ the vector space $\bigoplus_{i=1}^d \Fi_2\cdot x_i$ and by $T^c(V)$ the associated tensor coalgebra (with the deconcatenation coproduct). The latter is a Hopf algebra with the shuffle algebra structure. Note that the shuffle product is commutative.

\begin{thm}\label{pones}
    There is an isomorphism of pro-algebraic groups over $\Fi_2$
    $$\uiso(\Pone \setminus S) \cong \spec(T^c(V))$$
    that induces a $t$-exact equivalence of stable $\infty$-categories
    $$\mbp^{iso}_{\Pone \setminus S}-\Mod_{cell}^{\omega} \simeq {\mathcal{D}}^b({\mathrm{rep}}_{\Fi_2}(\giso(\Pone \setminus S)))$$
where $\giso(\Pone \setminus S) \cong \uiso(\Pone \setminus S) \rtimes \gm$.
\end{thm}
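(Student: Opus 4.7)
The plan is to verify that $\mbp^{iso}_{\Pone\setminus S}-\Mod_{cell}^{\omega}$ satisfies the hypotheses of Theorem \ref{kosz}, and then recover the Tannaka group via Koszul duality applied to the diagonal ext algebra computed in Lemma \ref{pi-s}.

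First, by Proposition \ref{rep} and Lemma \ref{pi-s}, the graded algebra
$$A_n := [\un,\Sigma^n\un(n)]_{\mbp^{iso}_{\Pone\setminus S}-\Mod} \cong \mbp_{iso}^{n,n}(\Pone\setminus S)$$
is $\Fi_2$ for $n=0$, the $d$-dimensional vector space $V = \bigoplus_{i=1}^d \Fi_2\cdot x_i$ for $n=1$, and zero otherwise. Hence $A_*$ is the trivial quadratic algebra on $V$ with relation space $R = V \otimes V$, and by direct inspection
$$A_*^{\text{!`}} = C(V,R) = T^c(V),$$
since each intersection $\bigcap_{i+j=n-2} V^{\otimes i} \otimes R \otimes V^{\otimes j}$ collapses to $V^{\otimes n}$. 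A standard bar-resolution computation moreover gives $\mathrm{Tor}^{A_*}_n(\Fi_2,\Fi_2) \cong V^{\otimes n}$, so $H_{**}(A_*) \cong T^c(V) \cong A_*^{\text{!`}}$ and $A_*$ is Koszul.

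Next, Lemma \ref{pi-s} yields $\mbp_{iso}^{l, m-n}(\Pone\setminus S) = 0$ for $l \neq m-n$, so the supplementary vanishing hypothesis of Theorem \ref{kosz} is satisfied. Combined with Koszulity of $A_*$, Theorem \ref{kosz} then produces the equivalence
$$\mbp^{iso}_{\Pone\setminus S}-\Mod_{cell}^{\omega} \simeq \mathcal{D}^b(\mbp^{iso}_{\Pone\setminus S}-\Mod_{cell}^{\omega,\heartsuit}) \simeq \mathcal{D}^b({\mathrm rep}_{\Fi_2}(\giso(\Pone\setminus S))),$$
where the last identification uses Remark \ref{uts}. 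The $t$-exactness is built into the construction of Theorem \ref{kosz}, which sends $\un(n)$ to the corresponding Tate object in the heart.

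It remains to identify $\uiso(\Pone\setminus S)$ as a pro-algebraic group. Via Koszul duality, the Hopf algebra of functions on the unipotent Tannaka group is computed as the bar construction of the commutative algebra $A_*$; since $A_*$ is commutative and Koszul, this bar construction is quasi-isomorphic to $A_*^{\text{!`}} = T^c(V)$ endowed with the shuffle product, which is a commutative Hopf algebra with the deconcatenation coproduct. Thus $\mathcal{O}(\uiso(\Pone\setminus S)) \cong T^c(V)$ as Hopf algebras, giving $\uiso(\Pone\setminus S) \cong \spec(T^c(V))$; the semi-direct decomposition $\giso(\Pone\setminus S) \cong \uiso(\Pone\setminus S) \rtimes \gm$ then follows from the split short exact sequence in the previous section. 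The delicate step is passing from the coalgebra to the full Hopf algebra structure on $\mathcal{O}(\uiso(\Pone\setminus S))$: here one uses commutativity of $A_*$ to induce the shuffle product on the bar construction, with uniqueness of the resulting bialgebra structure guaranteed by Remark \ref{um}.
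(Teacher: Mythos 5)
Your proposal is correct and follows essentially the same route as the paper's proof: both use Lemma \ref{pi-s} to verify the extra vanishing hypothesis of Theorem \ref{kosz} and the Koszulity of the diagonal algebra $\Fi_2\oplus V$ (the paper asserts this standard fact, you verify it via $\mathrm{Tor}$), deduce the $t$-exact equivalence with ${\mathcal D}^b({\mathrm rep}_{\Fi_2}(\giso(\Pone\setminus S)))$, and then identify $\mathcal{O}(\uiso(\Pone\setminus S))$ with the Koszul dual coalgebra $T^c(V)$, upgrading the coalgebra isomorphism to a Hopf algebra isomorphism by the uniqueness statement of Remark \ref{um}. Your phrasing of the last step via the bar construction of $A_*$ (with shuffle product) is just a repackaging of the paper's argument that the Tannakian coalgebra $C$ has cohomology algebra $\mbp^{**}_{iso}(\Pone\setminus S)$ and is therefore its Koszul dual, so no genuinely different method is involved.
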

\begin{proof}
By Lemma \ref{pi-s}, we know that
$$[\un(n),\Sigma^l\un(m)]_{\mbp^{iso}_{\Pone \setminus S}-\Mod}\cong \mbp_{iso}^{l,m-n}(\Pone \setminus S) \cong 0$$
 for all $l\neq m-n$, and
 $$[\un,\Sigma^n\un(n)]_{\mbp^{iso}_{\Pone \setminus S}-\Mod}\cong \mbp_{iso}^{n,n}(\Pone \setminus S)$$
 is the $n$-graded component of the Koszul algebra $\Fi_2[x_1,\dots,x_d]/(x_ix_j: 1 \leq i,j \leq d)$. Hence, by Theorem \ref{kosz}, there is a $t$-exact equivalence of stable $\infty$-categories 
 $$\mbp^{iso}_{\Pone \setminus S}-\Mod_{cell}^{\omega} \simeq {\mathcal{D}}^b({\mathrm{rep}}_{\Fi_2}(\giso(\Pone \setminus S))).$$
 
 It only remains to identify the Tannaka group $\giso(\Pone \setminus S)$. For this, note that there is an equivalence of abelian categories
 $${\mathrm{rep}}_{\Fi_2}(\uiso(\Pone \setminus S)) \simeq {\mathrm{comod}}(C)$$
 where $C$ is the Hopf algebra such that $\uiso(\Pone \setminus S) \cong \spec(C)$, and the cohomology of $C$ is given by
 $$H^{**}(C) \cong \mbp^{**}_{iso}(\Pone \setminus S) \cong \Fi_2[x_1,\dots,x_d]/(x_ix_j: 1 \leq i,j \leq d).$$
 
Therefore, by Koszul duality, $C$ is the Koszul dual coalgebra of $\Fi_2[x_1,\dots,x_d]/(x_ix_j: 0 \leq i,j \leq d)$, i.e. the tensor coalgebra $T^{c}(V)$. By Remark \ref{um}, the isomorphism of coalgebras $C \cong T^c(V)$ upgrades to an isomorphism of Hopf algebras that is what we aimed to show. 
\end{proof}

Now we consider the analogous case of split tori. Again, we start by computing the isotropic $\mbp$-cohomology of $\gmn$.

\begin{lem}\label{gmn}
	There is an isomorphism of $\Fi_2$-algebras
	$$\mbp_{iso}^{**}(\gmn) \cong \Lambda_{\Fi_2}(x_1,\dots,x_n)$$
	with generators $x_i \in \mbp_{iso}^{1,1}(\gmn)$ for $1 \leq i \leq n$.
\end{lem}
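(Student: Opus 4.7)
The plan is to proceed analogously to the proof of Lemma \ref{pi-s}, but instead of invoking a Gysin triangle I would exploit the classical stable motivic splitting $\Sigma^\infty_+ \gm \simeq \un \vee \Sigma^{1,1}\un$ in $\SH(k)$ induced by the rational point $1 \in \gm$. Since $\Sigma^\infty_+$ sends products to smash products, iterating this yields
$$\Sigma^\infty_+ \gmn \simeq \bigvee_{I \subseteq \{1,\dots,n\}} \Sigma^{|I|,|I|}\un.$$
Smashing with $\mbp_k^{iso}$ and applying $[-,\Sigma^{**}\mbp_k^{iso}]_{\mbp_k^{iso}-\Mod}$ gives
$$\mbp_{iso}^{p,q}(\gmn) \cong \bigoplus_{I \subseteq \{1,\dots,n\}} \mbp_{iso}^{p-|I|,q-|I|}(k).$$
By Propositions \ref{imk} and \ref{vanfe}, the coefficient ring $\mbp_{iso}^{**}(k) \cong \km^M_*(k/k)$ is concentrated in bidegree $(0,0)$ where it equals $\Fi_2$. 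Hence $\mbp_{iso}^{p,q}(\gmn)$ vanishes off the diagonal, and in each bidegree $(p,p)$ it is $\binom{n}{p}$-dimensional over $\Fi_2$, with a basis given by one wedge summand $x_I$ for each $I$ with $|I|=p$. This already matches the bigraded dimensions of $\Lambda_{\Fi_2}(x_1,\dots,x_n)$.

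To upgrade to an algebra isomorphism, set $x_i \coloneqq \pi_i^*(x) \in \mbp_{iso}^{1,1}(\gmn)$, where $\pi_i: \gmn \to \gm$ is the $i$-th projection and $x \in \mbp_{iso}^{1,1}(\gm)$ is the generator produced by the $n=1$ computation. The same computation forces $\mbp_{iso}^{2,2}(\gm)=0$, whence $x_i^2=\pi_i^*(x^2)=0$; graded commutativity of the cup product on the $E_\infty$-ring $\mbp_{iso}^{**}(\gmn)$ combined with the characteristic $2$ of the coefficients gives $x_ix_j=x_jx_i$. These two relations produce a homomorphism of bigraded $\Fi_2$-algebras $\Lambda_{\Fi_2}(x_1,\dots,x_n) \to \mbp_{iso}^{**}(\gmn)$, and by the dimension count above it suffices to verify that the ordered products $x_{i_1}\cdots x_{i_p}$ with $i_1<\cdots<i_p$ are all nonzero, which identifies them with the wedge classes $x_I$.

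The main obstacle is precisely this final non-vanishing check: one has to see that the cup product does not degenerate as one leaves the generators. I would handle it by induction on $n$ via an explicit K\"unneth argument, using that the diagonal $\gm \to \gm \times \gm$ corresponds under the splitting $\Sigma^\infty_+\gm \simeq \un\vee\Sigma^{1,1}\un$ to the coproduct dual to the multiplication on $\mbp_{iso}^{**}(\gm)\cong \Lambda_{\Fi_2}(x)$. This compatibility, applied factor by factor to the decomposition $\Sigma^\infty_+\gmn \simeq (\Sigma^\infty_+\gm)^{\wedge n}$, forces the product $x_{i_1}\cdots x_{i_p}$ to pair nontrivially with the summand indexed by $I=\{i_1,\dots,i_p\}$, completing the identification with the exterior algebra.
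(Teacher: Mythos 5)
Your proposal is correct and follows essentially the same route as the paper: the paper likewise splits $\Sigma^{\infty}_+\gm\wedge\mbp_k^{iso}\cong\mbp_k^{iso}\vee\Sigma^{1,1}\mbp_k^{iso}$, takes the $n$-fold smash power, and reads off $\mbp_{iso}^{**}(\gmn)$, declaring the ring structure automatic. Your extra work on the multiplicative identification (defining $x_i=\pi_i^*(x)$, getting $x_i^2=0$ from the vanishing of $\mbp_{iso}^{2,2}(\gm)$, and checking nondegeneracy of products via the diagonal/K\"unneth compatibility) simply fills in the step the paper leaves implicit, and it is sound.
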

\begin{proof}
	In $\mbp^{iso}_k-\Mod$, we have an identification
	$$\Sigma^{\infty}_+\gm \wedge \mbp_k^{iso}\cong \mbp_k^{iso} \vee \Sigma^{1,1}\mbp_k^{iso},$$
	and so, by taking the smash $n$-power, we obtain
	$$\Sigma^{\infty}_+\gmn \wedge \mbp_k^{iso}\cong \mbp_k^{iso} \vee (\bigvee_{1 \leq i \leq n} \Sigma^{1,1}\mbp_k^{iso}) \vee (\bigvee_{1 \leq i<j \leq n} \Sigma^{2,2}\mbp_k^{iso}) \vee \dots \vee \Sigma^{n,n}\mbp_k^{iso}.$$
	
	By applying the cohomological functor $[-,\Sigma^{**}\mbp^{iso}_k]_{\mbp_k^{iso}-\Mod}$ to the previous equivalence, we get an isomorphism of $\Fi_2$-algebras
	$$\mbp_{iso}^{**}(\gmn) \cong \Lambda_{\Fi_2}(x_1,\dots,x_n)$$
	that concludes the proof.
\end{proof}

Let $W$ be the vector space $\bigoplus_{i=1}^n \Fi_2\cdot x_i$, and denote by $\Gamma(W)$ the divided power Hopf algebra of $W$, namely the graded algebra of invariants $\bigoplus_{m \in \N}(W^{\otimes m})^{\Sigma_m}$.

\begin{thm}\label{uisogmn}
	There is an isomorphism of pro-algebraic groups over $\Fi_2$
	$$\uiso(\gmn) \cong \spec(\Gamma(W))$$
	that induces a $t$-exact equivalence of stable $\infty$-categories
	$$\mbp^{iso}_{\gmn}-\Mod_{cell}^{\omega} \simeq {\mathcal{D}}^b({\mathrm{rep}}_{\Fi_2}(\giso(\gmn)))$$
	where $\giso(\gmn) \cong \uiso(\gmn) \rtimes \gm$.
\end{thm}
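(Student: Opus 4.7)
The plan is to mirror the proof of Theorem~\ref{pones}, replacing the trivial quadratic algebra $\Fi_2[x_1,\dots,x_d]/(x_ix_j)$ by the exterior algebra $\Lambda_{\Fi_2}(W)$ and its Koszul dual coalgebra $T^c(V)$ by the divided power coalgebra $\Gamma(W)$.

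First I would verify the hypotheses of Theorem~\ref{kosz} for $\cat = \mbp^{iso}_{\gmn}-\Mod_{cell}$. Combining Lemma~\ref{gmn} with Proposition~\ref{rep} gives
$$[\un(a),\Sigma^l\un(b)]_{\mbp^{iso}_{\gmn}-\Mod} \cong \mbp_{iso}^{l,b-a}(\gmn) \cong 0$$
whenever $l \neq b-a$, so the cohomology is concentrated on the diagonal. Combined with Theorem~\ref{main} applied to $X = \gmn$ (which already supplies the Tate-type and Beilinson--Soul\'e conditions), the only remaining input is the Koszulity of $\Lambda_{\Fi_2}(W)$. This is classical: the quadratic relations $x_i \otimes x_i$ and $x_i \otimes x_j + x_j \otimes x_i$ define a Koszul algebra whose Koszul dual coalgebra is the divided power coalgebra $\Gamma(W)$. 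Theorem~\ref{kosz} then yields a $t$-exact equivalence
$$\mbp^{iso}_{\gmn}-\Mod_{cell}^{\omega} \simeq {\mathcal D}^b({\mathrm rep}_{\Fi_2}(\giso(\gmn))).$$

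Next I would identify the Tannaka group, following the template of Theorem~\ref{pones}. Writing $\uiso(\gmn) \cong \spec(C)$ for a pro-unipotent Hopf algebra $C$, the equivalence ${\mathrm rep}_{\Fi_2}(\uiso(\gmn)) \simeq {\mathrm comod}(C)$ together with the identification $H^{**}(C) \cong \mbp_{iso}^{**}(\gmn) \cong \Lambda_{\Fi_2}(W)$ forces $C$ to be the Koszul dual coalgebra of the exterior algebra, namely $\Gamma(W)$. Since $\Gamma(W)$ is a subcoalgebra of $T^c(W)$, hence strictly graded, Remark~\ref{um} promotes this coalgebra isomorphism to one of Hopf algebras. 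The split short exact sequence of pro-algebraic groups constructed in Section~7 then gives the semidirect product decomposition $\giso(\gmn) \cong \uiso(\gmn) \rtimes \gm$.

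The only delicate point, specific to the prime $2$, is verifying that the Koszul dual coalgebra of $\Lambda_{\Fi_2}(W)$ is genuinely the divided power coalgebra $\Gamma(W)$, and not the coalgebra underlying a polynomial algebra; the two notions coincide in characteristic zero after linear dualisation, but diverge over $\Fi_2$. The check is a standard Koszul complex / bar construction computation, yielding $\mathrm{Tor}_*^{\Lambda_{\Fi_2}(W)}(\Fi_2,\Fi_2) \cong \Gamma(W)$ as Hopf algebras. Once this is in place, the remainder of the argument is a purely formal transcription of the proof given for $\Pone \setminus S$.
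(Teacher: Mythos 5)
Your proposal is correct and follows essentially the same route as the paper: verify the diagonal concentration and Koszulity of $\Lambda_{\Fi_2}(W)$ to apply Theorem~\ref{kosz}, then identify the Hopf algebra $C$ with $H^{**}(C)\cong\mbp_{iso}^{**}(\gmn)$ as the Koszul dual coalgebra $\Gamma(W)$ and upgrade via Remark~\ref{um}. The only cosmetic difference is that you identify this Koszul dual coalgebra directly (via the quadratic relations or $\mathrm{Tor}^{\Lambda_{\Fi_2}(W)}_*(\Fi_2,\Fi_2)\cong\Gamma(W)$), whereas the paper passes through the dual algebra $C^*\cong S(W^*)$ and then takes its linear dual; your explicit attention to the characteristic-$2$ distinction between $\Gamma(W)$ and a polynomial coalgebra is exactly the point the paper also makes.
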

\begin{proof}
	 Here, we follow the lines of the proof of Theorem \ref{pones}. In this case, there is an equivalence of abelian categories
	$${\mathrm{rep}}_{\Fi_2}(\uiso(\gmn)) \simeq {\mathrm{comod}}(C)$$
	where $C$ is the Hopf algebra satisfying $\uiso(\gmn) \cong \spec(C)$ and
	$$H^{**}(C) \cong \mbp^{**}_{iso}(\gmn) \cong \Lambda_{\Fi_2}(x_1,\dots,x_n)$$
	is a Koszul algebra. It follows by Koszul duality that $C$ is the Koszul dual coalgebra of $\Lambda_{\Fi_2}(x_1,\dots,x_n)$, and so $C^*$ is the Koszul dual algebra of $\Lambda_{\Fi_2}(x_1,\dots,x_n)$. This implies that $C^*\cong\Fi_2[x_1^*,\dots,x_n^*]\cong S(W^*)$ as $\Fi_2$-algebras, where $S(W^*)$ is the symmetric algebra of the dual of $W$. Note that $S(W^*)$ is a Hopf algebra, and its linear dual is the divided power Hopf algebra $\Gamma(W)$. Hence, we get an isomorphism of coalgebras $C \cong \Gamma(W)$ that upgrades to an isomorphism of Hopf algebras by Remark \ref{um}. This concludes the proof.
	\end{proof}

\begin{rem}
	\normalfont                                                     
	Theorem \ref{uisogmn} tells us that isotropic Tate motives over $\gmn$, modulo the grading provided by the Tate twist, are essentially given by modules over the polynomial algebra $\Fi_2[x_1^*,\dots,x_n^*]$.
\end{rem}

We conclude by explicitly mentioning the case of isotropic Tate motives over $\gm$.

\begin{cor}\label{gmga}
    There is an isomorphism of pro-algebraic groups over $\Fi_2$
    $$\uiso(\gm) \cong \spec(\Fi_2[y_1,y_2,\dots,y_{2^k},\dots]/(y_1^2,y_2^2,\dots,y_{2^k}^2,\dots))$$
    that induces a $t$-exact equivalence of stable $\infty$-categories
    $$\mbp^{iso}_{\gm}-\Mod_{cell}^{\omega} \simeq {\mathcal{D}}^b({\mathrm{rep}}_{\Fi_2}(\uiso(\gm) \rtimes \gm)).$$
\end{cor}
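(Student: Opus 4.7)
The plan is to derive this corollary as the specialization $n=1$ of Theorem \ref{uisogmn}, combined with the classical description of the divided power algebra on a one-dimensional $\Fi_2$-vector space. Both the $t$-exact equivalence of stable $\infty$-categories and the decomposition $\giso(\gm) \cong \uiso(\gm) \rtimes \gm$ follow immediately from Theorem \ref{uisogmn} applied with $n=1$, so the only remaining content is to rewrite $\Gamma(W)$ in the presentation stated in the corollary.

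With $W$ one-dimensional over $\Fi_2$, I would fix a basis vector $x$ and write $x^{(m)}$ for the canonical generator of the rank-one invariant subspace $(W^{\otimes m})^{\Sigma_m}$. The multiplication in $\Gamma(W)$ is governed by the standard divided power formula $x^{(m)} \cdot x^{(n)} = \binom{m+n}{m} x^{(m+n)}$. Applying Lucas's theorem mod $2$, one has $\binom{2^{k+1}}{2^k} \equiv 0$, so each $x^{(2^k)}$ squares to zero, while $\binom{2^a + 2^b}{2^a} \equiv 1$ for distinct $a, b$, so iteratively $\prod_i x^{(2^{a_i})} = x^{(m)}$ whenever $m = \sum_i 2^{a_i}$ is the binary expansion of $m$. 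Since the $x^{(m)}$ form an $\Fi_2$-basis of $\Gamma(W)$, setting $y_{2^k} := x^{(2^k)}$ for $k \geq 0$ produces an isomorphism of $\Fi_2$-algebras
$$\Gamma(W) \cong \Fi_2[y_1, y_2, \dots, y_{2^k}, \dots]/(y_1^2, y_2^2, \dots, y_{2^k}^2, \dots).$$
By Remark \ref{um}, this algebra isomorphism lifts uniquely to an isomorphism of Hopf algebras, and hence to the claimed isomorphism of pro-algebraic groups after passing to $\spec$.

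There is no genuine obstacle here: the substance of the corollary is entirely contained in Theorem \ref{uisogmn} and Lemma \ref{gmn}, and the $n=1$ case merely records the especially clean presentation of the divided power algebra on one generator over $\Fi_2$ as a tensor product of truncated polynomial rings indexed by the divided powers of $2$-power order.
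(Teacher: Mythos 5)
Your proposal is correct and takes essentially the same route as the paper: the paper's proof of Corollary \ref{gmga} simply specializes Theorem \ref{uisogmn} to $n=1$ and cites the standard identification of the divided power algebra on a one-dimensional $\Fi_2$-vector space with $\Fi_2[y_1,y_2,\dots,y_{2^k},\dots]/(y_1^2,y_2^2,\dots,y_{2^k}^2,\dots)$. Your Lucas-theorem computation merely spells out that classical fact which the paper leaves implicit, so there is no substantive difference.
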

\begin{proof}
    It follows as a special case of Theorem \ref{uisogmn} after noticing that the divided power algebra of a one-dimensional $\Fi_2$-vector space is isomorphic to $\Fi_2[y_1,y_2,\dots,y_{2^k},\dots]/(y_1^2,y_2^2,\dots,y_{2^k}^2,\dots)$.
\end{proof}

\section{Derived isotropic motivic fundamental groups}

In this last section, by using techniques developed by Spitzweck in \cite{S}, we provide a derived approach to isotropic fundamental groups. This will ensure an identification of the category of isotropic Tate motives with the category of representations of an affine derived group scheme. The 0-truncation of this derived group gives back the isotropic fundamental group guaranteed by Theorem \ref{main}.

First, fix a smooth variety $X$ over $k$. Since the functor $\mbp^{iso}_k-\Mod_{cell} \hookrightarrow \mbp^{iso}_k-\Mod$ preserves colimits it has a right adjoint $c:\mbp^{iso}_k-\Mod \rightarrow \mbp^{iso}_k-\Mod_{cell}$ that we call cellularization. Then, the adjunction
$$p_X^*: \mbp^{iso}_k-\Mod \leftrightarrows \mbp^{iso}_X-\Mod:p_{X,*}$$ 
induces, in turn, an adjunction on the subcategories of Tate objects
$$p_X^*: \mbp^{iso}_k-\Mod_{cell} \leftrightarrows \mbp^{iso}_X-\Mod_{cell}:\overline{p}_{X,*}$$
where $\overline{p}_{X,*}=c \circ p_{X,*}$.

Since $p_X^*$ is symmetric monoidal, its right adjoint $\overline{p}_{X,*}$ is automatically lax symmetric monoidal. Therefore, the previous adjunction restricts to commutative algebra objects
$$p_X^*: {\mathrm{CAlg}}(\mbp^{iso}_k-\Mod_{cell}) \leftrightarrows {\mathrm{CAlg}}(\mbp^{iso}_X-\Mod_{cell}):\overline{p}_{X,*}$$
by \cite[Proposition 5.22]{MNN}.

\begin{dfn}
	\normalfont
Denote by $A^{iso}_X$ the $E_{\infty}$-algebra in $\mbp^{iso}_k-\Mod_{cell} \simeq \Fi_2-\Vs_{**}$ defined as $\overline{p}_{X,*}p_X^*\un \cong \overline{p}_{X,*}\un$. In other words, $A^{iso}_X$ is the cellularization of the cohomological isotropic motive of $X$.
\end{dfn}

\begin{prop}\label{bbl}
	For every smooth isotropic variety $X$ over $k$, there is an equivalence of symmetric monoidal stable $\infty$-categories 
	$$\mbp^{iso}_X-\Mod_{cell} \simeq \Mod_{\Fi_2-\Vs_{**}}(A^{iso}_X).$$
\end{prop}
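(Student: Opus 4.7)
The plan is to apply Lurie's monadic Barr–Beck theorem to the adjunction
$$p_X^*: \mbp^{iso}_k-\Mod_{cell} \leftrightarrows \mbp^{iso}_X-\Mod_{cell}: \overline{p}_{X,*}$$
and then identify the resulting monad on $\mbp^{iso}_k-\Mod_{cell} \simeq \Fi_2-\Vs_{**}$ with the endofunctor $- \otimes A^{iso}_X$. The symmetric monoidal enhancement will then follow from the symmetric monoidal variant of Barr–Beck, using that $p_X^*$ is symmetric monoidal.

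First I would verify the hypotheses of Barr–Beck. For conservativity of $\overline{p}_{X,*}$, note that $\mbp^{iso}_X-\Mod_{cell}$ is compactly generated by $p_X^*\Sigma^{p,q}\un$ for $p,q \in \Z$, and for any morphism $f$ in the target the adjunction gives an isomorphism
$$[\Sigma^{p,q}\un,\overline{p}_{X,*}f]_{\mbp^{iso}_k-\Mod_{cell}} \cong [\Sigma^{p,q}\un,f]_{\mbp^{iso}_X-\Mod_{cell}},$$
so $\overline{p}_{X,*}f$ being an equivalence forces $f$ to be so on all generators. Preservation of $\overline{p}_{X,*}$-split geometric realizations reduces, in the stable presentable setting, to the facts that the Tate generators are compact and that $p_{X,*}:\SH(X)\to\SH(k)$ preserves the relevant split colimits, the cellularization functor $c$ being a colimit-preserving left adjoint (from the inclusion $\mbp^{iso}_k-\Mod_{cell}\hookrightarrow \mbp^{iso}_k-\Mod$ having a right adjoint).

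Next I would identify the monad $T \coloneqq \overline{p}_{X,*}p_X^*$. The projection formula in $\SH$ gives $p_{X,*}(p_X^*M\otimes N)\simeq M\otimes p_{X,*}N$; specializing to $N=\un$ yields $p_{X,*}p_X^*M\simeq M\otimes p_{X,*}\un$ in $\mbp^{iso}_k-\Mod$. Cellularizing and using that, for $M$ cellular, tensoring with $M$ commutes with the cellularization $c$ (verified on the compact generators $\Sigma^{p,q}\un$ where both sides give $\Sigma^{p,q}A^{iso}_X$, and extended via colimits since both sides are colimit-preserving in $M$), we obtain
$$T(M) \simeq c(M\otimes p_{X,*}\un) \simeq M\otimes c(p_{X,*}\un) = M\otimes A^{iso}_X,$$
and this identification is compatible with the monad structure because $A^{iso}_X$ inherits its $E_\infty$-algebra structure precisely from $T$.

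Putting these together, Lurie's theorem yields an equivalence
$$\mbp^{iso}_X-\Mod_{cell}\simeq \Mod_T(\mbp^{iso}_k-\Mod_{cell})\simeq \Mod_{\Fi_2-\Vs_{**}}(A^{iso}_X),$$
where in the last step I use the identification $\mbp^{iso}_k-\Mod_{cell}\simeq \Fi_2-\Vs_{**}$ from \cite[Theorem 7.4]{T}. To upgrade this to a symmetric monoidal equivalence, apply the symmetric monoidal version of Barr–Beck: the adjunction $(p_X^*,\overline{p}_{X,*})$ is naturally lax symmetric monoidal with $p_X^*$ strongly symmetric monoidal, so the comparison inherits a symmetric monoidal structure.

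The main obstacle I expect is technical rather than conceptual: carefully establishing the interaction between cellularization and the tensor product (so that the monad is genuinely $-\otimes A^{iso}_X$ and not merely pointwise equivalent to it) and verifying the Barr–Beck colimit hypothesis in the stable $\infty$-categorical framework. The isotropy assumption on $X$ enters only indirectly, ensuring that $\mbp^{iso}_X$ is non-trivial and the cellular generators behave as expected.
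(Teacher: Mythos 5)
Your overall architecture is the same as the paper's: check conservativity and colimit-preservation of $\overline{p}_{X,*}$ on the cellular categories, identify the monad via a projection formula, and invoke a (symmetric monoidal) Barr--Beck theorem; the paper does this in one stroke by citing the monoidal Barr--Beck--Lurie result of Mathew--Naumann--Noel. The one genuine gap is the step where you assert that ``the projection formula in $\SH$ gives $p_{X,*}(p_X^*M\otimes N)\simeq M\otimes p_{X,*}N$'' for arbitrary $M$. The projection formula is an axiom for $p_{\#}$ (and for $p_{!}$), not for $p_{*}$: for a smooth non-proper $X$ the natural map $M\otimes p_{X,*}N\to p_{X,*}(p_X^*M\otimes N)$ is an equivalence only under hypotheses such as dualizability, and your specialization $p_{X,*}p_X^*M\simeq M\otimes p_{X,*}\un$ amounts to the map $M\otimes \underline{\mathrm{Hom}}(\Sigma^{\infty}_+X,\un)\to \underline{\mathrm{Hom}}(\Sigma^{\infty}_+X,M)$ being an equivalence, which fails for general $M$. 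This is exactly where the paper's proof does its work: it establishes the projection formula for the cellular adjunction directly, by writing the cellular input as a filtered colimit of finite sums of Tate twists (which are dualizable), using the identification $\mbp^{iso}_k-\Mod_{cell}\simeq \Fi_2-\Vs_{**}$, and using that $\overline{p}_{X,*}$ preserves colimits because $p_X^*$ preserves the compact generators.

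The gap is fixable by your own technique: you only need the formula for $M$ in $\mbp^{iso}_k-\Mod_{cell}$, and the ``verify on the compact generators $\Sigma^{p,q}\un$, then extend along colimits'' argument you apply to commute $c$ past $-\otimes M$ applies verbatim to the projection formula itself; doing so also makes the detour through the uncellularized $p_{X,*}\un$ unnecessary, since one can argue entirely with $\overline{p}_{X,*}$, as the paper does. Two smaller points: the cellularization $c$ is the \emph{right} adjoint of the inclusion, not a ``colimit-preserving left adjoint''; it preserves colimits because the inclusion preserves compact objects. And the compatibility of $T\simeq -\otimes A^{iso}_X$ with the monad and $E_{\infty}$-structures, which you wave at, is precisely what the symmetric monoidal Barr--Beck statement (MNN, Proposition 5.29) packages, so citing it from the start, as the paper does, is cleaner than plain Barr--Beck plus an a posteriori monoidal upgrade.
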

\begin{proof}
The functor $p_X^*: \mbp^{iso}_k-\Mod_{cell} \rightarrow \mbp^{iso}_X-\Mod_{cell}$ is symmetric monoidal and preserves compact objects. Hence, its right adjoint $\overline{p}_{X,*}$ commutes with colimits, since the categories in question are compactly generated.

Now, take any object $M$ in $\mbp^{iso}_X-\Mod_{cell}$ and $N$ in $\mbp^{iso}_k-\Mod_{cell} \simeq \Fi_2-\Vs_{**}$. Then, $N$ is a filtered colimit of finite dimensional $\Fi_2$-vector spaces $N_{\alpha} \cong \bigoplus_{i=1}^{n_{\alpha}} \Sigma^{p_{\alpha,i},q_{\alpha,i}}\Fi_2$ from which it follows that
$$p^*_X(N_{\alpha}) \cong \bigoplus_{i=1}^{n_{\alpha}} \Sigma^{p_{\alpha,i},q_{\alpha,i}}\un \cong \bigoplus_{i=1}^{n_{\alpha}} \Sigma^{p_{\alpha,i},q_{\alpha,i}}\mbp^{iso}_X,$$
and so we get a chain of equivalences
\begin{align*}
	\overline{p}_{X,*}(M \otimes p_X^*(N))  &\cong \overline{p}_{X,*}(M \otimes p_X^*({\mathrm{colim}}_{\alpha} N_{\alpha})) \\
	&\cong {\mathrm{colim}}_{\alpha}\overline{p}_{X,*}(M \otimes p^*_X(N_{\alpha})) \\
	&\cong {\mathrm{colim}}_{\alpha}\bigoplus_{i=1}^{n_{\alpha}} \Sigma^{p_{\alpha,i},q_{\alpha,i}}\overline{p}_{X,*}(M)\\
	&\cong \overline{p}_{X,*}(M) \otimes N.
	\end{align*}

This means that the adjunction $(p_X^*,\overline{p}_{X,*})$ satisfies the projection formula. Moreover, the right adjoint $\overline{p}_{X,*}$ is conservative. In fact, $\overline{p}_{X,*}(M) \cong 0$ implies
$$[\Sigma^{m,n}\un,M]_{\mbp^{iso}_X-\Mod_{cell}} \cong [\Sigma^{m,n}p_X^*\un,M]_{\mbp^{iso}_X-\Mod_{cell}} \cong [\Sigma^{m,n}\un,\overline{p}_{X,*}(M)]_{\mbp^{iso}_k-\Mod_{cell}} \cong 0$$
for all $m,n \in \Z$, and so $M \cong 0$.
We conclude by applying the symmetric monoidal version of Barr-Beck-Lurie \cite[Proposition 5.29]{MNN}.
	\end{proof}

\begin{dfn}
	\normalfont
We say that an $E_{\infty}$-algebra $A$ in $\Fi_2-\Vs_{**}$ is coconnective if the unit $\Fi_2 \rightarrow A_{0,*}$ is an equivalence, and the complex $A_{n,*}$ is trivial for all $n >0$.

We say that $A$ is of Tate-type if the unit $\Fi_2 \rightarrow A_{*,0}$ is an equivalence, and the complex $A_{*,n}$ is trivial for all $n > 0$. It is moreover called bounded if for every $n < 0$ the complex $A_{*,n}$ is cohomologically bounded from below.
\end{dfn}

\begin{lem}\label{btt}
For every smooth isotropic variety $X$ over $k$, the $E_{\infty}$-algebra $A^{iso}_X$ is coconnective of bounded Tate-type.
\end{lem}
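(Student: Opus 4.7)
The plan is to extract all three properties directly from Proposition \ref{van}, once the bigraded structure of $A^{iso}_X$ has been identified with isotropic $\mbp$-cohomology. First I would compute $[\Sigma^{p,q}\un, A^{iso}_X]$ in the cellular category. Using the adjunction $(p_X^*, \overline{p}_{X,*})$ restricted to cellular subcategories (well-defined because $p_X^*$ preserves Tate objects and the cellularization $c$ provides the right adjoint) together with Proposition \ref{rep}, one obtains natural isomorphisms
$$[\Sigma^{p,q}\un, A^{iso}_X]_{\mbp^{iso}_k-\Mod_{cell}} \cong [\Sigma^{p,q} p_X^* \un, \un]_{\mbp^{iso}_X-\Mod_{cell}} \cong [\un, \Sigma^{-p,-q}\un]_{\mbp^{iso}_X-\Mod} \cong \mbp^{-p,-q}_{iso}(X).$$
Hence, under the equivalence $\mbp^{iso}_k-\Mod_{cell} \simeq \Fi_2-\Vs_{**}$, the bigraded pieces of $A^{iso}_X$ are, up to a sign flip of the indices, just the isotropic $\mbp$-cohomology of $X$.

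The second step is to read off the vanishing conditions from Proposition \ref{van}, which says that $\mbp^{a,b}_{iso}(X)$ vanishes outside the region $b \leq a \leq \min(2b, b + d)$. Under the sign flip $a = -p$, $b = -q$ this translates into: the only non-vanishing bidegrees $(p,q)$ for $A^{iso}_X$ sit inside $\max(2q, q - d) \leq p \leq q$, a set that meets $\{p \geq 0\} \cup \{q \geq 0\}$ only at the origin, thanks to the strict inequalities $2q > q$ for $q > 0$ and $p > q$ for $p > 0, q \leq 0$. This simultaneously gives the coconnectivity vanishing $A^{iso,n,\ast}_X \simeq 0$ for $n > 0$ and the Tate-type vanishing $A^{iso,\ast,n}_X \simeq 0$ for $n > 0$. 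At the origin, Proposition \ref{van} together with the isotropy of $X$ gives $\mbp^{0,0}_{iso}(X) \cong \km^M_0(\kappa(X)/k) \cong \Fi_2$, so the unit map realises the two required equivalences $\Fi_2 \simeq A^{iso,0,\ast}_X$ and $\Fi_2 \simeq A^{iso,\ast,0}_X$. For boundedness, fixing any $n < 0$ restricts the other index to the finite interval $[\max(2n, n - d), n]$, so each slice of $A^{iso}_X$ is automatically cohomologically bounded from below.

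The main obstacle will be notational: pinning down the bigrading conventions of $\Fi_2-\Vs_{**}$ used in Spitzweck's formalism, with one index playing the role of an external weight and the other of an internal complex degree, so that the phrases ``the complex $A^{n,\ast}$ is trivial'' and ``the complex $A^{\ast,n}$ is bounded from below'' translate unambiguously into the vanishing statements of Proposition \ref{van}. Once this dictionary is fixed, the lemma reduces to a bookkeeping consequence of the results already collected in Sections 4 and 5.
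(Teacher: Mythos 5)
Your proposal is correct and follows essentially the same route as the paper: identify $(A^{iso}_X)^{p,q}$ with $\mbp^{-p,-q}_{iso}(X)$ via the adjunction and Proposition \ref{rep}, then read off coconnectivity, the Tate-type condition, and boundedness from the vanishing region of Proposition \ref{van} together with $\mbp^{0,0}_{iso}(X)\cong\Fi_2$. The paper's proof is just a terser version of your index bookkeeping, so there is nothing to change.
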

\begin{proof}
	By definition of $A^{iso}_X$ and Proposition \ref{rep}, we know that
	$$(A^{iso}_X)_{p,q} \coloneqq [\Sigma^{p,q}\un,\overline{p}_{X,*}\un]_{\mbp^{iso}_k-\Mod_{cell}} \cong [\Sigma^{p,q}\un,\un]_{\mbp^{iso}_X-\Mod_{cell}} \cong \mbp^{-p,-q}_{iso}(X).$$
	
	Then, the vanishing of isotropic $\mbp$-cohomology groups provided by Proposition \ref{van} and the isomorphism $\mbp_{iso}^{0,0}(X) \cong \Fi_2$ imply that $A^{iso}_X$ is coconnective of bounded Tate-type. 
	\end{proof}

\begin{dfn}
	\normalfont
	Denote by $B^{iso,\bullet}_X$ the \v{C}ech conerve of the augmentation $A^{iso}_X \rightarrow \Fi_2$, that is, the cosimplicial $A^{iso}_X$-algebra whose $n$-th component is given by
	$$\Fi_2^{\otimes_{A^{iso}_X}(n+1)} \coloneqq \Fi_2 \otimes_{A^{iso}_X} \dots \otimes_{A^{iso}_X} \Fi_2$$
	(tensor product of $n+1$ factors).
\end{dfn}

\begin{rem}
	\normalfont
	Recall from \cite[Section 5]{S} that $B^{iso,\bullet}_X$ is an affine derived group scheme. We denote by $\perf(B^{iso,\bullet}_X)$ the category of perfect representations of $B^{iso,\bullet}_X$ defined in \cite[Section 3]{S}. This is essentially the limit in $\widehat{\mathrm{Cat}}_{\infty}$ given by
	$$\perf(B^{iso,\bullet}_X):=\lim_n \perf(B^{iso,n}_X),$$
	and consists of those cosimplicial modules $M^{\bullet}$ such that $M^n$ belongs to $\perf(B^{iso,n}_X)$ and, for any map $[n] \rightarrow [m]$ in the simplex category, the corresponding morphism $M^n \otimes_{B^{iso,n}_X}B^{iso,m}_X \rightarrow M^m$ is an isomorphism.
	
	Besides, note that, as a cosimplicial algebra, $B^{iso,\bullet}_X$ is coconnective by \cite[Corollary 4.1.11]{Lu1}.
\end{rem}

\begin{thm}\label{der}
For every smooth isotropic variety $X$ over $k$, there is a symmetric monoidal equivalence
$$\mbp_X^{iso}-\Mod_{cell}^{\omega} \simeq \perf(B^{iso,\bullet}_X).$$ 
\end{thm}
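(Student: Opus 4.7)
The plan is to leverage Proposition \ref{bbl}, restricted to compact objects, in order to reduce the desired equivalence to a question about perfect modules over the augmented $E_{\infty}$-algebra $A^{iso}_X$ inside $\Fi_2-\Vs_{**}$. Since that equivalence is symmetric monoidal and $p_X^*$ preserves compact objects, passing to compact objects yields a symmetric monoidal equivalence
$$\mbp_X^{iso}-\Mod_{cell}^{\omega} \simeq \Mod_{\Fi_2-\Vs_{**}}(A^{iso}_X)^{\omega},$$
so the theorem reduces to identifying the right-hand side with $\perf(B^{iso,\bullet}_X)$.

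To construct the comparison, I would consider the augmentation $\epsilon: A^{iso}_X \to \Fi_2$ in $\Fi_2-\Vs_{**}$, coming from the coconnectivity isomorphism $(A^{iso}_X)^{0,*} \cong \Fi_2$, whose \v{C}ech conerve is by definition the cosimplicial $E_{\infty}$-algebra $B^{iso,\bullet}_X$. Sending a perfect $A^{iso}_X$-module $M$ to the cosimplicial object $n \mapsto M \otimes_{A^{iso}_X} B^{iso,n}_X$ then defines a symmetric monoidal functor
$$\Phi: \Mod_{\Fi_2-\Vs_{**}}(A^{iso}_X)^{\omega} \longrightarrow \perf(B^{iso,\bullet}_X),$$
whose symmetric monoidality is automatic since each base change $- \otimes_{A^{iso}_X} B^{iso,n}_X$ is symmetric monoidal and the defining limit of $\perf(B^{iso,\bullet}_X)$ is formed in symmetric monoidal stable $\infty$-categories.

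The core step is to prove that $\Phi$ is an equivalence. This is precisely the content of Spitzweck's descent theorem for derived affine group schemes from \cite[Section 5]{S}: its hypotheses, namely coconnectivity and bounded Tate-type of the augmented $E_{\infty}$-algebra, are exactly what is verified for $A^{iso}_X$ in Lemma \ref{btt}. Concretely, one recovers $M$ from $\Phi(M)$ as the totalization of the cobar complex along $\epsilon$, while an inverse assigns to a descent datum $N^{\bullet}$ its equalizer. The base change $-\otimes_{A^{iso}_X}\Fi_2$ is conservative and symmetric monoidal on compact objects, and the symmetric monoidal Barr-Beck-Lurie formalism then upgrades this adjunction to an equivalence between $\Mod_{\Fi_2-\Vs_{**}}(A^{iso}_X)^{\omega}$ and the category of cosimplicial descent data on $B^{iso,\bullet}_X$, which is $\perf(B^{iso,\bullet}_X)$ by definition.

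The main obstacle will be the convergence of this cobar totalization on perfect modules. In the ungraded setting, coconnectivity alone is typically too weak for such totalizations to be effective on compact objects; here the bigrading on $\Fi_2-\Vs_{**}$ is essential, as the bounded Tate-type condition forces the bidegrees of $B^{iso,n}_X$ to lie in controlled ranges that truncate the cosimplicial diagram in each fixed bidegree to a finite shape, producing genuinely finite totalizations. Verifying that this convergence is compatible with compactness, and that the ensuing descent functor lands in $\perf(B^{iso,\bullet}_X)$ rather than a larger ind-completion, is exactly what the structural hypotheses of Lemma \ref{btt} are designed to ensure.
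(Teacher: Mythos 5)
Your proposal is correct and follows essentially the same route as the paper: restrict the equivalence of Proposition \ref{bbl} to compact objects to get $\mbp_X^{iso}-\Mod_{cell}^{\omega} \simeq \perf(A^{iso}_X)$, and then identify $\perf(A^{iso}_X)$ with $\perf(B^{iso,\bullet}_X)$ by Spitzweck's theorem (cited in the paper as \cite[Theorem 7.4]{S}), whose coconnectivity and bounded Tate-type hypotheses are exactly those supplied by Lemma \ref{btt}. Your additional discussion of the cobar totalization and its convergence is internal to Spitzweck's result rather than an extra step you need to verify.
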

\begin{proof}
	By restricting the equivalence in Proposition \ref{bbl} to the subcategories of compact objects we get an equivalence
	$$\mbp_X^{iso}-\Mod_{cell}^{\omega} \simeq \perf(A^{iso}_X).$$
	
	On the other hand, it follows from Lemma \ref{btt} and \cite[Theorem 7.4]{S} that
	$$\perf(A^{iso}_X) \simeq \perf(B^{iso,\bullet}_X),$$
    which concludes the proof.
	\end{proof}

\begin{rem}
	\normalfont
	By \cite[Lemma C.2.4.3]{Lu}, the $t$-structure on $\mbp_X^{iso}-\Mod_{cell}^{\omega}$ provided by Theorem \ref{main} induces a $t$-structure on $\mbp_X^{iso}-\Mod_{cell} \simeq {\mathrm{Ind}(\mbp_X^{iso}-\Mod_{cell}^{\omega})}$.  Via the equivalence in Proposition \ref{bbl}, the latter corresponds to the canonical $t$-structure (in cohomological notation) on $\Mod_{\Fi_2-\Vs_{**}}(A^{iso}_X)$, whose non-positive part $\Mod_{\Fi_2-\Vs_{**}}(A^{iso}_X)^{\leq 0}$ is generated by $A^{iso}_X(n)$ for $n \in \Z$ under colimits and extensions, and whose non-negative part $\Mod_{\Fi_2-\Vs_{**}}(A^{iso}_X)^{\geq 0}$ consists of modules $M$ such that $\pi_iM \cong 0$ for all $i >0$. Hence, we get equivalences of tensor abelian categories
	$$\mbp_X^{iso}-\Mod_{cell}^{\omega,\heartsuit} \simeq \perf(A_X^{iso})^{\heartsuit} \simeq \perf(B_X^{iso,\bullet})^{\heartsuit}.$$
	
	Since $\perf(B_X^{iso,\bullet})^{\heartsuit}$ consists of cosimplicial modules $M^{\bullet}$ such that $M^0$ is an object of $\perf(\Fi_2)$ concentrated in degree 0, namely a finite-dimensional $\Fi_2$-vector space, we actually have equivalences
	$$\perf(B_X^{iso,\bullet})^{\heartsuit}\simeq {\mathrm{gr.comod}}(H^0(B_X^{iso,1})) \simeq {\mathrm{gr.rep}}_{\Fi_2}({\mathcal{U}}_{A^{iso}_{X}}),$$
	where $H^0(B_X^{iso,1})$ is the 0-truncation of the coconnective $E_{\infty}$-Hopf algebra $B_X^{iso,1}=\Fi_2 \otimes_{A^{iso}_X}  \Fi_2$, ${\mathcal{U}}_{A^{iso}_{X}} \coloneqq \spec(H^0(B_X^{iso,1}))$ is the corresponding affine group scheme over $\Fi_2$, and the extra-grading comes from the motivic weight. At the end, we get an identification of pro-algebraic groups
	$$\uiso(X) \cong {\mathcal{U}}_{A^{iso}_{X}}$$
	underlying the connection between the classical and the derived approach.
\end{rem}

\begin{rem}
	\normalfont
	As a final comment, we point out that the results in this section also work verbatim for any isotropic finitely generated field extension $F/k$. Thus, we get a coconnective $E_{\infty}$-algebra $A_{F/k}$ of bounded Tate-type, with the associated affine derived group scheme $B^{\bullet}_{F/k}$, such that
	$$\mbp_{F/k}-\Mod_{cell} \simeq \Mod_{\Fi_2-\Vs_{**}}(A_{F/k})$$
	and
	$$\mbp_{F/k}-\Mod_{cell}^{\omega} \simeq \perf(B^{\bullet}_{F/k}).$$ 
	
	In this context, Conjecture \ref{kos} is equivalent to saying that the affine derived group scheme $B^{\bullet}_{F/k}$ is 0-truncated.
	\end{rem}

\footnotesize{

}

\noindent {\scshape Dipartimento di Matematica e Applicazioni, Universit\`a degli Studi di Milano-Bicocca}\\
fabio.tanania@gmail.com

\end{document}